\let\oldtocsection=\tocsection
\let\oldtocsubsection=\tocsubsection 
\let\oldtocsubsubsection=\tocsubsubsection
\renewcommand{\tocsection}[2]{\vspace{0.5em}\hspace{0em}\oldtocsection{#1}{#2}}
\renewcommand{\tocsubsection}[2]{\vspace{0.5em}\hspace{1em}\oldtocsubsection{#1}{#2}}
\renewcommand{\tocsubsubsection}[2]{\vspace{0.5em}\hspace{2em}\oldtocsubsubsection{#1}{#2}}
\patchcmd{\@settitle}{center}{flushleft}{}{}
\patchcmd{\@settitle}{center}{flushleft}{}{}
\patchcmd{\@setauthors}{\centering}{\raggedright}{}{}
\patchcmd{\abstract}{3pc}{0pt}{}{} % remove indentation
\renewcommand*\@maketitle{%
  \normalfont\normalsize
  \@adminfootnotes
  \@mkboth{\@nx\shortauthors}{\@nx\shorttitle}%
  \global\topskip42\p@\relax % 5.5pc   "   "   "     "     "
  \@settitle
  \ifx\@empty\authors \else \@setauthors \fi
  \ifx\@empty\@date \else {\vskip 1em \vtop{\centering\large\@date\@@par}}\fi% MY CHANGE
  \ifx\@empty\@dedicatory
  \else
    \baselineskip18\p@
    \vtop{\centering{\footnotesize\itshape\@dedicatory\@@par}%
      \global\dimen@i\prevdepth}\prevdepth\dimen@i
  \fi
  \@setabstract
  \normalsize
  \if@titlepage
    \newpage
  \else
    \dimen@34\p@ \advance\dimen@-\baselineskip
    \vskip\dimen@\relax
  \fi
} % end \@maketitle
\renewcommand*\@adminfootnotes{%
  \let\@makefnmark\relax  \let\@thefnmark\relax
%  \ifx\@empty\@date\else \@footnotetext{\@setdate}\fi% MY CHANGE
  \ifx\@empty\@subjclass\else \@footnotetext{\@setsubjclass}\fi
  \ifx\@empty\@keywords\else \@footnotetext{\@setkeywords}\fi
  \ifx\@empty\thankses\else \@footnotetext{%
    \def\par{\let\par\@par}\@setthanks}%
  \fi
}
\newcommand{\isEquivTo}[1]{\underset{#1}{\sim}}
\def\R{\mathbb R}
\def\z{\mathbb Z}
\def\HH{\mathcal H}
\def\AA{\mathcal A}
\def\la {{\lambda}}
\newcommand {\nc}   {\newcommand}
\nc {\be}   {\begin{equation}} \nc {\ee}   {\end{equation}} \nc
\nc {\eeq}  {\end{eqnarray}} \nc {\beqs}
\nc {\eeqs} {\end{eqnarray*}}
\def\edc{\end{document}}
\providecommand{\abs}[1]{\lvert#1\rvert}%absolute value
\newtheorem{theoreme}{Theorem}[section]
\newtheorem{pro}[theoreme]{Proposition}
\newtheorem{lemma}[theoreme]{Lemma}
\newtheorem{definition}[theoreme]{Definition}
\newtheorem{rem}[theoreme]{Remark}
\def\nline{\\ \noalign{\medskip}}
\numberwithin{equation}{section}
\renewenvironment{proof}{{\bfseries \noindent Proof.}}{\demo}
\newcommand\xqed[1]{%
  \leavevmode\unskip\penalty9999 \hbox{}\nobreak\hfill
  \quad\hbox{#1}}
\newcommand\demo{\xqed{$\square$}}
 \renewenvironment{proof}{{\bfseries \noindent Proof.}}{\demo}
\title[{\fontsize{6.5}{8}\selectfont Indirect Stability of a multidimensional coupled wave equations with one locally boundary fractional damping}]{Indirect Stability of a multidimensional coupled wave equations  with one locally boundary fractional damping}
\begin{document}
\author{Mohammad AKIL$^1$ and Ali Wehbe$^2$  \vspace{0.58cm}\\
$^1$Universit\'e Savoie Mont Blanc, Laboratoire LAMA, Chamb\'ery-France.\\
$^2$Lebanese University, Faculty of sciences 1, Khawarizmi Laboratory of Mathematics and Applications-KALMA, Hadath-Beirut. \\ \vspace{0.2cm}
%$^2$ Insa de Rouen, LMI, 685 Avenue de l'Université, Rouen, France.\\ \vspace{0.2cm}
Email: mohammad.akil@univ-smb.fr,  ali.wehbe@ul.edu.lb.
}
\begin{abstract}
In this work, we consider a system of  multidimensional   wave equations coupled by velocities  with one localized fractional  boundary damping. First, using a general criteria of Arendt- Batty,  by assuming that the boundary control region satisfy some geometric conditions,   under the equality speed propagation  and the coupling parameter of the two equations is small enough,  we show the strong stability of our system in the absence of the compactness of the resolvent. Our system is not uniformly stable in general since it is the case of the interval. Hence, we look for a  polynomial decay rate for smooth initial data for our system by applying a frequency domain approach combining with a multiplier method. Indeed, by assuming that the boundary control region satisfy some geometric conditions and the waves propagate with equal speed and the coupling parameter term is small enough, we establish a polynomial energy decay rate for smooth solutions, which depends on the order of the fractional derivative.
\end{abstract}

\maketitle
\pagenumbering{roman}
\maketitle
\tableofcontents
%\clearpage
\pagenumbering{arabic}
\setcounter{page}{1}

\maketitle
%WWWWWWWWWWWWWWWWWWWWWWWWWWWWWWWWWWWWWWWWWWWWWWWWWWWWWWWWWWWWWWWWWWWWWWWWWWWWWWWWWWWWWW
%WWWWWWWWWWWWWWWWWWWWWWWWWWWWWWWWWWWWWWWWWWWWWWWWWWWWWWWWWWWWWWWWWWWWWWWWWWWWWWWWWWW

\setcounter{equation}{0}
	\section{Introduction}
\noindent Let $\Omega$ be a bounded domain of $\R^d$, $d\geq 2$, with a Lipschitz boundary $\Gamma=\Gamma_0\cup \Gamma_1$, with $\Gamma_0$ and $\Gamma_1$ open subsets of $\Gamma$ such that $\overline{\Gamma_0}\cap \overline{\Gamma_1}=\emptyset$ and $\Gamma_1$ is non empty. We consider the multidimensional coupled wave equations
	\begin{eqnarray}
	u_{tt}-\Delta u+by_t&=&0,\quad \text{in}\quad \Omega\times (0,+\infty)\label{chap3eq1} ,\\
	y_{tt}-a\Delta y-bu_t&=&0,\quad \text{in}\quad \Omega\times (0,+\infty)\label{chap3eq2} ,\\
	u&=&0,\quad \text{on}\quad \Gamma_0\times (0,+\infty) ,\label{chap3eq3}\\
	y&=&0,\quad \text{on}\quad \Gamma\times (0,+\infty) ,\label{chap3eq4}\\
	\frac{\partial u}{\partial\nu}+\gamma\partial_t^{\alpha,\eta}u&=&0,\quad \text{on}\quad \Gamma_1\times (0,+\infty) ,\label{chap3eq5}
	\end{eqnarray}
	where $\nu$ is the unit outward normal vector along the boundary $\Gamma_1$, $\gamma$ is a positive constant involved in the boundary control, $a>0$ and $b\in \R_{\ast}$. The notation $\partial_t^{\alpha,\eta}$ stands the generalized Caputo's fractional derivative see \cite{Caputo:76} of order $\alpha$ with respect to the time variable and is defined by 
	\begin{equation}\label{CaputoDerivative}
	[D^{\alpha,\eta}\omega](t)=\partial_t^{\alpha,\eta}\omega(t)=\frac{1}{\Gamma(1-\alpha)}\int_0^t(t-s)^{-\alpha}e^{-\eta(t-s)}\frac{d\omega}{ds}(s)ds,\ \ 0<\alpha<1,\ \eta\geq 0.
	\end{equation}
	The system \eqref{chap3eq1}-\eqref{chap3eq5} is considered with initial conditions 
	\begin{eqnarray}
	u(x,0)=u_0(x),&u_t(x,0)=u_1(x)&\text{where}\quad x\in \Omega,\label{chap3eq6}\\
	y(x,0)=y_0(x),&y_t(x,0)=y_1(x)&\text{where}\quad x\in \Omega,\label{chap3eq7}
	\end{eqnarray}

	%\noindent Let $\Omega \subset \R^N$ be a bounded domain of $\R^N$ with smooth boundary $\Gamma$ of class $C^2$ such that $\Gamma=\Gamma_1\cup \Gamma_2$ and $\bar{\Gamma}_1\cap \bar{\Gamma}_2=\emptyset$. We consider the coupled wave equations
	%\begin{equation}\label{N1}
	%\left\{\begin{array}{lll}
	%u_{tt}-\Delta u+by_t=0&\text{on}&\Omega\times ]0,+\infty[,\\
	%y_{tt}-a\Delta y-bu_t=0&\text{on}&\Omega\times ]0,+\infty[
	%\end{array}\right.
	%\end{equation}
	%where $(x,t)\in \Omega\times ]0,+\infty[$, $a>0$ and $b\in \R^{\ast}$. This system is subject to the boundary conditions 
	%\begin{equation}\label{N2}
	%\left\{\begin{array}{lll}
	%u=0&\text{on}&\Gamma_0\times \R^+,\\
	%y=0&\text{on}&\Gamma\times \R^+,\\
	%\dfrac{\partial{u}}{\partial{\nu}}=-\gamma\partial_t^{\alpha,\eta}u&\text{on}&\Gamm%a_1\times \R^+.
	%\end{array}\right.
	%\end{equation}
	%where $\nu$ is the unit exterior normal vector on $\Gamma_1$ and $\gamma$ is a positive constant involved in the boundary control. The notation $\partial_t^{\alpha,\eta}$ stands for the generalized Caputo's fractional derivative see \cite{Mbodje:06} of order $\alpha$ with respect to the time variable %and  is defined by 
	%$$
	%\partial_t^{\alpha,\eta}\omega(t)=\frac{1}{\Gamma(1-\alpha)}\int_0^t(t-s)^{-\alpha}e%^{-\eta(t-s)}\frac{d\omega}{ds}(s)ds.
	%$$
	%The system \eqref{N1}, \eqref{N2} is considered with initial conditions
	%\begin{equation}\label{N3}
	%\left\{\begin{array}{llll}
	%u(x,0)=u_0(x),&u_t(x,0)=u_1(x)&\text{where}&x\in\Omega,\\
	%y(x,0)=y_0(x),&y_t(x,0)=y_1(x)&\text{where}&x\in\Omega.
	%\end{array}\right.
	%\end{equation}
	\noindent The fractional derivative operator of order $\alpha$, $0<\alpha<1$, is defined by 
	\begin{equation}\label{N4}
	[D^{\alpha}f](t)=\int_0^t\dfrac{(t-\tau)^{-\alpha}}{\Gamma(1-\alpha)}\dfrac{df}{d\tau}(\tau)d\tau.
	\end{equation}
	\begin{comment}
	The fractional differentiation is inverse operation of fractional integration that is defined by 
	\begin{equation}\label{N5}
	[I^{\alpha}f](t)=\int_0^t\dfrac{(t-\tau)^{\alpha-1}}{\Gamma(\alpha)}f(\tau)d\tau,\quad 0<\alpha<1.
	\end{equation} 
	From equations \eqref{N4} and \eqref{N5}, clearly that
	\begin{equation}\label{N6}
	[D^{\alpha}f]=I^{1-\alpha}Df.
	\end{equation}
	Now, we present marginally distinctive forms of \eqref{N4} and \eqref{N5}. These exponentially modified fractional integro-differential operators an will be denoted by us follows 
	\begin{equation}\label{N7}
	[D^{\alpha,\eta}f](t)=\int_0^1\dfrac{(t-\tau)^{-\alpha}e^{-\eta(t-\tau)}}{\Gamma(1-\alpha)}\dfrac{df}{d\tau}(\tau)d\tau
	\end{equation}
	and
	\begin{equation}\label{N8}
	[I^{\alpha,\eta}f](t)=\int_0^t\dfrac{(t-\tau)^{\alpha-1}e^{-\eta(t-\tau)}}{\Gamma(\alpha)}f(\tau)d\tau.
	\end{equation}
	Note that the two operators $D^{\alpha}$ and $D^{\alpha,\eta}$ differ just by their Kernels. $D^{\alpha,\eta}$ is merely Caputo's fractional derivative operator, expect for its exponential factor. Thus, similar to identity \eqref{N6}, we do have 
	\begin{equation}\label{N9}
	[D^{\alpha,\eta}f]=I^{1-\alpha,\eta}Df.
	\end{equation}
	The order of our derivatives is between $0$ and $1$.
	\end{comment}
	The boundary fractional damping of the type   $\partial_t^{\alpha,\eta}u$ where $0<\alpha<1$, $\eta\geq 0$ arising from the material property has been used in several applications such as in physical, chemical, biological, ecological phenomena. For more details we refer the readers to  \cite{Mbodje:06},  \cite{mbomon:95},  \cite{bagleytorvik2:83}, \cite{bagleytorvik3:83}, \cite{bagleytorvik1:83} and \cite{mainardibonetti}. In theoretical point of view, fractional derivatives involves singular and non-integrable kernels ($t^{-\alpha}$, $0<\alpha<1$). This leads to substantial mathematical difficulties since all the previous methods developed for convolution terms with regular and/or integrable kernels are no longer valid. \\
	There are a few numbers of publications concerning the stabilization of a distributed system with fractional damping. In \cite{Mbodje:06}, B. Mbodje considered a $1-d$  wave equation with boundary fractional damping acting on a part of the boundary of the domain:
	\begin{equation}\label{I2}
	\left\{\begin{array}{rllll}
	\partial_t^2u(x,t)-\partial_x^2u(x,t)&=&0,&0<x<1,&t>0,\\
	u(0,t)&=&0.&&\\
	\partial_xu(1,t)&=&-\gamma\partial_t^{\alpha,\eta}u(1,t),&0<\alpha<1,&\eta\geq0,\\
	u(x,0)&=&u_0(x),&&\\
	\partial_tu(x,0)&=&v_0(x),
	\end{array}
	\right.
	\end{equation}
	Firstly, he proved that system \eqref{I2} is not uniformly stable, on other words its energy has no exponential decay rate. However, using LaSalle's invariance principle, he proved that system \eqref{I2} is strongly stable for the usual initial data. Secondly, he established a polynomial energy decay rate of type $\frac{1}{t}$ for smooth initial data. In \cite{akilwehbe01}, Akil and wehbe considered a multidimensional wave equation with boundary fractional damping acting on a part of the boundary of the domain:
	\begin{equation}\label{AW}
	\left\{\begin{array}{lllll}
	u_{tt}-\Delta u&=&0,&\text{in}&\Omega\times \R^{+},\\
	u&=&0,&\text{on}&\Gamma_0\times \R^{+},\\
	\displaystyle
	\frac{\partial u}{\partial\nu}+\gamma\partial_t^{\alpha,\eta}u&=&0,&\text{on}&\Gamma_1\times \R^+,\\
	u(x,0)&=&u_0(x)&\text{in}&\Omega,\\
	u_t(x,0)&=&u_1(x),&\text{in}&\Omega.
	\end{array}
	\right.
	\end{equation}
	Firstly, combining  general criteria of Arendt and Batty with Holmgren's theorem we showed the strong stability of system \eqref{AW} in the absence of the compactness of the resolvent and without any additional geometric conditions. Next, the authors showed that their system is not uniformly stable in general, since it is the case of the interval. Hence, we look for a polynomial decay rate for smooth initial data for our system by applying a frequency domain approach combining with a multiplier method. Indeed, by assuming that the boundary control region satisfy the Geometric Control Condition \textbf{(GCC)} and by
	using the exponential decay of the wave equation with a standard damping
	$$
	\partial_{\nu}u(x,t)+u_t(x,t)=0,\quad \text{on}\quad \gamma_1\times \R_+^{\ast}
	$$
	they established a polynomial energy decay rate for smooth solutions, which depends on the order of the fractional derivative ($t^{-\frac{1}{1-\alpha}}$).
	In \cite{zhda:14}, Zhang and Dai considered the multidimensional wave equation with boundary source term and fractional dissipation defined by 
	\begin{equation}\label{I3}
	\left\{\begin{array}{rllll}
	u_{tt}-\Delta u&=&0,&x\in \Omega&t>0,\\
	u&=&0,&x\in\Gamma_0&t>0,\\
	\displaystyle{\frac{\partial u}{\partial\mu}+\partial_t^{\alpha}u}&=&|u|^{m-1}u,&x\in \Gamma_1&t>0,\\
	u(x,0)&=&u_0,&x\in \Omega,&\\
	u_t(x,0)&=&u_1(x),&x\in \Omega&
	\end{array}
	\right.
	\end{equation}
	where $m>1$. They proved by Fourier transforms and the Hardy-Littlewood-Sobolev inequality the exponential stability for sufficiently large initial data.	\\
	In \cite{benaissa:17}, Benaissa and al. considered the Euler-Bernoulli beam equation with boundary dissipation of fractional derivative type defined by 
	\begin{equation}\label{I4}
	\left\{\begin{array}{rllll}
	\varphi_{tt}(x,t)+\varphi_{xxxx}(x,t)&=&0,&\text{in}&]0,L[\times ]0,+\infty[,\\
	\varphi(0,t)=\varphi_x(0,t)&=&0,&\text{in}&]0,+\infty[,\\
	\varphi_{xx}(L,t)&=&0,&\text{in}&]0,+\infty,\\
	\varphi_{xxx}(L,t)&=&\gamma\partial_t^{\alpha,\eta}\varphi(L,t),&\text{in}&]0,+\infty[
	\end{array}\right.
	\end{equation}
	where $0<\alpha<1$, $\eta\geq 0$ and $\gamma>0$. If $\eta=0$,  by  using spectral analysis, they proved the non-uniform stability. Otherwise, if  $\eta> 0$, they  proved that the energy of system \eqref{I4} decay as time goes to infinity as $\frac{1}{t^{\frac{1}{1-\alpha}}}$.\\
	In \cite{Boussouira01} see also (\cite{Boussouira03}-\cite{Boussouira02}), Alabau-Boussouira studied the boundary indirect stabilization of a system of two-level second order evolution equations coupled through the zero-order terms. The lack of uniform stability as proved in the case where the ratio of the wave propagation speeds of the two equation is equal to $\frac{1}{k^2}$ with $k$ being an integer and $\Omega$ is a cubic domain in $\R^3$, or by a compact perturbation argument and a polynomial energy decay rate of type $\frac{1}{\sqrt{t}}$ is obtained by a general integral inequality in the case where the wave propagates at the same speed and $\Omega$ is a star-shaped domain in $\R^N$. These results are very interesting but not optimal.\\ 
	In \cite{AmmariMehre01}, Ammari and Mehrenberger gave a characterization of the stability of a system of two evolution equations coupling through the velocity terms subject one bounded viscous feedbacks.\\ 
	In \cite{LiuRao01} Liu and Rao, considered a system of two coupled wave equations with one boundary damping  described by 
	\begin{equation}\label{LR1}
	\left\{\begin{array}{lllll}
	u_{tt}-a\Delta u+\alpha y&=&0,&\text{in}&\Omega\times \R^+,\\
	y_{tt}-\Delta y+\alpha u&=&0,&\text{in}&\Omega\times \R^+,\\
	a\partial_{\nu}u+\gamma u+u_t&=&0,&\text{on}&\Gamma_1\times \R^+,\\
	u&=&0,&\text{on}&\Gamma_0\times \R^+,\\
	y&=&0,&\text{on}&\Gamma\times \R^{+}
	\end{array}
	\right.
	\end{equation}
	Under some arithmetic condition on the ratio of the wave propagation speeds of the two equations, they established  a polynomial energy decay rate for smooth initial data on a 1-dimensional domain. Furthermore, under the equality speed wave propagation, they proved that the energy of the system \eqref{LR1} decays at the rate $\frac{1}{t}$ for smooth initial data on an $N-$dimensional domain $\Omega$ with the usual geometrical condition.

	\noindent In \cite{KodjaBader01} Ammar-Khodja and Bader studied the simultaneous boundary stabilization of a system of two wave equations coupling through the velocity terms described by  
	\begin{equation}\label{AMMKH}
	\left\{\begin{array}{lllll}
	u_{tt}-u_{xx}+b(x)y_t&=&0,&\text{in}&(0,1)\times (0,+\infty),\\
	y_{tt}-ay_{xx}-b(x)u_t&=&0,&\text{in}&(0,1)\times (0,+\infty),\\
	y_t(0,t)-\alpha\left(y_x(0,t)+u_t(0,t)\right)&=&0,&\text{in}&(0,+\infty),\\
	u_x(0,t)-\alpha y_t(0,t)&=&0,&\text{in}&(0,+\infty),\\
	u(1,t)=y(1,t)&=&0,&\text{in}&(0,+\infty)
	\end{array}
	\right.
	\end{equation}
	where $a$ and $\alpha$ are two constants strictly positives and $b\in {\mathcal{C}}^0\left([0,1]\right)$.  In the general case, when $a\neq 1$, they proved that the system \eqref{AMMKH} is uniformly stable if and only if it is strongly stable and there exists  integer numbers $p$ and $q$ such that $a=\frac{(2p+1)^2}{q^2}$.  Otherwise, under the equal speed wave propagation condition (i.e.  $a=1$), they proved that the system \eqref{AMMKH} is uniformly stable if and only if it is strongly stable and the coupling parameter $b(x)$ verifies that $\int_0^1b(x)dx\neq \frac{(2k+1)\pi}{2}$ for any $k\in \z$. Note that, system \eqref{AMMKH} is damped by two related boundary controls.\\
In \cite{akilwehbe02}, Akil, Ghader and Wehbe considered a $1-d$ coupled wave equations on its indirect boundary stabilization defined by: 
\begin{equation}\label{AGW}
	\left\{\begin{array}{lllll}
	u_{tt}-u_{xx}+by_t&=&0,&\text{in}&(0,1)\times \R^{\ast}_+,\\
	y_{tt}-ay_{xx}-bu_t&=&0,&\text{in}&(0,1)\times \R^{\ast}_+,\\
	u_x(1,t)+\gamma \partial_t^{\alpha,\eta}y(1,t)&=&0,&\text{in}&\R^{\ast}_+,\\
	u(0,t)=y(0,t)=y(1,t)&=&0,&\text{in}&\R^{\ast}_+,
	\end{array}\right.
	\end{equation}
	where $a>0$ and $b\in \R^{\ast}$. Firstly, they proved that system \eqref{AGW} is strongly stable if and only if the coupling parameter $b$ is outside a discrete set $S$ of exceptional values. Next, for $b\notin S$, they proved that the energy decay rate of system \eqref{AGW} is greatly influenced by the nature of the coupling parameter $b$ (an additional condition on $b$) and by the arithmetic property of the ratio of the wave propagation speeds $a$. They established a polynomial energy decay rate of type $t^{-s(\alpha)}$, such that 
\begin{enumerate}
\item If $a=1$ and $b\neq k\pi$, then $s(\alpha)=\frac{2}{1-\alpha}$.
\item If ($a=1$ and $b=k\pi$), or ($a\neq 1$, $a\in \mathbb{Q}$, $\sqrt{a}\notin \mathbb{Q}$ and $b$ small enough), or ($a\neq 1$ and $\sqrt{a}\in \mathbb{Q}$) or ($a\neq 1$ and for almost $\sqrt{a}\in \mathbb{R}/\mathbb{Q}$), then $s(\alpha)=\frac{2}{5-\alpha}$.
\end{enumerate}
\noindent The polynomial energy decay rate occurs in many control problems where the open-loop systems are strongly stable, but not exponentially stable (see \cite{Lasiescka02}). We quote \cite{Lebau01}, \cite{Lebau02} for wave equations with local internal or boundary damping, \cite{batkai:06} and \cite{liurao:05} for abstract system, \cite{Russell01} and \cite{ZuazuazZhang01} for systems of coupled wave-heat equations.\\[0.1in]
%%%	
	\noindent This paper is organized as follows: In Subsection \ref{AMWP}, we reformulate the system \eqref{chap3eq1}-\eqref{chap3eq7} into an augmented model system by coupling the wave equation with a suitable equation and we prove the well-posedness of our system by semigroup approach. In subsection \ref{strongstab}, under the equal speed wave propagation condition (i.e. $a=1$) and if the coupling parameter $b$ is small enough, using a general criteria of Arendt-Batty  theorem, we show that the strong stability of our system for  in the absence of the compactness of the resolvent and under the multiplier geometric control condition noted by $\textbf{(MGC)}$. 
 In Section \ref{PSMCC}, under the equal speed wave propagation and the coupling parameter $b$ verify another condition, we look for  a polynomial decay rate for smooth initial data for our system by applying a frequency domain approach combining with a multiplier method. Indeed, by assuming that the boundary control region satisfy $\textbf{(MGC)}$ boundary condition, we establish a   polynomial energy decay for smooth solution of type $\frac{1}{t^{\frac{2}{1-\alpha}}}$.
\section{Well-Posedness and Strong Stability}\label{chap3WPSS}
\noindent In this section, we will study the strong stability of system \eqref{chap3eq1}-\eqref{chap3eq7} in the absence of the compactness of the resolvent and by the $\textbf{(MGC)}$ condition.  First, we will study the existence, uniqueness, and regularity of the system of our system.
%%%%%%%%%%%%%%%%%%%%%%%%%%%%%%%%%%%%%%%%%%%%%%%%%%%%%%%%%%%%%%%%%%%%%%%%%%%%%%%%%%%%%%%%%%%%%%%%%%%%%%%%%%%%%%%%%%%%%%%%%%%%%%%%%%%%%%%%%%%%%%%%%%%%%%%%%%%%%%%%%%%%%%%%%%%%%%%%%%%%%%%%%%%%%%%%%%%%%%%%%%%%%%%%%%%%%%%%%%%%%%%%%%%%%%%%%%%%%%%%%%%%%%%%%%%%%%%%%%%%%%%%%%%%%%%%%%%%%%%%%%%%%%%%%%%%%%%%%%%%%%%%%%%%%%%%%%%%%%%%%%%%%%%%%%%%%%
\subsection{Augmented model and well-Posedness}\label{AMWP}
\noindent Firstly, we reformulate system \eqref{chap3eq1}-\eqref{chap3eq7}. For this aim, we use the following Theorem 
\begin{theoreme}\label{reformulated}\rm{(See \cite{akilwehbe01})
Let $\alpha\in (0,1)$ and $\mu$ be the function defined almost everywhere on $\R$ by 
\begin{equation*}
\mu(\xi)=|\xi|^{\frac{2\alpha-d}{2}}.
\end{equation*}
Then, for $\eta\geq 0$, the  relation between the "input" $U$ and the "output" $O$ of the following system 
\begin{equation*}
\begin{array}{lllll}
\partial_t\omega(x,t,\xi)+\left(\abs{\xi}^2+\eta\right)\omega(x,t,\xi)-U(x,t)\mu(\xi)&=&0,\quad (x,t,\xi)\in \Omega\times (0,+\infty)\times \R^d,\nline
\omega(x,0,\xi)&=&0,\quad (x,\xi)\in \Omega\times \R^d,\nline
\displaystyle
O(x,t)-\kappa(\alpha,d)\int_{\R^d}\mu(\xi)\omega(x,t,\xi)d\xi &=&0,\quad (x,t)\in \Omega\times (0,+\infty)
\end{array}
\end{equation*}
is given by 
\begin{equation}\label{App5}
O=I^{1-\alpha,\eta}U,
\end{equation}
where 
\begin{equation}\label{InverseCaputoDerivative}
\kappa(\alpha,d)=\frac{2\sin(\alpha\pi)\Gamma\left(\frac{d}{2}+1\right)}{d\pi^{\frac{d}{2}+1}}\quad \text{and}\quad [I^{\alpha,\eta}U](x,t)=\int_0^t\frac{(t-\tau)^{\alpha-1}e^{-\eta(t-\tau)}}{\Gamma(\alpha)}U(x,\tau)d\tau.
\end{equation}
From Equations \eqref{CaputoDerivative} and \eqref{InverseCaputoDerivative} it is clearly 
\begin{equation*}
D^{\alpha,\eta}U=I^{1-\alpha,\eta}DU.
\end{equation*}}
\end{theoreme}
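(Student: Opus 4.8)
Since Theorem \ref{reformulated} is taken from \cite{akilwehbe01}, I will only sketch the computation behind it. The plan is to solve the first-order ODE in $t$ explicitly, substitute the result into the output relation, exchange the order of integration, and evaluate the resulting integral over $\R^d$ in closed form by polar coordinates and the Euler Gamma function; the normalising constant $\kappa(\alpha,d)$ is then forced by the reflection formula $\Gamma(\alpha)\Gamma(1-\alpha)=\pi/\sin(\alpha\pi)$.

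First I would note that, for fixed $x$ and $\xi$, the equation $\partial_t\omega+(|\xi|^2+\eta)\omega=U(x,t)\mu(\xi)$ with $\omega(x,0,\xi)=0$ is a scalar linear ODE, whose Duhamel solution is
\[
\omega(x,t,\xi)=\mu(\xi)\int_0^t e^{-(|\xi|^2+\eta)(t-s)}U(x,s)\,ds .
\]
Substituting this into $O(x,t)=\kappa(\alpha,d)\int_{\R^d}\mu(\xi)\omega(x,t,\xi)\,d\xi$ and using $\mu(\xi)^2=|\xi|^{2\alpha-d}$, Fubini's theorem gives
\[
O(x,t)=\kappa(\alpha,d)\int_0^t e^{-\eta(t-s)}U(x,s)\left(\int_{\R^d}|\xi|^{2\alpha-d}e^{-|\xi|^2(t-s)}\,d\xi\right)ds .
\]
The interchange is legitimate because $s\mapsto U(x,s)$ is bounded on $[0,t]$ while, for each $s<t$, the map $\xi\mapsto|\xi|^{2\alpha-d}e^{-|\xi|^2(t-s)}$ is integrable on $\R^d$: in polar coordinates its radial density behaves like $r^{2\alpha-1}$ near the origin, which is integrable since $\alpha>0$, and the Gaussian factor controls the behaviour at infinity.

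Then I would compute the inner integral: writing $c_d=2\pi^{d/2}/\Gamma(d/2)$ for the area of the unit sphere of $\R^d$, polar coordinates together with the substitution $v=r^2\tau$ yield
\[
\int_{\R^d}|\xi|^{2\alpha-d}e^{-|\xi|^2\tau}\,d\xi=c_d\int_0^\infty r^{2\alpha-1}e^{-r^2\tau}\,dr=\frac{c_d}{2}\,\Gamma(\alpha)\,\tau^{-\alpha},\qquad \tau>0 ,
\]
so that, with $\tau=t-s$,
\[
O(x,t)=\kappa(\alpha,d)\,\frac{\pi^{d/2}\Gamma(\alpha)}{\Gamma(d/2)}\int_0^t (t-s)^{-\alpha}e^{-\eta(t-s)}U(x,s)\,ds .
\]
Comparing with $[I^{1-\alpha,\eta}U](x,t)=\frac{1}{\Gamma(1-\alpha)}\int_0^t(t-s)^{-\alpha}e^{-\eta(t-s)}U(x,s)\,ds$, the two expressions coincide exactly when
\[
\kappa(\alpha,d)=\frac{\Gamma(d/2)}{\pi^{d/2}\,\Gamma(\alpha)\Gamma(1-\alpha)}=\frac{\Gamma(d/2)\sin(\alpha\pi)}{\pi^{d/2+1}}=\frac{2\sin(\alpha\pi)\Gamma\!\left(\frac{d}{2}+1\right)}{d\,\pi^{d/2+1}},
\]
where I used $\Gamma(\alpha)\Gamma(1-\alpha)=\pi/\sin(\alpha\pi)$ and $\Gamma(\frac{d}{2}+1)=\frac{d}{2}\Gamma(\frac{d}{2})$. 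This is precisely the value of $\kappa(\alpha,d)$ in \eqref{InverseCaputoDerivative}, hence $O=I^{1-\alpha,\eta}U$, and $D^{\alpha,\eta}U=I^{1-\alpha,\eta}DU$ follows at once from \eqref{CaputoDerivative} and the definition of $I^{1-\alpha,\eta}$.

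The only genuinely delicate point in this scheme is the justification of Fubini's theorem and of the convergence of the $\xi$-integral near the origin, which is exactly where the condition $\alpha>0$ (hence $\alpha\in(0,1)$) enters; everything else is bookkeeping. One should also keep in mind that the kernel $(t-s)^{-\alpha}$ is only locally integrable in $s$, so the identity is to be read for inputs $U$ that are at least continuous in time, which is the setting in which the augmented system will be used.
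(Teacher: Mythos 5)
Your proposal is correct and follows essentially the same route as the source the paper defers to: the paper states Theorem~\ref{reformulated} without proof, citing \cite{akilwehbe01}, and the standard argument there is exactly your Duhamel--Fubini--polar-coordinates computation. Your evaluation of the radial integral and the identification of $\kappa(\alpha,d)$ via $\Gamma(\alpha)\Gamma(1-\alpha)=\pi/\sin(\alpha\pi)$ and $\Gamma\left(\tfrac{d}{2}+1\right)=\tfrac{d}{2}\Gamma\left(\tfrac{d}{2}\right)$ match the analogous hyper-spherical computations the authors carry out in Lemma~\ref{Appendix1} of the Appendix.
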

\noindent From Theorem \ref{reformulated}, system  \eqref{chap3eq1}-\eqref{chap3eq7} may be recast into the following model:
\begin{eqnarray}
u_{tt}(x,t)-\Delta u(x,t)+by_t(x,t)&=&0,\quad (x,t)\in \Omega\times (0,+\infty),\label{aug1}\nline
y_{tt}(x,t)-a\Delta y(x,t)-bu_t(x,t)&=&0,\quad (x,t)\in \Omega\times (0,+\infty),\label{aug2}\nline
\partial_t\omega(x,t,\xi)+\left(|\xi|^2+\eta\right)\omega(x,t,\xi)-\mu(\xi)\partial_tu(x,t,\xi)&=&0,\quad (x,t,\xi)\in \Gamma_1\times (0,+\infty)\times \R^d \label{aug3},\nline
u(x,t)&=&0,\quad (x,t)\in \Gamma_0\times (0,+\infty) ,\label{aug4}\nline
y(x,t)&=&0,\quad (x,t)\in \Gamma\times (0,+\infty) ,\label{aug5}\nline 
\frac{\partial u}{\partial\nu}(x,t)+\gamma\kappa(\alpha,d) \int_{\R^d}\mu(\xi)\omega(x,t,\xi)d\xi &=&0,\quad  (x,t,\xi)\in \Gamma_1\times (0,+\infty)\times \R^d \label{aug6}
\end{eqnarray}
where $\gamma$ is a positive constant, $\eta\geq 0$ and $\kappa(\alpha,d)=\frac{2\sin(\alpha\pi)\Gamma\left(\frac{d}{2}+1\right)}{d\pi^{\frac{d}{2}+1}}$. Finally, system \eqref{aug1}-\eqref{aug6} is considered with the following initial conditions
\begin{eqnarray}
u(x,0)=u_0(x),&u_t(x,0)=u_1(x)&\text{where}\quad x\in \Omega,\label{aug7}\\
y(x,0)=y_0(x),&y_t(x,0)=y_1(x)&\text{where}\quad x\in \Omega,\label{aug8}\\
&\omega(x,0,\xi)=0&\hspace{0.08cm}\text{where}\quad (x,\xi)\in \Gamma_1\times \R^d.\label{aug9}
\end{eqnarray}
%%%%%%%%%%%%%%%%%%%%%%%%%%%%%%%%%%%%%%%%%%%%%%%%%%%%%%%%%%%%%%%%%%%
Let us define the energy space $\mathcal{H}$ by  
\begin{equation}\label{matcH}
\mathcal{H}=H_{\Gamma_0}^1(\Omega)\times L^2(\Omega)\times H_0^1(\Omega)\times L^2(\Omega)\times L^2(\Gamma_1\times\R^d),
\end{equation}
such that $H_{\Gamma_0}^1(\Omega)$ is given by 
$$
H_{\Gamma_0}^1(\Omega)=\left\{u\in H^1(\Omega),\quad u=0\ \ \text{on}\ \ \Gamma_0\right\}.
$$
It is easy to check that the spaces $H_{\Gamma_0}^1(\Omega)$ and  $L^2(\Gamma_1\times\R^d)$  are Hilbert spaces over $\mathbb{C}$ equipped respectively  with the norms
$$\left\|u\right\|_{H_{\Gamma_0}^1(\Omega)}=\left(\int_{\Omega}|\nabla u(x)|^2dx\right)^{\frac{1}{2}}\ \ \ \text{and}\ \ \  \left\|\omega\right\|_{ L^2(\Gamma_1\times\R^d)}=\left(\int_{\Gamma_1}\int_{\R^d}|\omega(x,\xi)|^2d\xi d\Gamma\right)^{\frac{1}{2}}. $$
The energy space $\mathcal{H}$ is equipped with the inner product defined by
$$
\left<U,\widetilde{U}\right>_{\mathcal{H}}=\int_{\Omega}\left(v\bar{\tilde{v}}+\nabla u\nabla{\bar{\tilde{u}}}+z\bar{\tilde{z}}+a\nabla y\nabla{\bar{\tilde{y}}}\right)dx+\gamma\kappa(\alpha,d)\int_{\Gamma_1}\int_{\R^d}\omega(x,\xi)\bar{\tilde{\omega}}(x,\xi)d\xi d\Gamma,
$$
for all    
$U=(u,v,y,z,\omega),\ \widetilde{U}=(\tilde{u},\tilde{v},\tilde{y},\tilde{z},\tilde{\omega})$ in $\mathcal{H}
$.  We use $\|U\|_{\mathcal{H}}$ to denote the corresponding norm.
The energy of   system \eqref{aug1}-\eqref{aug9} is given by 
\begin{equation}\label{ENERG}
E(t)=\frac{1}{2}\left\|(u,u_t,y,y_t,\omega)\right\|_{\mathcal{H}}^2.
\end{equation}
%%%%%%%%%%%%%%%%%%%%%
%%%%%%%%%%%%%%%%%%%%%
\begin{lemma}\label{DDENERG}
\rm{Let $U=\left(u,u_t,y,y_t,\omega\right)$ be a regular solution of problem \eqref{aug1}-\eqref{aug9}. Then, the functional energy defined in equation \eqref{ENERG} satisfies 
\begin{equation}\label{DENERG}
\displaystyle{\frac{d}{dt}E(t)=-\gamma\kappa(\alpha,d)\int_{\Gamma_1}\int_{\R^d}\left(|\xi|^2+\eta\right)\left|\omega(x,t,\xi)\right|^2 d\xi d\Gamma}.
\end{equation}}
\end{lemma}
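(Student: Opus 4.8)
The plan is to differentiate $E(t)=\tfrac12\|(u,u_t,y,y_t,\omega)\|_{\mathcal H}^2$ under the integral sign — legitimate for a regular solution — substitute the evolution equations \eqref{aug1}, \eqref{aug2}, \eqref{aug3}, and then integrate by parts. Writing out the inner product of $\mathcal H$, $\tfrac{d}{dt}E(t)$ equals the real part of $\int_{\Omega}\big(u_t\overline{u_{tt}}+\nabla u\cdot\nabla\overline{u_t}+y_t\overline{y_{tt}}+a\,\nabla y\cdot\nabla\overline{y_t}\big)dx+\gamma\kappa(\alpha,d)\int_{\Gamma_1}\int_{\R^d}\omega\,\overline{\omega_t}\,d\xi\,d\Gamma$. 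Using $\overline{u_{tt}}=\Delta\overline u-b\,\overline{y_t}$ and $\overline{y_{tt}}=a\Delta\overline y+b\,\overline{u_t}$ (recall $b\in\R$) and applying Green's formula to $\int_{\Omega}u_t\Delta\overline u\,dx$ and $a\int_{\Omega}y_t\Delta\overline y\,dx$, the volume gradient terms combine as $\Re\int_{\Omega}\nabla u\cdot\nabla\overline{u_t}\,dx-\Re\int_{\Omega}\nabla\overline u\cdot\nabla u_t\,dx=0$ and likewise for $y$, so only boundary terms and the coupling terms survive.

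For the boundary terms, $u=0$ on $\Gamma_0$ gives $u_t=0$ there, and $y=0$ on $\Gamma$ gives $y_t=0$ there, so the only surviving boundary contribution is $\Re\int_{\Gamma_1}u_t\,\partial_\nu\overline u\,d\Gamma$, which by the boundary condition \eqref{aug6} equals $-\gamma\kappa(\alpha,d)\,\Re\int_{\Gamma_1}u_t\int_{\R^d}\mu(\xi)\overline\omega\,d\xi\,d\Gamma$. For the coupling terms, taking real parts and using $b\in\R$ yields $-b\,\Re\int_{\Omega}u_t\overline{y_t}\,dx+b\,\Re\int_{\Omega}y_t\overline{u_t}\,dx=0$, since the second integral is the complex conjugate of the first. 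Finally I substitute \eqref{aug3} in the form $\overline{\omega_t}=-(|\xi|^2+\eta)\overline\omega+\mu(\xi)\overline{u_t}$ into the last term: the piece carrying $(|\xi|^2+\eta)$ produces exactly the claimed dissipation $-\gamma\kappa(\alpha,d)\int_{\Gamma_1}\int_{\R^d}(|\xi|^2+\eta)|\omega|^2\,d\xi\,d\Gamma$, while the remaining piece $\gamma\kappa(\alpha,d)\,\Re\int_{\Gamma_1}\overline{u_t}\int_{\R^d}\mu(\xi)\omega\,d\xi\,d\Gamma$ is the complex conjugate of the surviving boundary contribution and therefore cancels it. Collecting the terms gives \eqref{DENERG}.

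The algebra of these cancellations is routine; the only point requiring genuine care is justifying the differentiation under the integral sign and the use of Green's formula, in particular handling the integral over the unbounded set $\R^d$ against the singular weight $\mu(\xi)=|\xi|^{\frac{2\alpha-d}{2}}$. For a regular solution — one lying in the domain of the generator built in Subsection \ref{AMWP} — the quantities $(|\xi|^2+\eta)^{1/2}\omega$ and $\mu(\xi)\omega$ are square-integrable on $\Gamma_1\times\R^d$ and the trace of $u_t$ on $\Gamma_1$ is well defined, so every integral above converges and the manipulations are valid; the identity then extends to all finite-energy data by density. I expect this integrability bookkeeping (rather than any structural difficulty) to be the main technical obstacle.
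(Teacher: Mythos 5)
Your proof is correct and follows essentially the same route as the paper: the paper multiplies \eqref{aug1}, \eqref{aug2} by $\bar u_t,\bar y_t$ and \eqref{aug3} by $\gamma\kappa(\alpha,d)\bar\omega$, integrates, and adds, which is exactly your computation of $\frac{d}{dt}E(t)$ with the same three cancellations (coupling terms since $b\in\R$, gradient terms via Green's formula, and the $\Gamma_1$ boundary term against the source term of the $\omega$-equation). Your closing remarks on integrability and traces go slightly beyond what the paper records, but change nothing in substance.
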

\begin{proof}
Firstly, multiplying equations \eqref{aug1}  and \eqref{aug2} by $\bar{u_t}$ and $\bar{y_t}$ respectively, then using integration by parts over $\Omega$ and tacking the real part, we get 
\begin{equation}\label{DENERG1}
\displaystyle{
\frac{1}{2}\frac{d}{dt}\left(\int_{\Omega}\left(|u_t|^2+|\nabla u|^2+|y_t|^2+a|\nabla y|^2dx\right)\right)+\gamma\kappa(\alpha,d)\Re\left(\int_{\Gamma_1}\bar{u}_t\int_{\R^d}\mu(\xi)\omega(x,t,\xi)d\xi d\Gamma\right)=0,
}
\end{equation}
where $\Re$ stands for the real part of a complex number.
Secondly, multiplying equation \eqref{aug3} by $\gamma\kappa(\alpha,d)\bar{\omega}(x,\xi,t)$, then  integrating over $\Gamma_1\times \R^d$, we get 
\begin{equation}\label{DENERG2}
\begin{split}
\frac{1}{2}\frac{d}{dt}\left(\gamma\kappa(\alpha,d)\int_{\Gamma_1}\int_{\R^d}|\omega(x,t,\xi)|^2d\xi d\Gamma\right)+\gamma\kappa(\alpha,d)\int_{\Gamma_1}\int_{\R^d}\left(|\xi|^2+\eta\right)|\omega(x,t,\xi)|^2d\xi d\Gamma=\\
\gamma\kappa(\alpha,d)\Re\left(\int_{\Gamma_1}u_t\int_{\R^d}\mu(\xi)\bar{\omega}(x,t,\xi)d\xi d\Gamma\right).
\end{split}
\end{equation}
Combining equations \eqref{DENERG1}-\eqref{DENERG2}, we obtain 
$$
\displaystyle{
\frac{d}{dt}E(t)=-\gamma\kappa(\alpha,d)\int_{\Gamma_1}\int_{\R^d}\left(|\xi|^2+\eta\right)|\omega(x,t,\xi)|^2d\xi d\Gamma.
}
$$  
\end{proof}
%%%%%%%%%%%%%%%%%%%%%%%%
%%%%%%%%%%%%%%%%%%%%%%%%

\noindent From Lemma \eqref{DDENERG}, we deduce that the system \eqref{aug1}-\eqref{aug9} is dissipative in the sense that its energy is a non-increasing function  with respect to  the time variable $t$. We now define the linear unbounded operator $\mathcal{A}$ by 
\begin{equation}\label{domaine}
D(\mathcal{A})=\left\{
\begin{array}{l}
U=(u,v,y,z,\omega)\in\mathcal{H};\ \Delta u\in L^2(\Omega),\ \ \ y\in H^2(\Omega)\cap H_0^1(\Omega),\\
\vspace{0.15cm}v\in H_{\Gamma_0}^1(\Omega),\ \ z\in H_0^1(\Omega),\ \  -(|\xi|^2+\eta)\omega+v|_{\Gamma_1}\mu(\xi)\in L^2(\R^d\times \Gamma_1),\\
\vspace{0.15cm}\dfrac{\partial u}{\partial\nu}+\displaystyle{\gamma\kappa\int_{\R^d}\mu(\xi)\omega(x,\xi)d\xi=0}\ \ \text{on}\ \ \Gamma_1,\ \ \ |\xi|\omega\in L^2(\R^d\times \Gamma_1)
\end{array}\right\}
\end{equation}
and
$$
\mathcal{A}U=\left(v,\ \Delta u-bz,\ z,\ a\Delta y+bv,\ -\left(\abs{\xi}^2+\eta\right)+v|_{\Gamma_1}\mu(\xi)\right),\quad \forall\ U=(u,v,y,z,\omega)\in D(\mathcal{A}).
$$
If $U=\left(u,u_t,y,y_t,\omega\right)$ is the state of \eqref{aug1}-\eqref{aug6}, then this system can be transformed into the first order evolution equation on the Hilbert space $\mathcal{H}$:
\begin{equation}\label{evolution}
U_t(x,t,\xi)=\mathcal{A}U(x,t,\xi),\quad U(x,0)=U_0(x),
\end{equation}
where 
$$
U_0(x)=\left(u_0(x),u_1(x),y_0(x),y_1(x),0\right).
$$
%%%%%%%%%%%%%%%%%%%%%%%%%%%%%%%%%%%%%%%%%%%%%%%%%%%%%%%%%%%%%%%%%%%%%%%%%%%%%%%%%%%%%%%%%%%%%%%%%%%%%%%%%%%%%%%%%%%%%%%%%%%%%%%%%%%%%%%%%%%%%%%%%%%%%%%%%%%%%%%%%%%%%%%%%%%%%%%%%%%%%%%%%%%%%%%%%%%%%%%%%%%%%%%%%%%%%%%%%%%%%%%%%%%%%%%%%%%%%%%%%%%%%%%%%%%%%%%%%%%%%%%%%%%%%%%%%%%%%%%%%%%%%%%%%%%%%%%%%%%%%%%%%%%%%%%%%%%%%%%%%%%%%% m-dissipative
%%%%%%%%%%%%%%%%%%%%%%%%%%%%%%%%%%%%%%%%%%%%%%%%%%%%%%%%%%%%%%%%%%%%%%%%%%%%%%%%%%%%%%%%%%%%%%%%%%%%%%%%%%%%%%%%%%%%%%%%%%%%%%%%%%%%%%%%%%%%%%%%%%%%%%%%%%%%%%%%%%%%%%%%%%%%%%%%%%%%%%%%%%%%%%%%%%%%%%%%%%%%%%%%%%%%%%%%%%%%%%%%%%%%%%%%%%%%%%%%%%%%%%%%%%%%%%%%%%%%%%%%%%%%%%%%%%%%%%%%%%%%%%%%%%%%%%%%%%%
\begin{pro}\label{Maximal}
{\rm{For $\eta\geq0$, the unbounded linear operator $\mathcal{A}$ is m-dissipative in the energy space $\mathcal{H}$.}}
\end{pro}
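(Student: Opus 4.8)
The goal is to show that $\mathcal{A}$ is m-dissipative on $\mathcal{H}$, which by the Lumer--Phillips framework amounts to two things: (i) $\mathcal{A}$ is dissipative, i.e. $\Re\langle \mathcal{A}U,U\rangle_{\mathcal{H}}\le 0$ for all $U\in D(\mathcal{A})$; and (ii) $\mathcal{A}$ is maximal, i.e. $\lambda I-\mathcal{A}$ is surjective for some (hence all) $\lambda>0$ — in practice one takes $\lambda=1$ and shows $I-\mathcal{A}$ is onto. First I would treat dissipativity. Given $U=(u,v,y,z,\omega)\in D(\mathcal{A})$, I compute $\langle\mathcal{A}U,U\rangle_{\mathcal{H}}$ directly from the definition of the inner product: the terms coming from $(v,\Delta u-bz,z,a\Delta y+bv)$ are integrated by parts over $\Omega$, using $v\in H^1_{\Gamma_0}(\Omega)$, $z\in H^1_0(\Omega)$, and the boundary condition $\partial_\nu u+\gamma\kappa\int_{\R^d}\mu(\xi)\omega\,d\xi=0$ on $\Gamma_1$. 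The coupling terms $-b z\bar v$ and $b v\bar z$ cancel in the real part (after pairing with $a\nabla y\cdot\nabla\bar v$-type terms the $\Omega$-integrals of $\nabla u\cdot\nabla\bar v$ and $a\nabla y\cdot\nabla\bar z$ cancel against their conjugates up to the boundary term). The boundary term on $\Gamma_1$ combines with the $\omega$-component $-(|\xi|^2+\eta)\omega+v|_{\Gamma_1}\mu(\xi)$ paired against $\gamma\kappa\,\bar\omega$; the cross terms $\gamma\kappa\,\Re\!\big(\int_{\Gamma_1}\bar v\int_{\R^d}\mu\omega\big)$ and $\gamma\kappa\,\Re\!\big(\int_{\Gamma_1} v\int_{\R^d}\mu\bar\omega\big)$ cancel, leaving
\[
\Re\langle\mathcal{A}U,U\rangle_{\mathcal{H}}=-\gamma\kappa(\alpha,d)\int_{\Gamma_1}\int_{\R^d}\big(|\xi|^2+\eta\big)|\omega(x,\xi)|^2\,d\xi\,d\Gamma\le 0,
\]
which mirrors the energy identity in Lemma \ref{DDENERG}.

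**Maximality.** Next I would prove that $I-\mathcal{A}$ is surjective: given $F=(f_1,f_2,f_3,f_4,f_5)\in\mathcal{H}$, find $U=(u,v,y,z,\omega)\in D(\mathcal{A})$ with $U-\mathcal{A}U=F$. Writing out the components gives $v=u-f_1$, $z=y-f_3$, and the algebraic equation for $\omega$,
\[
\big(1+|\xi|^2+\eta\big)\omega = v|_{\Gamma_1}\mu(\xi)+f_5,
\]
so $\omega=\big(1+|\xi|^2+\eta\big)^{-1}\big(v|_{\Gamma_1}\mu(\xi)+f_5\big)$ is determined once $v$ (hence $u$ on $\Gamma_1$) is known. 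Substituting $v$ and $z$ into the remaining two equations yields the elliptic system
\[
u-\Delta u + b y = f_1+f_2-b f_3,\qquad y - a\Delta y - b u = f_3+f_4+b f_1,
\]
for $(u,y)\in H^1_{\Gamma_0}(\Omega)\times H^1_0(\Omega)$, with the natural boundary condition $\partial_\nu u + \gamma\kappa\int_{\R^d}\mu(\xi)\omega\,d\xi=0$ on $\Gamma_1$ absorbed — after plugging in the formula for $\omega$ — into a boundary term of the form $\partial_\nu u + c(\alpha,d,\eta)\,u = g$ on $\Gamma_1$, where $c(\alpha,d,\eta)=\gamma\kappa(\alpha,d)\int_{\R^d}\mu(\xi)^2(1+|\xi|^2+\eta)^{-1}d\xi\ge 0$ is a finite constant (here one uses $\mu(\xi)=|\xi|^{(2\alpha-d)/2}$ and checks the integral converges at $0$ and $\infty$ precisely because $0<\alpha<1$), and $g\in L^2(\Gamma_1)$ comes from the $f_5$-contribution. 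I would set up the weak (variational) formulation on $V:=H^1_{\Gamma_0}(\Omega)\times H^1_0(\Omega)$: the bilinear form
\[
a\big((u,y),(\varphi,\psi)\big)=\int_\Omega\!\big(u\bar\varphi+\nabla u\!\cdot\!\nabla\bar\varphi+y\bar\psi+a\nabla y\!\cdot\!\nabla\bar\psi + b y\bar\varphi - b u\bar\psi\big)dx+c\!\int_{\Gamma_1}\!u\bar\varphi\,d\Gamma,
\]
whose skew part $b(y\bar\varphi-u\bar\psi)$ contributes nothing to $\Re\, a((u,y),(u,y))$, so the form is coercive on $V$ (the $\int_\Omega(|u|^2+|\nabla u|^2+|y|^2+a|\nabla y|^2)$ part dominates, the boundary term is $\ge 0$). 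Continuity is clear, so Lax--Milgram produces a unique weak solution; elliptic regularity then gives $\Delta u\in L^2(\Omega)$, $y\in H^2(\Omega)\cap H^1_0(\Omega)$, recovers the boundary condition on $\Gamma_1$, and one checks $v\in H^1_{\Gamma_0}(\Omega)$, $z\in H^1_0(\Omega)$, $|\xi|\omega\in L^2(\R^d\times\Gamma_1)$ and $-(|\xi|^2+\eta)\omega+v|_{\Gamma_1}\mu(\xi)\in L^2(\R^d\times\Gamma_1)$ from the explicit formula for $\omega$, so $U\in D(\mathcal{A})$.

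**Main obstacle.** The routine part is dissipativity; the delicate point is the maximality argument, and within it the genuinely technical issue is the correct handling of the $\omega$-variable: one must verify that the closed-form expression for $\omega$ actually lands in $L^2(\Gamma_1\times\R^d)$ and that $|\xi|\omega\in L^2$, which hinges on the convergence of $\int_{\R^d}\mu(\xi)^2(1+|\xi|^2+\eta)^{-1}\,d\xi=\int_{\R^d}|\xi|^{2\alpha-d}(1+|\xi|^2+\eta)^{-1}d\xi$ — finite near $\xi=0$ since $2\alpha-d>-d$, and finite near $\infty$ since $2\alpha-d-2<-d$, both strictly using $0<\alpha<1$ — and similarly for the $|\xi|\omega$ integrand. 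Once this finite constant $c(\alpha,d,\eta)$ is in hand, the elliptic system decouples cleanly via Lax--Milgram (the velocity coupling $\pm b$ being skew-adjoint causes no loss of coercivity), so no smallness of $b$ is needed here. I would also note that $0\in\rho(\mathcal{A})$ can be obtained by the same argument with $\lambda=0$ provided one checks the $\lambda=0$ elliptic form remains coercive (it does, using the Poincaré inequality on $H^1_{\Gamma_0}(\Omega)$ and $H^1_0(\Omega)$), and conclude by Lumer--Phillips that $\mathcal{A}$ generates a $C_0$-semigroup of contractions, i.e. $\mathcal{A}$ is m-dissipative.
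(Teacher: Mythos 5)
Your proposal is correct and follows essentially the same route as the paper: the same dissipativity computation, the same reduction of $(I-\mathcal{A})U=F$ to an elliptic system for $(u,y)$ with the $\omega$-equation solved explicitly, Lax--Milgram on $H^1_{\Gamma_0}(\Omega)\times H^1_0(\Omega)$ with the boundary constant $\gamma\kappa(\alpha,d)\int_{\R^d}\mu(\xi)^2(1+|\xi|^2+\eta)^{-1}d\xi$ (which the paper evaluates as $\gamma(1+\eta)^{\alpha-1}$), and the final verification that $\omega$ and $|\xi|\omega$ lie in $L^2(\Gamma_1\times\R^d)$. The only blemish is a sign slip in the reduced system's right-hand sides (they should read $f_1+f_2+bf_3$ and $f_3+f_4-bf_1$), which is immaterial to the argument.
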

\begin{proof}
For all $U=(u,v,y,z,\omega)\in D(\mathcal{A})$, one has  
\begin{equation}\label{dissipatnes}
\Re\left(\left<\mathcal{A}U,U\right>_{\mathcal{H}}\right)=-\gamma\kappa(\alpha,d)\int_{\Gamma_1}\int_{\R^d}(|\xi|^2+\eta)|\omega(x,\xi)|^2d\xi d\Gamma\leq0.
\end{equation}
\noindent which implies that $\mathcal{A}$ is dissipative. Now, let $F=\left(f_1,f_2,f_3,f_4,f_5\right)\in \mathcal{H}$, we show the existence  of $U=(u,v,y,z,\omega)\in D(\mathcal{A})$, unique solution of the following  equation
\begin{equation}\label{W1}
\left(I-\mathcal{A}\right)U=F.
\end{equation}
Equivalently, one must consider the system given by 
\begin{eqnarray}
u-v&=&f_1,\label{W2}\\
v-\Delta u+bz&=&f_2,\label{W3}\\
y-z&=&f_3,\label{W4}\\
z-(a\Delta y+bv)&=&f_4,\label{W5}\\
\omega(x,\xi)+(|\xi|^2+\eta)\, \omega(x,\xi)-v|_{\Gamma_1}\mu(\xi)&=&f_5(x,\xi)\label{W6}.
\end{eqnarray}
From \eqref{W2} and \eqref{W6}, we get  
\begin{equation}\label{W7}
\omega(x,\xi)=\frac{f_5(x,\xi)}{1+\abs{\xi}^2+\eta}+\frac{u|_{\Gamma_1}\mu(\xi)}{1+\abs{\xi}^2+\eta}-\frac{f_1(x)\, \mu(\xi)}{1+\abs{\xi}^2+\eta}.
\end{equation}
Inserting \eqref{W2} and \eqref{W4} in \eqref{W3} and \eqref{W5}, we get 
\begin{eqnarray}
u-\Delta u+by&=&f_1+f_2+bf_3,\label{W8}\\
y-a\Delta y-bu&=&f_3+f_4-bf_1,\label{W9}
\end{eqnarray}
with the boundary conditions 
\begin{equation}\label{W10}
u=0\quad \text{on}\quad \Gamma_0,\quad \dfrac{\partial u}{\partial\nu}=-\gamma \kappa(\alpha,d)\int_{\R^d}\mu(\xi)\omega(x,\xi)d\xi\quad\text{on}\quad \Gamma_1\quad \text{and}\quad y=0\quad\text{on}\quad \Gamma.
\end{equation}
Let $\phi=(\varphi,\psi)\in H^1_{\Gamma_0}(\Omega)\times H_0^1(\Omega)$. Multiplying Equations \eqref{W8} and \eqref{W9} by $\bar{\varphi}$ and $\bar{\psi}$ respectively, integrate over $\Omega$, then using by parts integration, we get
\begin{eqnarray}
\int_{\Omega}u\bar{\varphi}dx+\int_{\Omega}\nabla u\nabla\bar{\varphi}dx-\int_{\Gamma_1}\frac{\partial u}{\partial\nu}\bar{\varphi}d\Gamma+b\int_{\Omega}y\bar{\varphi}dx&=&\int_{\Omega}F_1\bar{\varphi}dx,\label{W11}\\
\int_{\Omega}y\bar{\psi}dx+a\int_{\Omega}\nabla y\nabla \bar{\psi}dx-b\int_{\Omega}u\bar{\psi}dx&=&\int_{\Omega}F_2\bar{\psi}dx,\label{W12}
\end{eqnarray}
where $F_1= f_1+f_2+bf_3$ and $F_2=f_3+f_4-bf_1$. Using equations \eqref{W7} and \eqref{W10}, we get 
\begin{equation}\label{w11}
-\int_{\Gamma_1}\frac{\partial u}{\partial\nu}\bar{\varphi}d\Gamma=M_1\left(\alpha,\eta\right)+M_2\left(\alpha,\eta\right)\int_{\Gamma_1}u\bar{\varphi}d\Gamma-M_2(\alpha,\eta)\int_{\Gamma_1}f_1\bar{\varphi},
\end{equation}
where
\begin{equation}\label{M1andM2}
M_1(\alpha,\eta)=\gamma\kappa(\alpha,d)\int_{\Gamma_1}\int_{\R^d}\frac{\mu(\xi)f_5(x,\xi)\bar{\varphi}}{1+\abs{\xi}^2+\eta}d\xi d\Gamma\quad \text{and}\quad M_2(\alpha,\eta)=\gamma\kappa(\alpha,d)\int_{\R^d}\frac{\mu^2(\xi)}{|\xi|^2+\eta+1}d\xi.
\end{equation} 
From Lemma \ref{Appendix1}, we have $$M_2(\alpha,d)=\gamma (1+\eta)^{\alpha-1}.$$
Next, by using Cauchy-Schwartz inequality, we get 
\begin{equation}\label{absM1}
\abs{M_1\left(\alpha,\eta\right)}\leq \gamma\kappa(\alpha,d)\left(\int_{\R^d}\frac{\mu^2(\xi)}{\left(1+\abs{\xi}^2+\eta\right)^2}d\xi\right)^{\frac{1}{2}}\|\varphi\|_{L^2(\Gamma_1)}\|f_5\|_{L^2(\Gamma_1\times \R^d)}.
\end{equation}
On the other hand, since $\alpha\in (0,1)$ and $\eta\geq 0$, we get
\begin{equation}\label{otb}
\displaystyle{\int_{\R^d}\frac{\mu^2(\xi)}{\left(1+\abs{\xi}^2+\eta\right)^2}d\xi<\int_{\R^d}\frac{\mu^2(\xi)}{1+\abs{\xi}^2+\eta}d\xi\leq\frac{M_2(\alpha,d)}{\gamma\kappa(\alpha,d)}}\leq \frac{(1+\eta)^{\alpha-1}}{\kappa(\alpha,d)}<+\infty. 
\end{equation}
Inserting \eqref{otb} in \eqref{absM1}, then using the fact that $f_5\in L^2(\Gamma_1\times \R^d)$ and 
the trace theorem, we get that $M_1(\alpha,\eta)$ is well defined. 
Adding Equations \eqref{W11} and \eqref{W12}, we obtain 
\begin{equation}\label{W13}
a((u,y),(\varphi,\psi))=L(\varphi,\psi),\quad \forall (\varphi,\psi)\in H_{\Gamma_0}^1(\Omega)\times H_0^1(\Omega),
\end{equation}
where 
\begin{equation}\label{a}
\begin{array}{lll}
a((u,y),(\varphi,\psi))&=&\displaystyle
\int_{\Omega}u\bar{\varphi} dx+\int_{\Omega}\nabla u\nabla\bar{\varphi} dx+\int_{\Omega}y\bar{\psi} dx+a\int_{\Omega}\nabla y\nabla\bar{\psi} dx\\
\vspace{0.15cm}&&\displaystyle
+M_2(\alpha,\eta)\int_{\Gamma_1}u\bar{\varphi} d\Gamma+b\int_{\Omega}y\bar{\varphi}dx-b\int_{\Omega}u\bar{\psi}dx
\end{array}
\end{equation}
and 
\begin{equation}\label{L}
L(\varphi,\psi)=
\displaystyle
\int_{\Omega}F_1\bar{\varphi}dx+\int_{\Omega}F_2\psi dx-M_1(\alpha,\eta)+M_2\int_{\Gamma_1}f_1\varphi d\Gamma.
\end{equation}
Thanks to \eqref{absM1}, \eqref{a}, \eqref{L} and using the fact that $M_2(\alpha,\eta)>0$, we have that $a$ is a bilinear continuous coercive form on $\left(H_{\Gamma_0}^1(\Omega)\times H_0^1(\Omega)\right)^2$, and $L$ is Linear continuous form on $H_{\Gamma_0}^1(\Omega)\times H_0^1(\Omega)$. Then, using Lax-Milgram Theorem, we deduce that there exists $(u,y)\in H_{\Gamma_0}^1(\Omega)\times H_0^1(\Omega)$ unique solution of the variational problem \eqref{W13}. Applying the classical elliptic regularity and by choosing appropriate test function in \eqref{W13}, we deduce that system \eqref{W8}-\eqref{W10} has a unique solution  $(u,y)\in H_{\Gamma_0}^1(\Omega)\times \left(H^2(\Omega)\cap H_0^1(\Omega)\right)$ such that $\Delta u\in L^2(\Omega)$. Define 
\begin{equation}\label{WWW1}
v:=u-f_1,\ z:=y-f_3,\quad \text{and}\quad  \omega(x,\xi):=\frac{f_5(x,\xi)}{1+\abs{\xi}^2+\eta}+\frac{u|_{\Gamma_1}\mu(\xi)}{1+\abs{\xi}^2+\eta}-\frac{f_1|_{\Gamma_1}\mu(\xi)}{1+\abs{\xi}^2+\eta}.
\end{equation}
In order to complete the existence of $U$ in $D\left(\mathcal{A}\right)$, we need to prove $\omega(x,\xi)$ and $\abs{\xi}\omega(x,\xi)\in L^2\left(\Gamma_1\times \R^{d}\right)$. From \eqref{WWW1}, we get 
\begin{equation*}
\int_{\Gamma_1}\int_{\R^d}\abs{\omega(x,\xi)}^2d\xi d\Gamma_1\leq 3\int_{\Gamma_1}\int_{\R^d}\frac{\abs{f_5(x,\xi)}^2}{(1+\abs{\xi}^2+\eta)^2}d\xi d\Gamma_1+3\left(\int_{\Gamma_1}(\abs{u}^2+\abs{f_1}^2)d\Gamma_1\right)\int_{\R^d}\frac{\abs{\xi}^{2\alpha-d}}{(1+\abs{\xi}^2+\eta)^2}d\xi.
\end{equation*}
Using Lemma \ref{Appendix1}, it easy to see that 
$$
\int_{\R^d}\frac{\abs{\xi}^{2\alpha-d}}{(1+\abs{\xi}^2+\eta)^2}d\xi \leq \int_{\R^d}\frac{\abs{\xi}^{2\alpha-d}}{1+\abs{\xi}^2+\eta}d\xi<+\infty.
$$
On the other hand, using the fact that $f_5\in L^2\left(\Gamma_1\times \R^d\right)$, we obtain 
$$
\int_{\Gamma_1}\int_{\R^d}\frac{\abs{f_5(x,\xi)}^2}{(1+\abs{\xi}^2+\eta)^2}d\xi d\Gamma_1\leq \frac{1}{(1+\eta)^2}\int_{\Gamma_1}\int_{\R^d}\abs{f_5(x,\xi)}^2d\xi d\Gamma_1<+\infty.
$$
Hence, using trace theorem we obtain $\omega(x,\xi)\in L^2(\Gamma_1\times \R^d)$. Next, using \eqref{WWW1}, we get 
$$
\int_{\Gamma_1}\int_{\R^d}\abs{\xi\omega}^2d\xi d\Gamma_1\leq 3\int_{\Gamma_1}\int_{\R^d}\frac{\abs{\xi f_5}^2}{(1+\abs{\xi}^2+\eta)^2}d\xi d\Gamma_1+3\left(\int_{\Gamma_1}(\abs{u}^2+\abs{f_1}^2)d\Gamma_1\right)\int_{\R^d}\frac{\abs{\xi}^{2\alpha-d+2}}{(1+\abs{\xi}^2+\eta)^2}d\xi.
$$
Using trace theorem and Lemma \ref{Appendix11}, we get
$$
\left(\int_{\Gamma_1}(\abs{u}^2+\abs{f_1}^2)d\Gamma_1\right)\int_{\R^d}\frac{\abs{\xi}^{2\alpha-d+2}}{(1+\abs{\xi}^2+\eta)^2}d\xi<+\infty. 
$$
Using the facts that $\abs{\xi}^2<\left(\abs{\xi}^2+\eta+1\right)^2$ and $f_5\in L^2(\Gamma_1\times \R^d)$, we get 
$$
\int_{\Gamma_1}\int_{\R^d}\frac{\abs{\xi f_5}^2}{(1+\abs{\xi}^2+\eta)^2}d\xi d\Gamma_1\leq \int_{\Gamma_1}\int_{\R^d}\abs{f_5}^2d\xi d\Gamma_1<+\infty
$$
Hence we obtain, $\abs{\xi \omega}\in L^2\left(\Gamma_1\times \R^d\right)$. 
$U=(u,v,y,z,\omega)\in D\left(\mathcal{A}\right)$. Finally, since $\omega \in L^2(\Gamma_1\times \R^d)$, we get 
$$
-\left(\abs{\xi}^2+\eta\right)\omega(x,\xi)+v|_{\Gamma_1}\mu(\xi)=\omega(x,\xi)-f_5(x,\xi)\in L^2(\Gamma_1\times \R^d). 
$$
Thus, there exists unique $U:=(u,v,y,z,\omega)\in D(\mathcal{A})$ solution of $(I-\mathcal{A})U=F$. The proof is thus complete.    
\end{proof}
$\newline$
%%%%%%%%%%%%%%%%%%%%%%%%%%%%%%%%%%%%%%%%%%%%%%%%%%%%%%%%%%%%%%%%%%%%%%%%
\noindent From Proposition \ref{Maximal}, the operator $\mathcal{A}$ is m-dissipative on reflexive Hilbert space $\mathcal{H}$, then following theorem 4.6 in \cite{pazy}, we get $\overline{D(\mathcal{A})}=\mathcal{H}$. Thus,  according  to Lumer-Philips Theorem (see \cite{liu:99} and theorem 4.3 in  \cite{pazy}), the operator $\mathcal{A}$ is the infnitesimal generator of a $C_0-$semigroup of contractions $e^{t\mathcal{A}}$. Then the solution of the evolution problem  \eqref{evolution} admits the following representation 
$$
U(t)=e^{t\mathcal{A}}U_0,\quad t\geq 0,
$$
which leads to the well-posedness of \eqref{evolution}. Hence, we have the following result. 
%From proposition \ref{Maximal}, we have the operator $\AA$ is maximal on $\HH$ and consequently, generates a $C_0$-semigroup of contractions $\left(e^{t\AA}\right)_{t\geq 0}$ following Lummer-Philipps theorem . Then, the solution of the evolution equation \eqref{evolution} admits the following representation
%$$
%U(t)=e^{t\AA}U_0,\quad t\geq 0,
%$$
%which leads to the well-posedness of \eqref{evolution}. Hence, semigroup theory allows to show the next existence and uniqueness results:
\begin{theoreme}
{\rm{
For any $U_0\in\mathcal{H}$, problem \eqref{evolution} admits a unique weak solution
$$
U(t)\in C^0(\R^+;\mathcal{H}). 
$$
Moreover, if  $U_0\in D(\mathcal{A}) $, then
$$
U(t)\in C^1(\R^+,\mathcal{H})\cap C^0(\R^+,D(\mathcal{A})).
$$}}
\end{theoreme}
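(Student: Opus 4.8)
The plan is to deduce this statement directly from Proposition~\ref{Maximal} via the Lumer--Phillips theorem together with the standard regularity theory for $C_0$-semigroups, exactly along the lines of the paragraph preceding the statement. The entire mathematical content has already been carried out; what remains is a routine packaging argument.

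First I would record that, by Proposition~\ref{Maximal}, the operator $\mathcal{A}$ is m-dissipative on the Hilbert space $\mathcal{H}$; in particular $\mathcal{A}$ is dissipative and $I-\mathcal{A}$ is surjective onto $\mathcal{H}$. Since $\mathcal{H}$ is a reflexive Banach space, a dissipative operator satisfying this range condition automatically has dense domain (Theorem~4.6 in \cite{pazy}), so that $\overline{D(\mathcal{A})}=\mathcal{H}$. This is the one point that deserves a word of care: rather than exhibiting an explicit dense subset of $D(\mathcal{A})$, one invokes reflexivity of $\mathcal{H}$.

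Next, the Lumer--Phillips theorem (see \cite{liu:99} and Theorem~4.3 in \cite{pazy}) applies to the densely defined m-dissipative operator $\mathcal{A}$ and yields that $\mathcal{A}$ is the infinitesimal generator of a $C_0$-semigroup of contractions $\left(e^{t\mathcal{A}}\right)_{t\geq 0}$ on $\mathcal{H}$. Consequently the abstract Cauchy problem \eqref{evolution} is well posed. For $U_0\in\mathcal{H}$ the function $U(t)=e^{t\mathcal{A}}U_0$ is the unique mild (weak) solution of \eqref{evolution}, and it lies in $C^0(\R^+;\mathcal{H})$ by the strong continuity of the semigroup; uniqueness follows from linearity and the semigroup property. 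If moreover $U_0\in D(\mathcal{A})$, the classical regularity theorem for semigroup generators gives that $t\mapsto e^{t\mathcal{A}}U_0$ is a strong solution: it takes values in $D(\mathcal{A})$, it is differentiable with $\tfrac{d}{dt}e^{t\mathcal{A}}U_0=\mathcal{A}e^{t\mathcal{A}}U_0$, and this derivative is continuous in $\mathcal{H}$; hence $U\in C^1(\R^+;\mathcal{H})\cap C^0(\R^+;D(\mathcal{A}))$. Since every ingredient is already established, there is no genuine obstacle in this step.
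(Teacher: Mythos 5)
Your proposal is correct and follows exactly the paper's own route: invoke Proposition~\ref{Maximal} for m-dissipativity, use reflexivity of $\mathcal{H}$ and Theorem~4.6 of \cite{pazy} to get density of $D(\mathcal{A})$, apply Lumer--Phillips to obtain the contraction semigroup, and conclude by standard semigroup regularity. Nothing is missing.
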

%%%%%%%%%%%%%%%%%%%%%%%%%%%%%%%%%%%%%%%%%%%%%%%%%%%%%%%%%%%%%%%%%%%%%%%%%%%%%%%%%%%%%%%%%%%%%%%%%%%%%%%%%%%%%%%%%%%%%%%%%%%%%%%%%%%%%%%%%%%%%%%%%%%%%%%%%%%%%%%%%%%%%%%%%%%%%%%%%%%%%%%%%%%%%%%%%%%%%%%%%%%%%%%%%%%%%%%%%%%%%%%%%%%%%%%%%%%%%%%%%%%%%%%%%%%%%%%%%%%%%%%%%%%%%%%%%%%%%%%%%%%%%%%%%%%%%%%%%%%%%%%%
\subsection{Strong Stability in case $a=1$}\label{strongstab}
In this subsection, under the condition of equality of speed propagation wave (i.e. $a=1$), the coupling parameter term $b$ is small enough  and the boundary satisfies the $\textbf{(MGC)}$ boundary condition, we study the strong stability of system \eqref{aug1}-\eqref{aug9} in the sense that its energy converges to zero when $t$ goes to infinity for all initial data in $\mathcal{H}$ . First, we introduce here the notions of stability that we encounter in this work and the definition of the boundary multiplier geometric conditions denoted by $\textbf{(MGC)}$. 
\begin{definition}\label{DefinitionStability}
{\rm{Assume that $\mathcal{A}$ is the generator of a $C_0-$semigroup of contractions $\left(e^{t\mathcal{A}}\right)_{t\geq 0}$ on a Hilbert space $\mathcal{H}$. The $C_0-$semigroup $\left(e^{t\mathcal{A}}\right)_{t\geq 0}$ is said to be 
\begin{enumerate}
\item[$1.$] strongly stable if 
$$
\lim_{t\to +\infty}\|e^{t\AA}x_0\|_{\HH}=0,\quad \forall\ x_0\in \mathcal{H};
$$
\item[$2.$] exponentially (or uniformly) stable if there exist two positive constants $M$ and $\epsilon$ such that 
$$
\|e^{t\AA}x_0\|_{\mathcal{H}}\leq Me^{-\epsilon t}\|x_0\|_{\HH},\quad \forall t>0,\ \forall x_0\in \HH;
$$
\item[$3.$] polynomially stable if there exists two positive constants $C$ and $\alpha$ such that 
$$
\|e^{t\mathcal{A}}x_0\|_{\HH}\leq Ct^{-\ell}\|\AA x_0\|_{\HH},\quad \forall t>0,\quad \forall x_0\in D\left(\AA\right).
$$ 
In that case, one says that conditions of \eqref{evolution} decays at $t^{-\ell}$. The $C_0-$semigroup $\left(e^{t\AA}\right)_{t\geq 0}$ is said to be polynomially stable with optimal decay rate $t^{-\ell}$ (with $\ell>0$) if it is polynomially stable with decay rate $t^{-\ell}$ and, for any $\varepsilon>0$ small enough, there exists solutions of \eqref{evolution} which do not decay at a rate $t^{-(\ell-\varepsilon)}$
\end{enumerate}}}
\end{definition}
\noindent In fact, since the resolvent of $\mathcal{A}$ is not compact, then the classical methods such as Lasalle's invariance principle \cite{Slemrod:89} or the spectrum decomposition theory of Benchimol \cite{benchimol:78} are not applicable. In this case, we use  a general criteria of Arendt-Batty \cite{arendt:88}. We will rely on the following result obtained by Arendt and Batty.
\begin{theoreme}\label{arendtbatty}{\rm{(\textbf{Arendt and Batty in} \cite{arendt:88}) Assume that $\AA$ is the generator of a $C_0-$semigroup of contractions $\left(e^{t\AA}\right)_{t\geq 0}$ on a Hilbert space $H$. If 
\begin{enumerate}
\item[$1.$] $\AA$ has no pure imaginary eigenvalues, 
\item[$2.$] $\sigma(\AA)\cap i\R$ is countable,
\end{enumerate}
where $\sigma(\AA)$ denotes the spectrum of $\AA$, then the $C_0-$semigroup $\left(e^{t\AA}\right)_{t\geq 0}$ is strongly stable. }} 
\end{theoreme}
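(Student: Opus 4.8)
The plan is to invoke the Arendt--Batty Tauberian/stability theorem, so the task here is really to recall why hypotheses (1)--(2) supply exactly the information that theorem consumes. Since $\left(e^{t\AA}\right)_{t\geq 0}$ is a contraction semigroup it is uniformly bounded, and for fixed $x_0\in H$ the orbit $\phi(t):=e^{t\AA}x_0$ is bounded and uniformly continuous on $\R^+$. Its Laplace transform $\widehat\phi(\lambda)=(\lambda-\AA)^{-1}x_0$ is holomorphic on the resolvent set $\rho(\AA)$, which contains $\{\mathrm{Re}\,\lambda>0\}$ and every point of $i\R$ outside $\sigma(\AA)\cap i\R$; hence the spectrum $\mathrm{sp}(\phi)$ of $\phi$ (the set of $\beta\in\R$ for which $\widehat\phi$ admits no holomorphic extension to a neighbourhood of $i\beta$) is contained in $\{\beta\in\R:\ i\beta\in\sigma(\AA)\}$, which is countable by hypothesis (2).

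The abstract input is the following: a bounded, uniformly continuous $H$-valued function $\phi$ on $\R^+$ with countable spectrum $\mathrm{sp}(\phi)$, all of whose Ces\`aro means $M_\beta\phi:=\lim_{T\to\infty}\frac1T\int_0^T e^{-i\beta t}\phi(t)\,dt$ exist and vanish, satisfies $\phi(t)\to 0$ as $t\to+\infty$. It then remains only to check the two conditions on the means for $\phi(t)=e^{t\AA}x_0$. Existence: for each $\beta\in\R$ the family $t\mapsto e^{-i\beta t}e^{t\AA}$ is a bounded $C_0$-semigroup with generator $\AA-i\beta$, and since $H$ is reflexive the mean ergodic theorem gives $M_\beta\phi=P_\beta x_0$, where $P_\beta$ is the projection onto $\ker(\AA-i\beta)$ along $\overline{\mathrm{Ran}(\AA-i\beta)}$. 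Vanishing: hypothesis (1) says $\ker(\AA-i\beta)=\{0\}$ for every $\beta\in\R$, so $P_\beta=0$ and $M_\beta\phi=0$. The Tauberian theorem then yields $\|e^{t\AA}x_0\|_H\to 0$ for all $x_0\in H$, which is the asserted strong stability.

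The main obstacle is the abstract Tauberian step itself --- recovering the decay of a bounded $H$-valued function from holomorphic extendability of its transform across $i\R$ off a countable set. This is the heart of \cite{arendt:88} (and, phrased in terms of the generator, of the Arendt--Batty--Lyubich--V\~{u} theorem); its proof proceeds by a Baire-category decomposition of the exceptional set into its isolated points, a local analysis of the transform near each such point, and a contour/approximation argument, and essentially all of the analysis is concentrated there rather than in the formal verification above. In the present setting this criterion will be applicable once the subsequent sections establish, under $a=1$, $b$ small and the $\textbf{(MGC)}$ condition, that $\AA$ has no purely imaginary eigenvalue and that $\sigma(\AA)\cap i\R$ is countable.
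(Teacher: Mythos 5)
The paper offers no proof of this statement: it is quoted verbatim from \cite{arendt:88} as a known criterion, so there is no internal argument to compare yours against. Your reduction is the standard one and is correct as far as it goes: uniform continuity and boundedness of the orbit follow from the contraction property, the Carleman spectrum of the orbit is contained in $\{\beta:\ i\beta\in\sigma(\AA)\}$ and hence is countable, and the passage from hypothesis $(1)$ to the vanishing of the means $M_\beta\phi$ via the mean ergodic theorem on the reflexive space $H$ is exactly the right way to justify the Hilbert-space form of the hypotheses (in general Banach spaces one must instead assume $\sigma_p(\AA^*)\cap i\R=\emptyset$; reflexivity plus $\ker(\AA-i\beta)=\{0\}$ gives $\overline{\mathrm{Ran}(\AA-i\beta)}=H$, which is the equivalent condition). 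What you have not proved is the one step where all the analysis lives, namely the Tauberian theorem for bounded uniformly continuous vector-valued functions with countable spectrum and vanishing means; you name it and defer to the reference, which is the same thing the paper does for the entire statement, so this is a legitimate citation rather than a gap --- though your one-line gloss of its proof (isolated points of the countable closed exceptional set, local analysis of the transform, contour estimates) is only a rough description and should not be mistaken for an argument. In short: correct and consistent with the paper's treatment, with the irreducible core left, as in the paper, to \cite{arendt:88}.
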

\begin{definition}{\rm{$\textbf{(MGC)}$\label{MGC} We say that the boundary $\Gamma$ satisfies the boundary multiplier geometric control condition $\textbf{(MGC)}$ if there exists $x_0\in \R^d$ and a positive constant $m_0>0$ such that 
$$
m\cdot \nu\leq 0\ \ \text{on}\ \ \Gamma_0\quad \text{and}\quad m\cdot \nu\geq m_0\ \ \text{on}\ \ \Gamma_1
$$
with $m(x)=x-x_0$, for all $x\in \R^d$.}}
\end{definition}
\begin{figure}[h]
\begin{center}
\includegraphics[height=5cm,width=9.5cm]{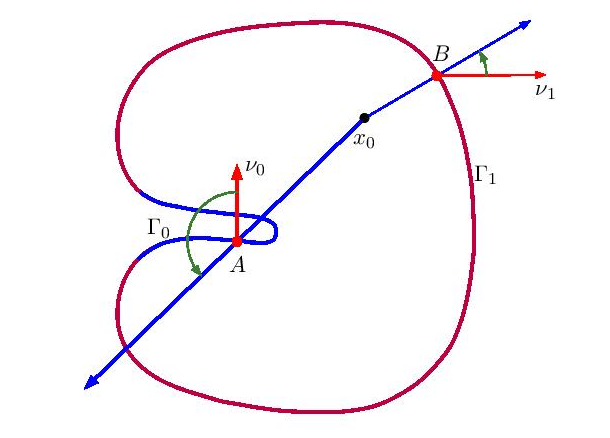}~\includegraphics[height=5cm,width=7cm]{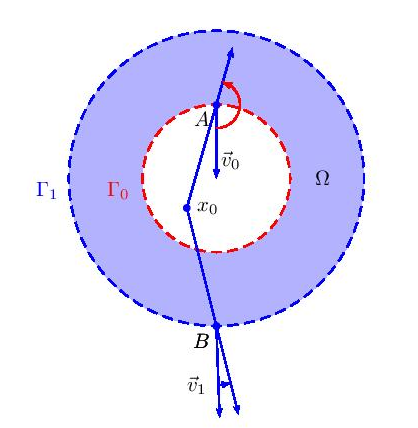}
\caption{Models satisfie ${\rm{\textbf{(MGC)}}}$ condition.}
\end{center}
\end{figure}
\noindent Our main result in this part is the following theorem. 
\begin{theoreme}\label{Strong Stability}
{\rm{Suppose that $\eta\geq 0$, $a=1$, $b$ small enough and $\Gamma$ satisfies \textbf{(MGC)}, then  the $C_0-$semigroup $(e^{t\mathcal{A}})_{t\geq0}$ is strongly stable on the energy space $\mathcal{H}$ in the sense that $\displaystyle{\lim_{t\rightarrow +\infty}\|e^{t\mathcal{A}}U_0\|_{\mathcal{H}}=0}$ for all $U_0 \in \mathcal{H}$.}}
\end{theoreme}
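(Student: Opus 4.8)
The plan is to verify the two hypotheses of the Arendt--Batty criterion (Theorem~\ref{arendtbatty}) for the contraction semigroup $(e^{t\mathcal{A}})_{t\geq0}$ produced by Proposition~\ref{Maximal}: that $\mathcal{A}$ has no purely imaginary eigenvalue, and that $\sigma(\mathcal{A})\cap i\R$ is countable. For the second point it will be enough to show $i\R^{\ast}\subset\rho(\mathcal{A})$ and to inspect the single point $\lambda=0$, after which $\sigma(\mathcal{A})\cap i\R\subseteq\{0\}$ is trivially countable.

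\emph{Absence of imaginary eigenvalues.} Suppose $\mathcal{A}U=i\lambda U$ with $\lambda\in\R$ and $U=(u,v,y,z,\omega)\in D(\mathcal{A})$, $U\neq0$. Taking the real part of $\langle\mathcal{A}U,U\rangle_{\mathcal{H}}=i\lambda\|U\|_{\mathcal{H}}^{2}$ and using the dissipation identity \eqref{dissipatnes} forces $(|\xi|^{2}+\eta)\omega\equiv0$, hence $\omega\equiv0$ (identically if $\eta>0$, and off the null set $\{\xi=0\}$ if $\eta=0$). Inserting $\omega=0$ into the fifth component of $\mathcal{A}U=i\lambda U$ gives $v\,\mu(\xi)=0$ on $\Gamma_{1}$, so $v|_{\Gamma_{1}}=0$ because $\mu>0$; and the boundary condition contained in $D(\mathcal{A})$ gives $\partial_{\nu}u|_{\Gamma_{1}}=0$. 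Using $v=i\lambda u$, $z=i\lambda y$ and $a=1$ in the remaining components, $U$ solves the coupled Helmholtz system $\Delta u+\lambda^{2}u=i\lambda b\,y$, $\Delta y+\lambda^{2}y=-i\lambda b\,u$ in $\Omega$, with $u=y=0$ on $\Gamma$ and $\partial_{\nu}u=0$ on $\Gamma_{1}$. When $\lambda=0$ this degenerates to $\Delta u=0$ with $u=0$ on $\Gamma_{0}$ and $\partial_{\nu}u=0$ on $\Gamma_{1}$, and $\Delta y=0$ with $y=0$ on $\Gamma$; Green's formula then gives $u=y=0$, a contradiction, so $0$ is not an eigenvalue.

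\emph{The main point: $\lambda\neq0$.} Now $v|_{\Gamma_{1}}=i\lambda u|_{\Gamma_{1}}=0$ upgrades the data to $u=0$ on all of $\Gamma$ together with $\partial_{\nu}u=0$ on $\Gamma_{1}$. My intended route is a Rellich--Pohozaev multiplier argument on the coupled system: multiply the two equations by $m\cdot\nabla\bar u$ and $m\cdot\nabla\bar y$ with $m(x)=x-x_{0}$ as in \textbf{(MGC)}, integrate by parts over $\Omega$, and combine with the energy identities coming from the multipliers $\bar u,\bar y$. On $\Gamma_{0}$ the boundary terms carry the favorable sign $m\cdot\nu\leq0$; on $\Gamma_{1}$ the $u$-boundary term drops because $\partial_{\nu}u|_{\Gamma_{1}}=0$, while $m\cdot\nu\geq m_{0}>0$ there; and every coupling contribution is proportional to $b$. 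Treating the system as a perturbation of the decoupled case $b=0$ — in which the overdetermined $u$ is forced to vanish by the Rellich identity and then $y$ vanishes from the first equation — the smallness of $|b|$ should allow one to absorb the cross terms and conclude $u\equiv y\equiv0$, i.e. $U=0$. An equivalent and perhaps cleaner bookkeeping diagonalizes the system via $w=y+iu$, $\tilde w=y-iu$, which solve $\Delta w+(\lambda^{2}+\lambda b)w=0$ and $\Delta\tilde w+(\lambda^{2}-\lambda b)\tilde w=0$ with $w=\tilde w=0$ on $\Gamma$ and $\partial_{\nu}w=\partial_{\nu}\tilde w$ on $\Gamma_{1}$: as soon as one of $\lambda^{2}\pm\lambda b$ fails to be a Dirichlet eigenvalue of $-\Delta$ on $\Omega$ the corresponding profile vanishes, which forces zero Cauchy data on $\Gamma_{1}$ for the other and hence its vanishing by Holmgren's uniqueness theorem; the residual case where both $\lambda^{2}\pm\lambda b$ are Dirichlet eigenvalues is precisely where the constraint $\partial_{\nu}w=\partial_{\nu}\tilde w$ and the smallness of $b$ must be invoked. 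I expect this step — reconciling the multiplier estimate with the missing control on $\partial_{\nu}y|_{\Gamma_{1}}$ and pinning down how small $b$ must be — to be the crux of the whole proof.

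\emph{$i\R^{\ast}\subset\rho(\mathcal{A})$ and conclusion.} For $\lambda\neq0$ I would solve $(i\lambda-\mathcal{A})U=F$ along the lines of Proposition~\ref{Maximal}: the component $\omega$ is eliminated explicitly as $\omega(x,\xi)=\big(f_{5}(x,\xi)+v|_{\Gamma_{1}}\mu(\xi)\big)/(i\lambda+|\xi|^{2}+\eta)$, whose denominator satisfies $\left|i\lambda+|\xi|^{2}+\eta\right|\geq\left|\lambda\right|>0$, so that $\omega$ and $|\xi|\omega$ lie in $L^{2}(\Gamma_{1}\times\R^{d})$ once $u|_{\Gamma_{1}}$ is known; substituting leaves a variational problem for $(u,y)\in H^{1}_{\Gamma_{0}}(\Omega)\times H^{1}_{0}(\Omega)$ whose sesquilinear form is coercive up to a compact perturbation, so that the Fredholm alternative together with the injectivity just established yields a unique solution, hence $i\lambda\in\rho(\mathcal{A})$. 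If $\eta>0$ the same construction also gives $0\in\rho(\mathcal{A})$; if $\eta=0$ the point $\lambda=0$ may belong to $\sigma(\mathcal{A})$ since the explicit $\omega$ then blows up near $\xi=0$, but in either case $\sigma(\mathcal{A})\cap i\R\subseteq\{0\}$ is countable. Combined with the absence of imaginary eigenvalues, Theorem~\ref{arendtbatty} then gives the strong stability of $(e^{t\mathcal{A}})_{t\geq0}$ on $\mathcal{H}$.
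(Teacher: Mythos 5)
Your proposal follows essentially the same route as the paper: Arendt--Batty, a kernel argument for $i\lambda I-\mathcal{A}$ based on the Rellich multiplier $m\cdot\nabla\bar u$ with the coupling absorbed by the smallness of $b$, and surjectivity of $i\lambda I-\mathcal{A}$ on $i\R^{\ast}$ (plus $i\R$ when $\eta>0$) via Lax--Milgram and a Fredholm perturbation, with $0\in\sigma(\mathcal{A})$ when $\eta=0$ exactly as you anticipate. The one step you flag as unresolved --- how to absorb the cross terms and why $\partial_{\nu}y|_{\Gamma_1}$ is not needed --- is closed in the paper by a preliminary identity you do not use: multiplying the $u$-equation by $\bar y$ and the $y$-equation by $\bar u$, adding and taking imaginary parts yields $\int_{\Omega}|u|^{2}dx=\int_{\Omega}|y|^{2}dx$. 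With this in hand the Pohozaev multiplier is applied \emph{only} to the $u$-equation (so the $y$-boundary terms never appear); since $u$ has vanishing Cauchy data on $\Gamma_1$ and $m\cdot\nu\leq0$ on $\Gamma_0$, all boundary terms have a favorable sign, the coupling contribution is estimated by $|\lambda||b|\,\|y\|_{L^2}\bigl(\|u\|_{L^2}+\|\nabla u\|_{L^2}\bigr)$, and Young plus Poincar\'e plus the identity $\|y\|_{L^2}=\|u\|_{L^2}$ reduce everything to $\|u\|_{L^2}^{2}\leq b^{2}(1+C)\bigl(\tfrac{(d-1)^{2}}{4}+\|m\|_{\infty}^{2}\bigr)\|u\|_{L^2}^{2}$, whence $u=0$ and then $y=0$ for $b$ small. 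Your alternative diagonalization $w=y\pm iu$ is not what the paper uses, and its residual case (both $\lambda^{2}\pm\lambda b$ Dirichlet eigenvalues) is precisely what the $L^{2}$-norm identity lets one avoid; with that identity supplied, your argument is complete and matches the paper's.
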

%%%%%%%%%%%%%%%%%%%%%%%%%%%%%%%%%%%%%%%%%%%%%%%%%%%%%%%%%%%%%%%%%%%%%%%%%%%%%%%%%%%%%%%%%%%%%%%%%%%%%%%%%%%%%%%%%%%%%%%%%%%%%%%%%%%%%%%%%%%%%%%%%%%%%%%%%%%%%%%%%%%%%%%%%%%%%%%
\noindent The argument of  Theorem \ref{Strong Stability} relies on the subsequent lemmas.
\begin{lemma}\label{ker} {\rm{Assume that $\eta\geq 0$, $a=1$, $b$ small enough and $\Gamma$ satisfies \textbf{(MGC)}. Then, for all $\lambda\in \R$, we have 
$$\ker(i\lambda I-\mathcal{A})=\{0\}$$ .}} 
\end{lemma}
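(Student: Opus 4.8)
The plan is to argue by contradiction and treat $\lambda=0$ and $\lambda\neq 0$ separately. For $\lambda=0$: reading off the five components of $\mathcal{A}U=0$ for $U=(u,v,y,z,\omega)\in D(\mathcal{A})$ gives $v=z=0$, then $\Delta u=0$, $\Delta y=0$ and $\omega\equiv 0$; the conditions in \eqref{domaine} then give $u=0$ on $\Gamma_0$, $\partial_{\nu}u=0$ on $\Gamma_1$ (since $\omega\equiv0$) and $y=0$ on $\Gamma$, so multiplying $\Delta u=0$ by $\bar u$ and integrating by parts forces $\nabla u\equiv 0$, hence $u\equiv 0$ (as $\Gamma_0\neq\emptyset$), and likewise $y\equiv 0$. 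For $\lambda\neq 0$, suppose $\mathcal{A}U=i\lambda U$. Since $\langle i\lambda U,U\rangle_{\mathcal{H}}$ is purely imaginary, \eqref{dissipatnes} yields $\int_{\Gamma_1}\int_{\R^d}(|\xi|^2+\eta)|\omega|^2\,d\xi\,d\Gamma=0$, hence $\omega\equiv 0$; substituting this into the $\omega$-component of $\mathcal{A}U=i\lambda U$ gives $v|_{\Gamma_1}\mu(\xi)=0$ for a.e.\ $\xi$, so $v|_{\Gamma_1}=0$. The first and third components give $v=i\lambda u$ and $z=i\lambda y$, whence (as $\lambda\neq 0$) $u=0$ on $\Gamma_1$; together with $u\in H^1_{\Gamma_0}(\Omega)$ this gives $u=0$ on all of $\Gamma$, and the boundary condition in \eqref{domaine} with $\omega\equiv0$ gives $\partial_\nu u=0$ on $\Gamma_1$. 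Eliminating $v,z$ and using $a=1$ reduces the whole problem to the coupled Helmholtz system
\[
\Delta u+\lambda^2 u=i\lambda b\,y,\qquad \Delta y+\lambda^2 y=-\,i\lambda b\,u\quad\text{in }\Omega,
\]
with $u=y=0$ on $\Gamma$ and the extra over-determination $\partial_\nu u=0$ on $\Gamma_1$.

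The next step I would take is a cross-energy identity. Testing the first equation against $\bar y$ and the second against $\bar u$, integrating by parts (all boundary terms vanish because $u=y=0$ on $\Gamma$) and comparing the first identity with the complex conjugate of the second, one obtains $\|u\|_{L^2(\Omega)}=\|y\|_{L^2(\Omega)}$; testing each equation against its own unknown then also gives $\|\nabla u\|_{L^2}=\|\nabla y\|_{L^2}$ and $\|\nabla u\|_{L^2}^2\le(\lambda^2+|\lambda b|)\|u\|_{L^2}^2$. Then I would apply a Rellich--Pohozaev multiplier to the $u$-equation: multiply by $2\,m\cdot\nabla\bar u$ with $m(x)=x-x_0$ coming from \textbf{(MGC)} (Definition \ref{MGC}), possibly with a zeroth-order correction $\frac{d-2}{2}\bar u$, take real parts and integrate by parts. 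Because $u=0$ on $\Gamma$, the boundary contribution collapses to $-\int_\Gamma (m\cdot\nu)|\partial_\nu u|^2\,d\Gamma$, and since $\partial_\nu u=0$ on $\Gamma_1$ while $m\cdot\nu\le 0$ on $\Gamma_0$, this term has the favourable sign. Feeding the energy identities above into the resulting Rellich identity, every term still involving $y$ is bounded by $|\lambda||b|$ times a quadratic expression in $u$ (using $\|y\|=\|u\|$), so for $|b|$ small enough — splitting, if needed, into $|\lambda|$ in a bounded range, where one combines the plain $m\cdot\nabla$ multiplier with the Poincar\'e inequality, and $|\lambda|$ large, where one uses the corrected multiplier together with $\|\nabla u\|^2\le(\lambda^2+|\lambda b|)\|u\|^2$ — one forces $u\equiv 0$. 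Finally, $u\equiv 0$ plugged into $\Delta u+\lambda^2 u=i\lambda b\,y$ gives $y\equiv 0$, hence $v=z=\omega=0$ and $U=0$.

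I expect the main obstacle to be exactly this multiplier estimate. The damping reaches only $u$, and only through $\Gamma_1$, so only $u$ inherits over-determined Cauchy data there; $y$ carries merely a homogeneous Dirichlet condition and its normal trace on $\Gamma_1$ is uncontrolled, so one cannot symmetrically run the same argument on $y$. The identity $\|u\|_{L^2}=\|y\|_{L^2}$ — which costs nothing but an integration by parts and the equality of speeds $a=1$ — is what lets the $y$-terms be reabsorbed, and the smallness of $b$ is what makes the resulting inequality close uniformly in $\lambda$. A secondary technical point is elliptic regularity: one needs enough smoothness of $u$ (e.g.\ $u\in H^2(\Omega)$, or a suitable approximation/density argument adapted to the Lipschitz domain and the mixed boundary conditions) to legitimise the boundary integrations by parts in the Rellich identity.
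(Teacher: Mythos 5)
Your proposal is correct and follows essentially the same route as the paper: the same split into $\lambda=0$ and $\lambda\neq 0$, the same use of dissipativity to get $\omega\equiv 0$ and hence the over-determined Cauchy data $u=\partial_\nu u=0$ on $\Gamma_1$, the same cross-multiplication identity $\|u\|_{L^2(\Omega)}=\|y\|_{L^2(\Omega)}$, and the same Rellich-type multiplier $2\,m\cdot\nabla\bar u$ combined with a zeroth-order correction (the paper uses $(1-d)\bar u$, you propose $\tfrac{d-2}{2}\bar u$, which is an inessential variant) together with \textbf{(MGC)}, Young and Poincar\'e to absorb the $y$-terms for $|b|$ small, uniformly in $\lambda$ without the bounded/large-$|\lambda|$ case split you anticipate needing.
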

\begin{proof}
Let $U\in D(\mathcal{A})$ and let $\lambda\in \R$, such that 
\begin{equation}\label{ST2}
		\mathcal{A}U=i\la U.
		\end{equation}
		Equivalently, we have
		\begin{eqnarray}
		v&=&i\la u,\label{st4}\\
		\Delta u-bz&=&i\la v,\label{st5}\\
		z&=&i\la y,\label{st6}\\
		\Delta y+bv&=&i\la z,\label{st7}\\
		-\left(|\xi|^2+\eta\right)\omega+v|_{\Gamma_1}\mu(\xi)&=&i\la\omega.\label{st8}
		\end{eqnarray}
		Next, a straightforward computation gives  
		\begin{equation}\label{ST3}
		\Re\left(\left<\mathcal{A}U,U\right>_{\mathcal{H}}\right)=-\gamma\kappa(\alpha,d)\int_{\Gamma_1}\int_{\R^d}\left(|\xi|^2+\eta\right)|\omega|^2d\xi d\Gamma.
		\end{equation}
		Then using equation \eqref{ST2} and \eqref{ST3} we deduce that 
		\begin{equation}\label{chap3eq9}
		\omega=0\quad \text{a.e.}\ \text{in}\ \ \Gamma_1\times\R^d.
		\end{equation} 
		%Using equations \eqref{aug6}, \eqref{ST3} and \eqref{st8} we get 
		It follows, from \eqref{domaine} and \eqref{st8}, that
		\begin{equation}\label{ST4}
		\dfrac{\partial u}{\partial\nu}=0\quad\text{and}\quad v=0\quad\text{on}\quad \Gamma_1.
		\end{equation}
		If $\la =0$, then $v=z=0$, then we obtain the following systems 
		\begin{equation}\label{ST9}
		\left\{\begin{array}{rllll}
		\Delta u&=&0&\text{in}&\Omega\times \R^{+},\\
		u&=&0&\text{on}&\Gamma_0\times \R^+,\\
		\dfrac{\partial u}{\partial\nu}&=&0&\text{on}&\Gamma_1\times\R^+,
		\end{array}
		\right.
		\end{equation}
		and 
		\begin{equation}\label{ST10}
		\left\{\begin{array}{rllll}
		\Delta y&=&0&\text{in}&\Omega\times \R^+,\\
		y&=&0&\text{on}&\Gamma\times \R^+.
		\end{array}\right.
		\end{equation}
		It clear that problems \eqref{ST9} and \eqref{ST10} have a unique solution $u=0$ and $y=0$ respectively, then $U=0$, hence we get $$\ker(-\mathcal{A})=\{0\}.$$
	Otherwise,	if $\la \neq0$, then using equation \eqref{ST4} and \eqref{st4}, we get 
		\begin{equation}\label{ST11}
		u=0\quad\text{on}\quad\Gamma_1.
		\end{equation}
		Eliminating $v$ and $z$ in equations \eqref{st4} and \eqref{st6} in equations \eqref{st5} and \eqref{st7}, we obtain the following system
		\begin{eqnarray}
		\la^2u+\Delta u-i\la by&=&0\ \ \ \text{in}\quad \Omega,\label{ST12}\\
		\la^2y+\Delta y+i\la bu&=&0\ \ \ \text{in}\quad \Omega,\label{ST13}\\
		u&=&0\ \ \ \text{on}\quad \Gamma\label{ST14}\\
		\frac{\partial u}{\partial\nu}&=&0\ \ \text{on}\quad \Gamma_1,\label{ST15}\\
		y&=&0\ \ \ \text{on}\quad \Gamma.\label{ST16}
		\end{eqnarray}
		We divide the proof into several steps.\\
		\textbf{Step 1}. Multiplying equations \eqref{ST12} and \eqref{ST13} by $\bar{y}$ and $\bar{u}$ respectively, integrate over $\Omega$, then using Green formula and the boundary  conditions, we get
		\begin{eqnarray}
		\la^2\int_{\Omega}u\bar{y}dx-\int_{\Omega}\nabla u\cdot\nabla\bar{y}dx-i\la b\int_{\Omega}|y|^2dx&=&0,\label{ST17}\\
		\la^2\int_{\Omega}y\bar{u}dx-\int_{\Omega}\nabla y\cdot\nabla\bar{u}dx+ i\la b\int_{\Omega}|u|^2dx&=&0.\label{ST18}
		\end{eqnarray}
		Adding equations \eqref{ST17} and \eqref{ST18}, then taking the imaginary part, we obtain 
		\begin{equation}\label{ST19}
		\int_{\Omega}|u|^2dx=\int_{\Omega}|y|^2dx.
		\end{equation}
		\textbf{Step 2.} Multiplying equation \eqref{ST12} by $(1-d)\,\bar{u}$, integrate over $\Omega$,  then using green formula and the boundary conditions, we get 
		\begin{equation}\label{ST20}
		\la^2(1-d)\int_{\Omega}|u|^2dx-(1-d)\int_{\Omega}|\nabla u|^2dx=\la b(1-d)\, \Im\left(\int_{\Omega}y\bar{u}dx\right),
		\end{equation}
		where $\Im$ stands for the imaginary part of a complex number.\\
		\textbf{Step 3.} Multiplying equation \eqref{ST12} by $2(m\cdot \nabla\bar{u})$, integrate over $\Omega$, we get 
		\begin{equation}\label{ST21}
		2\la^2\int_{\Omega}u(m\cdot\nabla\bar{u})dx+2\int_{\Omega}\Delta u(m\cdot \nabla\bar{u})dx=2i\la b\int_{\Omega}y(m\cdot \nabla\bar{u})dx.
		\end{equation}
	Since	$U\in D(\mathcal{A})$, then the regularity is sufficiently for applying an integration on the second integral in the left hand said in equation \eqref{ST21}. Then, we have 
		\begin{equation}\label{ST22}
		2\int_{\Omega}\Delta u(m\cdot \nabla\bar{u})dx=-2\int_{\Omega}\nabla u\cdot\nabla\left(m\cdot\nabla\bar{u}\right)dx+2\int_{\Gamma}\frac{\partial u}{\partial\nu}(m\cdot\nabla\bar{u})d\Gamma.
		\end{equation}
		Using the green formula, we get  
		\begin{equation}\label{ST23}
		-2\Re\left(\int_{\Omega}\nabla u\cdot\nabla(m\cdot\nabla\bar{u})dx\right)=(d-2)\int_{\Omega}|\nabla u|^2dx-\int_{\Gamma}(m\cdot\nu)|\nabla u|^2dx.
		\end{equation}
		Inserting equation \eqref{ST23} in equation \eqref{ST22}, then using equations \eqref{ST14} and \eqref{ST15}, we get 
		\begin{equation}\label{ST24}
		2\Re\left(\int_{\Omega}\Delta u(m\cdot\nabla\bar{u})dx\right)=(d-2)\int_{\Omega}|\nabla u|^2dx+\int_{\Gamma_0}(m\cdot\nu)\left|\frac{\partial u}{\partial\nu}\right|^2d\Gamma.
		\end{equation}
		On the other hand, it easy to see that 
		\begin{equation}\label{ST25}
		2\la^2\int_{\Omega}u(m\cdot\nabla\bar{u})dx=-d\la^2\int_{\Omega}|u|^2dx.
		\end{equation}
		Inserting equations \eqref{ST24} and \eqref{ST25} in equation \eqref{ST21}, we get 
		\begin{equation}\label{ST26}
		d\la^2\int_{\Omega}|u|^2dx+(2-d)\int_{\Omega}|\nabla u|^2dx-\int_{\Gamma_0}(m\cdot\nu)\left|\frac{\partial u}{\partial\nu}\right|^2d\Gamma=2\la b\Im\left(\int_{\Omega}y(m\cdot \nabla\bar{u})dx\right).
		\end{equation}
		Adding Equations \eqref{ST20} and  \eqref{ST26}, we get
		\begin{equation}\label{ST27}
		\la^2\int_{\Omega}|u|^2dx+\int_{\Omega}|\nabla u|^2dx-\int_{\Gamma_0}(m\cdot\nu)\left|\frac{\partial u}{\partial\nu}\right|^2d\Gamma=\la b\Im\left(\int_{\Omega}y((d-1)\bar{u}+2(m\cdot\nabla\bar{u}))dx\right).
		\end{equation}
		Using Cauchy-Shwartz inequality in the right hand side equation \eqref{ST27}, then for $\varepsilon>0$, we get 
		\begin{equation}\label{ST28}
		\begin{split}
		\displaystyle{\la^2\int_{\Omega}|u|^2dx+\int_{\Omega}|\nabla u|^2dx-\int_{\Gamma_0}(m\cdot\nu)\left|\frac{\partial u}{\partial\nu}\right|^2d\Gamma}\leq
		\displaystyle{(d-1)|\la||b|\left(\int_{\Omega}\frac{|y|^2}{2\varepsilon}dx\right)^{\frac{1}{2}}\left(\int_{\Omega}2\varepsilon|u|^2dx\right)^{\frac{1}{2}}}\\
		+2\|m\|_{\infty}|\la||b|\left(\int_{\Omega}\frac{|y|^2}{\varepsilon}dx\right)^{\frac{1}{2}}\left(\int_{\Omega}\varepsilon|\nabla u|^2dx\right)^{\frac{1}{2}}.
		\end{split}
		\end{equation}
		Using Young inequality and  Poincar\'e inequality, we get 
		\begin{equation}\label{ST29}
		\begin{split}
		\la^2\int_{\Omega}|u|^2dx+\int_{\Omega}|\nabla u|^2dx-\int_{\Gamma_0}(m\cdot\nu)\left|\frac{\partial u}{\partial\nu}\right|^2\leq \left(\frac{(d-1)^2|\la|^2|b|^2}{4\varepsilon}+\frac{\|m\|_{\infty}|\la|^2|b|^2}{\varepsilon}\right)\int_{\Omega}|y|^2dx\\
		+\varepsilon(1+C)\int_{\Omega}|\nabla u|^2dx
		\end{split}
		\end{equation}
		where $C=\frac{1}{\alpha}$ and $\alpha$ is the smallest eigen value of $-\Delta$ in $H_0^1(\Omega)$.
		Now, using equation \eqref{ST19} and the boundary geometric condition defined in Definition \ref{MGC}, we get 
		\begin{equation}\label{ST30}
		\la^2\int_{\Omega}|u|^2dx+\int_{\Omega}|\nabla u|^2dx\leq \la^2b^2\left(\frac{(N-1)^2}{4\varepsilon}+\frac{\|m\|_{\infty}^2}{\varepsilon}\right)\int_{\Omega}|y|^2dx+\varepsilon(1+C)\int_{\Omega}|\nabla u|^2dx.
		\end{equation}
		Tacking $\varepsilon=\frac{1}{1+C}$ in equation \eqref{ST30}, then using equation \eqref{ST19}, we get 
		\begin{equation}\label{ST31}
		\int_{\Omega}|u|^2dx\leq b^2(1+C)\left(\frac{(d-1)^2}{4}+\|m\|_{\infty}^2\right)\int_{\Omega}|u|^2dx.
		\end{equation}
		Using the fact $b$ is small enough, we obtain 
		\begin{equation}\label{ST32}
		\int_{\Omega}|u|^2dx=0,
		\end{equation}
		such that  $b$ satisfy 
		\begin{equation}\label{ST33}
		1-b^2(1+C)\left(\frac{(d-1)^2}{4}+\|m\|_{\infty}^2\right)>0.
		\end{equation}
		Finally, combining equations \eqref{ST19} and \eqref{ST32}, we deduce that
		\begin{equation}\label{ST34}
		u=y=0,
		\end{equation}
		using equation \eqref{st4}, \eqref{st6} and equation \eqref{ST34}, we obtain 
		\begin{equation}\label{ST35}
		v=z=0.
		\end{equation}
		Consequently, using equation \eqref{ST3}, \eqref{ST34} and \eqref{ST35}, we obtain $U=0$, therefore $$\ker(i\lambda I-\mathcal{A})=\{0\},\quad \forall \lambda\neq0.$$  The proof has been completed.
\end{proof}
%%%%%%%%%%%%%%%%%%%%%%%%%%%%%%%%%%%%%%%%%%%%%%%%%%%%%%%%%%%%%%%%%%%%%%%%%%%%%%%%%%%%%%%%%%%%%%%%%%%%%%%%%%%%%%%%%%%%%%%%%%%%%%%%%%%%%%%%%%%%%%%%%%%%%%%%%%%%%%%%%%%%%%%%%%%%%%%%%%%%%%%%%%%%%%%%%%%%%%%%%%%%%%%%%%%%%%%%%%%%%%%%%%%%%%%%%%%%%%%%%%%%%%%%%%%%%%%%%%%%%%%%%%%%%%%%%%%%%%%%%%%%%%%%%%%%%%%%%%%%%%%%%%%%%%%%%%%%%%%%%%%%%%%%%%%%%%%%%%%%%%%%%%%%%%%%%%%%%%%%%%%%%%%%%%%%%%%%%%%%%%%%%%%%%%%%%%%%%%%%%%%%%%%%%%%%%%%%%%%%%%%%%%%%%%%%%%%%%%%%
\begin{lemma}\label{surjec}
Assume that $\eta=0$. Then, the operator $-\mathcal{A}$ is not invertible and consequently $0\in \sigma(\mathcal{A})$.	
\end{lemma}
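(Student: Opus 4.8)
The plan is to show that $-\mathcal{A}$ is already not surjective; since $0$ would lie in the resolvent set only if $0\cdot I-\mathcal{A}=-\mathcal{A}$ were a bijection of $D(\mathcal{A})$ onto $\mathcal{H}$, non-surjectivity forces $0\in\sigma(\mathcal{A})$. The entire difficulty is localized in the last component of $\mathcal{A}$: when $\eta=0$ the symbol $|\xi|^{2}+\eta$ degenerates to $|\xi|^{2}$, which vanishes at $\xi=0$, so recovering $\omega$ from the fifth equation amounts to a division by $|\xi|^{2}$ that destroys square-integrability near the origin for an appropriately chosen right-hand side. For $\eta>0$ this does not occur, since $(|\xi|^{2}+\eta)^{-1}$ is bounded; this is precisely why the statement is special to $\eta=0$.

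Concretely, I would argue by contradiction. Assume $-\mathcal{A}$ is surjective and test it against $F=(0,0,0,0,f_{5})\in\mathcal{H}$ with $f_{5}(x,\xi)=\rho(\xi)$, where $\rho\in L^{2}(\R^{d})$ is chosen so that $\rho(\xi)/|\xi|^{2}\notin L^{2}(\R^{d})$; a convenient choice is $\rho(\xi)=|\xi|^{1-\frac{d}{2}}\mathbf{1}_{\{|\xi|\le 1\}}$, for which, in polar coordinates, $\int_{\R^{d}}|\rho(\xi)|^{2}\,d\xi=c_{d}\int_{0}^{1}r\,dr<+\infty$ while $\int_{\R^{d}}|\rho(\xi)|^{2}|\xi|^{-4}\,d\xi=c_{d}\int_{0}^{1}r^{-3}\,dr=+\infty$, with $c_{d}>0$ the area of the unit sphere of $\R^{d}$. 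By the assumed surjectivity there would exist $U=(u,v,y,z,\omega)\in D(\mathcal{A})$ with $\mathcal{A}U=-F$. The first component of this identity gives $v=0$, hence $v|_{\Gamma_{1}}=0$; the fifth component, using $\eta=0$, then reads $-|\xi|^{2}\omega(x,\xi)=-\rho(\xi)$, so that $\omega(x,\xi)=\rho(\xi)/|\xi|^{2}$ for a.e.\ $(x,\xi)\in\Gamma_{1}\times\R^{d}$ (we used $|\xi|^{2}>0$ for a.e.\ $\xi$). But then
$$
\|\omega\|_{L^{2}(\Gamma_{1}\times\R^{d})}^{2}=|\Gamma_{1}|\int_{\R^{d}}\frac{|\rho(\xi)|^{2}}{|\xi|^{4}}\,d\xi=+\infty,
$$
which contradicts the requirement $\omega\in L^{2}(\Gamma_{1}\times\R^{d})$ built into $D(\mathcal{A})\subset\mathcal{H}$. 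Hence no such $U$ exists, $-\mathcal{A}$ is not surjective, so it is not invertible, and therefore $0\in\sigma(\mathcal{A})$.

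The only step that requires any care is the choice of $\rho$ together with the verification of the two elementary radial integrals above; everything else is a direct reading of the component equations defining $\mathcal{A}$. In particular, there is no genuine analytic obstacle here — the content of the lemma is the structural observation that the degeneracy of $|\xi|^{2}+\eta$ at $\xi=0$ when $\eta=0$ prevents $0$ from belonging to the resolvent set. One could, if desired, additionally note that $-\mathcal{A}$ is injective (this is the $\lambda=0$ case of Lemma \ref{ker}, under the extra hypotheses assumed there), but injectivity plays no role in the present conclusion.
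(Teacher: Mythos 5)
Your proof is correct, and it establishes exactly what is needed: non-surjectivity of $-\mathcal{A}$, hence $0\in\sigma(\mathcal{A})$. The underlying mechanism is the same as in the paper --- the degeneracy of $|\xi|^{2}+\eta$ at $\xi=0$ when $\eta=0$ forces any preimage to have a fifth component $\omega$ behaving like $|\xi|^{-2}$ times something near the origin, which is not square-integrable --- but your choice of witness is different. The paper takes $F=(\varphi_k,0,0,0,0)$ with $\varphi_k$ an eigenfunction of $-\Delta$ with mixed Dirichlet--Neumann boundary conditions; the first component equation then forces $v=-\varphi_k$ and the fifth forces $\omega=-\varphi_k|_{\Gamma_1}\,|\xi|^{\frac{2\alpha-d-4}{2}}$, whose radial part diverges at the origin since $\int_0^1 r^{2\alpha-5}\,dr=+\infty$. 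That argument, however, silently requires $\varphi_k|_{\Gamma_1}\not\equiv 0$ (otherwise $\omega\equiv 0$ is a perfectly good solution of the fifth equation), a point the paper does not address and which would need a unique continuation argument ($\varphi_k$ and $\partial_\nu\varphi_k$ both vanishing on the open set $\Gamma_1$ would force $\varphi_k\equiv 0$). Your choice $F=(0,0,0,0,\rho)$ with $\rho(\xi)=|\xi|^{1-\frac d2}\mathbf{1}_{\{|\xi|\le 1\}}$ places the obstruction directly in the fifth slot, makes the two radial integrals completely explicit, and avoids any trace or unique continuation issue. So your route is, if anything, slightly cleaner and more self-contained than the paper's; both are equally short.
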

\begin{proof}
	First, let $\varphi_{k}\in H_{\Gamma_0}^1(\Omega)$  be an eigenfunction  of the following problem
	\begin{equation}\label{EVPS}
	\left\{\begin{array}{lllll}
	-\Delta\varphi_k&=&\mu_k^2\varphi_k&\text{in}&\Omega,\\
	\varphi_k&=&0,&\text{on}&\Gamma_0,\\
	\frac{\partial \varphi_k}{\partial\nu}&=&0&\text{on}&\Gamma_1.
	\end{array}
	\right.
	%\quad \text{and}\quad \left\{\begin{array}{lllll}
	%-\Delta\psi_k&=&0&\text{in}&\Omega,\\
	%\varphi_k&=&0,&\text{on}&\Gamma_0,\\
	%\frac{\partial \varphi_k}{\partial\nu}&=&0&\text{on}&\Gamma_1.
	%\end{array}
	%\right.
	\end{equation}
	Next, define the vector $F=\left(\varphi_k,0,0,0,0\right)\in \mathcal{H}$. Assume that there exists $U=(u,v,y,z,\omega)\in D(\mathcal{A})$ such that 
	$$
	-\mathcal{A}U=F.
	$$
	It follows that 
	\begin{equation}\label{uyeq}
	\left\{\begin{array}{lll}
	v=-\varphi_k,&\text{in}&\Omega\\
	z=0&\text{in}&\Omega,\\
	-|\xi|^2\omega+\mu(\xi)v=0&\text{on}&\Gamma_1,
	\end{array}
	\right.
	\end{equation}
	and 
	\begin{equation}\label{uyevp1}
	\left\{\begin{array}{lllll}
	\Delta u&=&0,&\text{in}&\Omega,\\
	\Delta y&=&0,&\text{in}&\Omega,\\
	u&=&0,&\text{on}&\Gamma_0,\\
	\displaystyle
	\frac{\partial u}{\partial\nu}+\gamma\kappa(\alpha,d)\int_{\R^d}\mu(\xi)\omega(\cdot,\xi)d\xi&=&0,&\text{on}&\Gamma_1,\\
	y&=&0,&\text{on}&\Gamma.
	\end{array}
	\right.
	\end{equation}
	From \eqref{uyeq}, we deduce that $\omega(x,\xi)=|\xi|^{\frac{2\alpha-d-4}{2}}\varphi_k|_{\Gamma_1}$. We easy can check that, for $\alpha\in ]0,1[$, the function $\omega(x,\xi)\notin L^2(\Gamma_1\times \R^d)$. So, the assumption of the existence of $U$ is false and consequently the operator $-\mathcal{A}$ is not invertible.
	\end{proof}

\begin{lemma}\label{surjec2}
Assume that $a=1$, $b$ small enough, $\Gamma$ satisfies boundary geometric control condition defined in Definition \ref{MGC}. If   ($\eta>0$ and $\la\in \R$) or ($\eta=0$  and $\lambda\in \R^{\ast}$),  then $i\la I-\mathcal{A}$ is surjective. 
\end{lemma}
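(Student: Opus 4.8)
The plan is to solve the resolvent equation $(i\lambda I - \mathcal{A})U = F$ for arbitrary $F = (f_1,f_2,f_3,f_4,f_5) \in \mathcal{H}$ by reducing it, via elimination of $v$, $z$ and $\omega$, to an elliptic system for $(u,y)$ on $\Omega$, and then treating that system by a Fredholm-alternative argument. First I would write out the component equations: $i\lambda u - v = f_1$, $i\lambda v - \Delta u + bz = f_2$, $i\lambda y - z = f_3$, $i\lambda z - a\Delta y - bv = f_4$, and $i\lambda \omega + (|\xi|^2+\eta)\omega - v|_{\Gamma_1}\mu(\xi) = f_5$. From the first and third I substitute $v = i\lambda u - f_1$ and $z = i\lambda y - f_3$; from the last I solve $\omega(x,\xi) = \dfrac{v|_{\Gamma_1}\mu(\xi) + f_5(x,\xi)}{i\lambda + |\xi|^2 + \eta}$. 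Plugging these back yields the coupled system
\begin{equation*}
-\lambda^2 u - \Delta u + i\lambda b y = F_1, \qquad -\lambda^2 y - a\Delta y - i\lambda b u = F_2 \quad \text{in } \Omega,
\end{equation*}
with boundary conditions $u = 0$ on $\Gamma_0$, $y = 0$ on $\Gamma$, and on $\Gamma_1$ the nonlocal condition $\partial_\nu u + \mathtt{I}_1(\lambda,\eta,\alpha)\,u = $ (explicit data terms built from $f_1$ and $f_5$), where $\mathtt{I}_1(\lambda,\eta,\alpha) = i\lambda\gamma\kappa(\alpha,d)\int_{\R^d}\frac{\mu^2(\xi)}{i\lambda+|\xi|^2+\eta}\,d\xi$. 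Here $F_1 = f_2 + i\lambda f_1 + bf_3$ and $F_2 = f_4 + i\lambda f_3 - bf_1$.

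The next step is to check that all the $\xi$-integrals appearing are finite: under the hypotheses ($\eta>0$, $\lambda\in\R$) or ($\eta=0$, $\lambda\neq 0$) the denominator $i\lambda+|\xi|^2+\eta$ never vanishes, and since $\mu(\xi)^2 = |\xi|^{2\alpha-d}$ the integrals $\int_{\R^d}\frac{\mu^2(\xi)}{i\lambda+|\xi|^2+\eta}\,d\xi$ and $\int_{\R^d}\frac{\mu^2(\xi)}{|i\lambda+|\xi|^2+\eta|^2}\,d\xi$ converge (as in the estimate \eqref{otb} and Lemma \ref{Appendix1}), so $\mathtt{I}_1$ and the data functionals are well defined; the trace theorem then makes the corresponding linear form continuous on $H^1_{\Gamma_0}(\Omega)\times H^1_0(\Omega)$. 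I then cast the elliptic system in weak form: multiply by test functions $(\varphi,\psi) \in H^1_{\Gamma_0}(\Omega)\times H^1_0(\Omega)$, integrate by parts, and obtain an identity $(\mathcal{B}+\mathcal{R})((u,y),(\varphi,\psi)) = L(\varphi,\psi)$, where $\mathcal{B}$ collects the principal ($H^1$-coercive) part together with the boundary term $\mathtt{I}_1\int_{\Gamma_1}u\bar\varphi\,d\Gamma$ — coercive because $\Re(\mathtt{I}_1) \geq 0$ — and $\mathcal{R}$ collects the lower-order terms $-\lambda^2\int_\Omega(u\bar\varphi + y\bar\psi) + i\lambda b\int_\Omega(y\bar\varphi - u\bar\psi)$, which is compact by the Rellich–Kondrachov embedding. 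Hence $\mathcal{B}+\mathcal{R}$ is a Fredholm operator of index zero.

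By the Fredholm alternative, surjectivity of $i\lambda I - \mathcal{A}$ follows once injectivity of $\mathcal{B}+\mathcal{R}$ is established — equivalently, that the homogeneous problem has only the trivial solution. This is exactly the content of Lemma \ref{ker}: the kernel of $i\lambda I - \mathcal{A}$ is trivial for all real $\lambda$ (using $a=1$, $b$ small, and \textbf{(MGC)}), and tracing the equivalence backwards a nontrivial solution of the homogeneous elliptic system would produce a nontrivial element of $\ker(i\lambda I - \mathcal{A})$. Therefore $\mathcal{B}+\mathcal{R}$ is injective, hence bijective, Lax–Milgram/elliptic regularity gives a unique $(u,y) \in H^1_{\Gamma_0}(\Omega)\times(H^2(\Omega)\cap H^1_0(\Omega))$ with $\Delta u \in L^2(\Omega)$, and defining $v$, $z$, $\omega$ by the substitution formulas above — together with the verification that $\omega$ and $|\xi|\omega$ lie in $L^2(\Gamma_1\times\R^d)$, done exactly as in the proof of Proposition \ref{Maximal} — yields $U \in D(\mathcal{A})$ solving $(i\lambda I - \mathcal{A})U = F$. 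The main obstacle I anticipate is making the $\eta=0$, $\lambda\neq 0$ case of the $\xi$-integral estimates clean: near $\xi = 0$ the factor $|\xi|^{2\alpha-d}$ must be integrated against $1/|i\lambda + |\xi|^2|$, which is bounded there, while the large-$\xi$ decay is as before — so one must split the integral and treat the two regimes separately, but no new idea beyond Lemma \ref{Appendix1} is needed.
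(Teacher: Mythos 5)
Your proposal follows essentially the same route as the paper: eliminate $v$, $z$, $\omega$ to get the reduced elliptic system with the nonlocal boundary term $\mathtt{I}_1(\lambda,\eta,\alpha)$, split the weak form into a coercive part plus a compact perturbation to obtain a Fredholm operator of index zero, and reduce injectivity to Lemma \ref{ker} by lifting a kernel element of the reduced problem back to an element of $\ker(i\lambda I-\mathcal{A})$ (the paper does this lift by first showing $\tilde u=0$ on $\Gamma_1$ via the imaginary part of the quadratic form, which is the detail your "tracing the equivalence backwards" glosses over but which your verification of $\omega,|\xi|\omega\in L^2(\Gamma_1\times\R^d)$ covers). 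The only presentational difference is that the paper treats the case $\eta>0$, $\lambda=0$ separately by direct Lax--Milgram, which your unified argument also handles.
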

\begin{proof}
Let $F=(f_1,f_2,f_3,f_4,f_5)\in \mathcal{H}$, we look for $U=(u,v,y,z,\omega)\in D(\mathcal{A})$ solution of 
\begin{equation}\label{chap3eq11}
(i\la -\mathcal{A})U=F.
\end{equation}  
Equivalently, we have 
$$
\left\{\begin{array}{lllll}
i\la u-v&=&f_1,&\text{in}&\Omega,\\
i\la v-\Delta u+bz&=&f_2,&\text{in}&\Omega,\\
i\la y-\Delta z&=&f_3,&\text{in}&\Omega,\\
i\la z-\Delta y-bv&=&f_4,&\text{in}&\Omega,\\
i\la \omega+(|\xi|^2+\eta)\omega-v\mu(\xi)&=&f_5,&\text{on}&\Gamma_1,
\end{array}
\right.
$$
as before, by eliminating $v,z$ and $\omega$ from the above system and using the fact that 
$$
\partial_{\nu}u+\gamma k(\alpha,d)\int_{\R^d}\mu(\xi)\omega(x,\xi)d\xi=0\quad \text{on}\quad \Gamma_1,
$$
we get the following system:
\begin{equation}\label{im9}
\left\{\begin{array}{lllll}
\vspace{0.15cm}-\la^2u-\Delta u+i\la by&=&f_2+i\la f_1+bf_3&\text{in}&\Omega,\\
\vspace{0.15cm}-\la^2y-\Delta y-i\la bu&=&f_4+i\la f_3-bf_1&\text{in}&\Omega,\\
\vspace{0.15cm}u&=&0&\text{on}&\Gamma_0,\\
\vspace{0.15cm}y&=&0&\text{on}&\Gamma,\\
\vspace{0.15cm}\displaystyle
\frac{\partial u}{\partial \nu}+\mathtt{I}_1(\la,\eta,\alpha)u&=&\mathtt{I}_2(\la,\eta,\alpha)f_1-\mathtt{I}_3(\la,\eta,\alpha)&\text{on}&\Gamma_1,
\end{array}\right.
\end{equation}
where
\begin{equation}\label{I1I2I3}
\left\{\begin{array}{l}
\displaystyle{\mathtt{I}_1(\lambda,\eta,\alpha)=i\lambda \gamma\kappa(\alpha,d)\int_{\R^d}\frac{\mu^2(\xi)}{i\la +\abs{\xi}^2+\eta}d\xi,
}\\[0.1in]
\displaystyle{\mathtt{I}_2(\la,\eta,\alpha)=\gamma k(\alpha,d)\int_{\R^d}\frac{\mu^2(\xi)}{i\la+\abs{\xi}^2+\eta}d\xi},\\[0.1in]
\displaystyle{\mathtt{I}_3(\lambda,\eta,\alpha)=\gamma k(\alpha,d)\int_{\R^d}\frac{\mu(\xi)f_5(x,\xi)}{i\la+\abs{\xi}^2+\eta}d\xi.
}
\end{array}
\right.
\end{equation}
Since $\alpha\in ]0,1[$ and $f_5\in L^2(\Gamma_1\times \R^d)$ in both cases if ($\eta>0$ and $\la\in \R$) or ($\eta=0$ and $\la\in \R^{\ast}$), it is easy to check that 
$$
\abs{\mathtt{I}_1(\lambda,\eta,\alpha)}<\infty,\quad \abs{\mathtt{I}_2(\lambda,\eta,\alpha)}<\infty,\quad \Re\left(\mathtt{I}_1(\lambda,\eta,\alpha)\right)>0\quad \text{and}\quad \int_{\Gamma_1}\mathtt{I}_3(\la,\eta,\alpha)d\Gamma<\infty. 
$$
\textbf{Case 1.} If $\eta>0$ and $\la=0$, then Problem \eqref{im9} becomes 
\begin{equation}\label{newlauyhg}
\left\{\begin{array}{lllll}
-\Delta u&=&f_2+bf_3,&\text{in}&\Omega,\\[0.1in]
-\Delta y&=&f_4-bf_1,&\text{in}&\Omega,\\[0.1in]
u&=&0,&\text{on}&\Gamma_0,\\[0.1in]
\displaystyle{\frac{\partial u}{\partial \nu}}&=&\mathtt{I}_2(0,\eta,\alpha)f_1-I_3(0,\eta,\alpha),&\text{on}&\Gamma_1,\\[0.1in]
y&=&0,&\text{on}&\Gamma.
\end{array}
\right.
\end{equation}
Let $\left(\varphi,\psi\right)\in H_{\Gamma_0}^1(\Omega)\times H_0^1(\Omega)$. Multiplying the first and the second equations of \eqref{newlauyhg} by $\bar{\varphi}$ and $\bar{\psi}$ respectively, integrating in $\Omega$ and taking the sum, then using by parts integration , we get 
\begin{equation}\label{eq1case1}
\int_{\Omega}\nabla u\cdot \nabla\bar{\varphi}dx+\int_{\Omega}\nabla y\cdot\nabla\bar{\psi}dx=\int_{\Omega}(f_2+bf_3)\bar{\phi}dx+\int_{\Omega}(f_4-bf_1)\bar{\psi}dx+\int_{\Gamma}\left(\mathtt{I_2}(0,\eta,\alpha)f_1-\mathtt{I}_3(0,\eta,\alpha)\right)\bar{\varphi}d\Gamma
%\mathtt{I}_2(0,\eta,\alpha)\int_{\Gamma_1}f_1\bar{\varphi}d\Gamma-\int_{\Gamma_1}\mathtt{I}_3(0,\eta,\alpha)\bar{\varphi}d\Gamma.
\end{equation}
The left hand side of \eqref{eq1case1} is a bilinear continuous coercive form on $\left(H_{\Gamma_0}^1(\Omega)\times H_0^1(\Omega)\right)^2$, and the right hand side of equation \eqref{eq1case1} is a linear continuous form on $H_{\Gamma_0}^1(\Omega)\times H_0^1(\Omega)$. Using Lax-Milgram theorem, we deduce that there exists a unique solution $(u,y)\in H_{\Gamma_0}^1(\Omega)\times H_0^1(\Omega)$. Thus, defining $v=-f_1$, $z=-f_3$ and 
$$
\omega(x,\xi):=\frac{f_5(x,\xi)}{\abs{\xi}^2+\eta}-\frac{f_1\mu(\xi)}{\abs{\xi}^2+\eta},
$$
and using the classical regularity arguments, we conclude that equation \eqref{chap3eq11} admits a unique solution $U\in D(\mathcal{A})$.
\\
\textbf{Case 2.} If $\eta\geq 0$ and $\la\in \R^{\ast}$. Let $(\varphi,\psi)\in H_{\Gamma_0}^1(\Omega)\times H_0^1(\Omega)$. Multiplying the first and the second equation of  system \eqref{im9} by $\bar{\varphi}$ and $\bar{\psi}$ respectively and integrate over $\Omega$, we get   
\begin{equation}\label{NewNew1}
\begin{split}
-\la^2\int_{\Omega}u\bar{\varphi}dx+\int_{\Omega}\nabla u\cdot \nabla\bar{\varphi}dx+\mathtt{I}_1(\la,\eta,\alpha)\int_{\Gamma_1}u\bar{\varphi}d\Gamma+i\la b\int_{\Omega}y\bar{\varphi}dx=\int_{\Omega}(i\la f_1+f_2+bf_3)\bar{\varphi}dx\\
+\mathtt{I}_2(\la,\eta,\alpha)\int_{\Gamma_1}f_1\bar{\varphi}d\Gamma-\int_{\Gamma_1}\mathtt{I}_3(\la,\eta,\alpha)\bar{\varphi}d\Gamma
\end{split}
\end{equation}
and 
\begin{equation}\label{NewNew2}
-\la^2\int_{\Omega}y\bar{\psi}dx+\int_{\Omega}\nabla y\cdot\nabla\bar{\psi}dx-i\la b\int_{\Omega}u\bar{\psi}dx=\int_{\Omega}\left(-bf_1+i\la f_3+f_4\right)\bar{\psi}dx.
\end{equation}
Taking the sum of equations \eqref{NewNew1} and \eqref{NewNew2}, we obtain 
\begin{equation}\label{NewNew3}
a((u,y),(\varphi,\psi))=L(\varphi,\psi),\qquad \forall (\varphi,\psi)\in H_{\Gamma_0}^1(\Omega)\times H_0^1(\Omega), 
\end{equation}
where 
$$
a((u,y),(\varphi,\psi))=a_1((u,y),(\varphi,\psi))+a_2((u,y),(\varphi,\psi))
$$
with 
$$
\left\{\begin{array}{l}
\displaystyle{a_1((u,y),(\varphi,\psi))=\int_{\Omega}\left(\nabla u\cdot \nabla\bar{\varphi}+\nabla y\cdot\nabla\bar{\psi}\right)dx+\mathtt{I}_1(\la,\eta,\alpha)\int_{\Gamma_1}u\bar{\varphi}d\Gamma},\\
\displaystyle{a_2((u,y),(\varphi,\psi))=-\la^2\int_{\Omega}\left(u\bar{\varphi}+y\bar{\psi}\right)dx+i\la b\int_{\Omega}\left(y\bar{\varphi}-u\bar{\psi}\right)dx}
\end{array}
\right.
$$
and 
$$
L(\varphi,\psi)=\int_{\Omega}(i\la f_1+f_2+bf_3)\bar{\varphi}dx+\int_{\Omega}\left(-bf_1+i\la f_3+f_4\right)\bar{\psi}dx+\mathtt{I}_2(\la,\eta,\alpha)\int_{\Gamma_1}f_1\bar{\varphi}d\Gamma-\int_{\Gamma_1}\mathtt{I}_3(\la,\eta,\alpha)\bar{\varphi}d\Gamma.
$$
Let $V=H_{\Gamma_0}^1(\Omega)\times H_0^1(\Omega)$ and $V'=H^{-1}(\Omega)\times H^{-1}(\Omega)$ the dual space of $V$.  Let us consider the following operators,
$$
\left\{\begin{array}{llll}
{\rm A}:&V&\rightarrow& V'\\
&(u,y)&\rightarrow &{\rm A}(u,y)
\end{array}
\right.
\left\{\begin{array}{llll}
{\rm A_1}:&V&\rightarrow& V'\\
&(u,y)&\rightarrow &{\rm A_1}(u,y)
\end{array}
\right.
\left\{\begin{array}{llll}
{\rm A_2}:&V&\rightarrow& V'\\
&(u,y)&\rightarrow &{\rm A_2}(u,y)
\end{array}
\right.
$$
such that
\begin{equation}\label{NewNew4}
\left\{\begin{array}{ll}
\displaystyle{\left({\rm A}(u,y)\right)(\varphi,\psi)=a\left((u,y),(\varphi,\psi)\right)},&\forall (\varphi,\psi)\in V,\\[0.1in]
\displaystyle{\left({\rm A_1}(u,y)\right)(\varphi,\psi)=a_1\left((u,y),(\varphi,\psi)\right)},&\forall (\varphi,\psi)\in V,\\[0.1in]
\displaystyle{\left({\rm A_2}(u,y)\right)(\varphi,\psi)=a_2\left((u,y),(\varphi,\psi)\right)},&\forall (\varphi,\psi)\in V.
\end{array}
\right.
\end{equation}
Our goal is to prove that ${\rm A}$ is an isomorphism operator. For this aim, we divid the proof into three steps.\\
\textbf{Step 1.} In this step, we proof that the operator $A_1$ is an isomorphism operator. For this goal, following the second equation of \eqref{NewNew4} and by using the fact that $\Re(\mathtt{I}_1(\la,\eta,\alpha))>0$ and the trace theorem we can easily verify that $a_1$ is a bilinear continuous coercive form on $V$. Then, by Lax-Milligram Lemma, the operator ${\rm A_1}$ is an isomorphism.\\
\textbf{Step 2.} In this step, we proof that the operator ${\rm A_2}$ is compact. According to the third equation of \eqref{NewNew4}, we have 
$$
\abs{a_2((u,y),(\varphi,\psi))}\leq C\|(u,y)\|_{L^2(\Omega)}\|(\varphi,\psi)\|_{L^2(\Omega)}.
$$   
Finally, using the compactness embedding from $V$ to $L^2(\Omega)\times L^2(\Omega)$ and the continuous embedding from $L^2(\Omega)\times L^2(\Omega)$ into $V'$, we deduce that ${\rm A_2}$ is compact.\\ \\
From steps 1 and 2, we get that the operator ${\rm A}={\rm A_1}+{\rm A_2}$ is a Fredholm operator of index zero. Consequently, by Fredholm alternative, to prove that operator ${\rm A}$ is an isomorphism it is enough to prove that ${\rm A}$ is injective, i.e. $\ker\{{\rm A}\}=\{0\}$. \\
\textbf{Step 3.} In this step, we prove that $\ker\{{\rm A}\}=\{0\}$. For this aim, let $(\tilde{u},\tilde{y})\in \ker\left({\rm A}\right)$, i.e. 
$$
a\left((\tilde{u},\tilde{y}),(\varphi,\psi)\right)=0,\qquad \forall (\varphi,\psi)\in V.
$$
Equivalently, we have 
\begin{equation}\label{NewNew5}
-\la^2\int_{\Omega}\left(u\bar{\varphi}+y\bar{\psi}\right)dx+i\la b\int_{\Omega}\left(y\bar{\varphi}-u\bar{\psi}\right)dx+\int_{\Omega}\left(\nabla u\cdot \nabla\bar{\varphi}+\nabla y\cdot\nabla\bar{\psi}\right)dx+\mathtt{I}_1(\la,\eta,\alpha)\int_{\Gamma_1}u\bar{\varphi}d\Gamma=0.
\end{equation}
Taking $\varphi=\tilde{u}$ and $\psi=\tilde{y}$ in equation \eqref{NewNew5}, we get 
$$
-\la^2\int_{\Omega}\abs{u}^2dx-\la^2\int_{\Omega}\abs{y}^2dx-2\la b\Im\left(\int_{\Omega}y\bar{u}dx\right)+\int_{\Omega}\abs{\nabla u}^2dx+\int_{\Omega}\abs{\nabla y}^2dx+\mathtt{I_1}(\la,\eta,\alpha)\int_{\Gamma_1}\abs{u}^2dx=0.
$$
By taking the imaginary part of the above equation, we get 
\begin{equation}\label{NewNew6}
0=\Im\left(\mathtt{I}_1(\la,\eta,\alpha)\right)\int_{\Gamma_1}\abs{\tilde{u}}^2d\Gamma=\la\gamma k(\alpha,d)\left(\int_{\R^d}\frac{\mu^2(\xi)\left(\abs{\xi}^2+\eta\right)}{\la^2+\left(\abs{\xi}^2+\eta\right)^2}d\xi\right)\left(\int_{\Gamma_1}\abs{\tilde{u}}^2d\Gamma\right),
\end{equation}
 Since $\la\gamma k(\alpha,d)\left(\int_{\R^d}\frac{\mu^2(\xi)\left(\abs{\xi}^2+\eta\right)}{\la^2+\left(\abs{\xi}^2+\eta\right)^2}d\xi\right)\neq 0$, we get 
\begin{equation}\label{NewNew7}
\tilde{u}=0,\qquad \text{a.e.}\quad \text{on}\quad \Gamma_1.
\end{equation}
Then, we find that 
\begin{equation}\label{NewNew8}
\left\{\begin{array}{lllll}
\vspace{0.15cm}-\la^2\tilde{u}-\Delta \tilde{u}+i\la b\tilde{y}&=&0&\text{in}&\Omega,\\
\vspace{0.15cm}-\la^2\tilde{y}-\Delta \tilde{y}-i\la b\tilde{u}&=&0&\text{in}&\Omega,\\
\vspace{0.15cm}\tilde{u} &=&0&\text{on}&\Gamma_0,\\
\vspace{0.15cm}\displaystyle
\frac{\partial \tilde{u}}{\partial \nu}&=&0&\text{on}&\Gamma_1,\\
\vspace{0.15cm}\tilde{y} &=&0&\text{on}&\Gamma.
\end{array}
\right.
\end{equation}
Therefore, the vector $\tilde{U}$ define by 
$$
\tilde{U}=(\tilde{u},i\la \tilde{u},\tilde{y},i\la \tilde{y},0)
$$
belongs to $D(\mathcal{A})$ and we have 
$$
i\la \tilde{U}-\mathcal{A}\tilde{U}=0.
$$
Hence, $\tilde{U}\in \ker(i\la I-\mathcal{A})$, then by Lemma \ref{ker}, under the assumptions $b$ is small enough, we get $\tilde{U}=0$ and $\Gamma$ satisfies the boundary geometric condition \textbf{(MGC)}, this implies that $\tilde{u}=\tilde{y}=0$. Consequently, $\ker({\rm A})=\{0\}.$\\
Therefore, from step 3 and Fredholm alternative, we get that operator ${\rm A}$ is an isomorphism. By using trace theorem, it is easy to see that the operator $L$ is continuous from $V$ to $L^2(\Omega)\times L^2(\Omega)$. Consequently, equation \eqref{NewNew3} admits a unique solution $(u,y)\in V$. Thus, defining $v=i\la u-f_1$, $z=i\la y-f_3$ and 
$$
\omega(x,\xi):=\frac{f_5(x,\xi)}{i\la +\abs{\xi}^2+\eta}+\frac{i\la u\mu(\xi)}{i\la +\abs{\xi}^2+\eta}-\frac{f_1\mu(\xi)}{i\la+\abs{\xi}^2+\eta},
$$
and using the classical regularity arguments, we conclude that equation \eqref{chap3eq11} admits a unique solution $U\in D(\mathcal{A})$. The proof is thus complete. 
\end{proof}

\noindent \textbf{Proof of Theorem \ref{Strong Stability}.}  Using Lemma \ref{ker} we have that $\mathcal{A}$ has non pure imaginary eigenvalues. According to Lemmas \ref{ker}, \ref{surjec}, \ref{surjec2} and with the help of the closed graph theorem of Banach, we deduce that $\sigma(\mathcal{A})\cap i\mathbb{R}=\emptyset$ if $\eta>0$ and $\sigma(\mathcal{A}_1)\cap i\R=\{0\}$ if $\eta=0$. Thus, we get the conclusion by applying Theorem 
\ref{arendtbatty} of Arendt Batty. The proof of the theorem is complete.

\begin{rem}\label{rem1d}  In the case that $d=1$ and $\eta\geq 0$, in \cite{akilwehbe02}, the authors showed that the semigroup of contractions $e^{t\mathcal{A}}$ is strongly stable on the energy space $\mathcal{H}$ if and only if the coupling parameter term satisfies the following condition 
\begin{equation}\tag{${\rm SC}$}
b^2\neq \frac{(k_1^2-ak_2^2)(k_2^2-ak_1^2)}{(a+1)(k_1^2+k_2^2)}\pi^2,\quad \forall k_1,k_2\in \mathbb{Z}.
\end{equation}
\end{rem}
\begin{rem}\label{rem1dnew}  In the case that $d=1$ and $\eta\geq 0$, in \cite{akilwehbe02}, by using spectrum methods, the authors showed that the semigroup generated by the operator $\mathcal{A}$ is not exponentially stable in the energy space $\mathcal{H}$.
\end{rem}
\begin{pro} \label{Theorem-3.2}(See \cite{akilwehbe02})
Assume that  condition ${\rm (SC)}$ holds. Then there exists $n_0\in \mathbb{N}$ sufficiently large  and two sequences $\left(\lambda_{1,n}\right)_{ |n|\geq n_0} $ and $\left(\lambda_{2,n}\right)_{ |n|\geq n_0} $ of simple roots of $f$ (that are also simple eigenvalues of $\mathcal{A}$) satisfying the following asymptotic behavior:\\[0.1in]
\textbf{Case 1.} If $b\neq k\pi$, $k\in\mathbb{Z}^*$, then
\begin{equation}\label{eq-3.15}
		\displaystyle{\lambda_{1,n}= i n\pi+{\frac {\gamma\, \left( 1-\cos \left( b \right)  \right)  \left( i\cos
 \left( \frac{\pi\alpha}{2} \right) -\sin \left( \frac{\pi\alpha}{2}
 \right)  \right) }{ 2\left( n\pi \right) ^{1-\alpha}}}+o \left( 
 \frac{1}{{n}^{1-\alpha}} \right) 
, \ \ \forall \ |n|\geq n_0}
\end{equation}
and
\begin{equation}\label{eq-3.16}
		\displaystyle{\lambda_{2,n}= i n\pi+\frac{i\pi}{2}}+{\frac {\gamma\, \left( 1+\cos \left( b \right)  \right)  \left( i\cos
 \left( \frac{\pi\alpha}{2} \right) -\sin \left( \frac{\pi\alpha}{2}
 \right)  \right) }{ 2\left( n\pi \right) ^{1-\alpha}}}+o \left( 
 \frac{1}{{n}^{1-\alpha}} \right) 
, \ \ \forall\  |n|\geq n_0.
\end{equation}
\textbf{Case 2.} If $b=2 k\pi$, $k\in\mathbb{Z}^*$, then
\begin{equation}\label{eq-3.17}
		\displaystyle{\lambda_{1,n}= i n\pi+{\frac {i{b}^{2}}{8n\pi}}+{\frac {7 i b^4}{128{\pi}^{
3}{n}^{3}}}+{\frac {\gamma\,{b}^{6} \left( i\cos \left(\frac{\pi\alpha}{2}  \right) -\sin \left( \frac{\pi\alpha}{2} \right)  \right) }{128\,{
\pi}^{5-\alpha}{n}^{5-\alpha}}}+O \left( \frac{1}{n^5}\right) 
, \ \ \forall \ |n|\geq n_0}
\end{equation}
and
\begin{equation}\label{eq-3.18}
		\displaystyle{\lambda_{2,n}= i n\pi+\frac{i\pi}{2}}+{\frac {\gamma \left( i\cos
 \left( \frac{\pi\alpha}{2} \right) -\sin \left( \frac{\pi\alpha}{2}
 \right)  \right) }{ \left( n\pi \right) ^{1-\alpha}}}+o \left( 
 \frac{1}{{n}^{1-\alpha}} \right) , \ \ \forall\  |n|\geq n_0.
\end{equation}
\textbf{Case 3.} If $b=(2 k+1)\pi$, $k\in\mathbb{Z}^*$, then
\begin{equation}\label{eq-3.19}
		\displaystyle{\lambda_{1,n}= i n\pi+{\frac {\gamma \left( i\cos
 \left( \frac{\pi\alpha}{2} \right) -\sin \left( \frac{\pi\alpha}{2}
 \right)  \right) }{ \left( n\pi \right) ^{1-\alpha}}}+o \left( 
 \frac{1}{{n}^{1-\alpha}} \right), \ \ \forall \ |n|\geq n_0}
\end{equation}
and
\begin{equation}\label{eq-3.20}
\begin{array}{ll}
		\displaystyle{\lambda_{2,n}= i n\pi+\frac{i\pi}{2}+{\frac {i{b}^{2}}{8n\pi}}-{\frac {i{b}^{2}}{16\pi {n}^{2}}}+{\frac 
{i{b}^{2} \left( 4{\pi}^{2}+7{b}^{2} \right) }{{128\pi
}^{3}{n}^{3}}}}
\\ \\ \hspace{1cm}
\displaystyle{-{\frac {i{b}^{2} \left( 4{\pi}^{2}+21
{b}^{2} \right) }{256{\pi}^{3}{n}^{4}}}+{\frac {\gamma {b}^{6} \left( 
i\cos \left(\frac{\pi\alpha}{2} \right) -\sin \left( \frac{\pi\alpha}{2}
 \right)  \right) }{256 {\pi}^{5-\alpha}{n}^{5-\alpha}}}+O
 \left(\frac{1}{n^5} \right) 
,\ \ \forall\ |n|\geq n_0.}
\end{array}
\end{equation}
\textbf{Case 4.} If $a\neq 1$, then 
\begin{equation}\label{eq-3.21}
\la_{1,n}=in\pi\sqrt{a}+o(1)\quad \text{and}\quad \la_{2,n}=i\left(n+\frac{1}{2}\right)\pi+o(1),\quad \forall \abs{n}\geq n_0.
\end{equation}			
\end{pro}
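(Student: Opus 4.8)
The plan is to reduce the eigenvalue equation $\mathcal{A}U=\lambda U$ to a scalar transcendental equation $f(\lambda)=0$ and then analyze the large zeros of $f$ by a perturbation argument. Specializing the construction of Subsection~\ref{AMWP} to $d=1$, $\Omega=(0,1)$ and writing $U=(u,v,y,z,\omega)$, the relations $v=\lambda u$, $z=\lambda y$ and $(\lambda+|\xi|^{2}+\eta)\omega=v\,\mu(\xi)$ allow one to eliminate $v$, $z$ and $\omega$; using the analytic continuation of Lemma~\ref{Appendix1}, namely $\kappa(\alpha,1)\int_{\R}\mu^{2}(\xi)/(\lambda+|\xi|^{2}+\eta)\,d\xi=(\lambda+\eta)^{\alpha-1}$ with the branch holomorphic off $(-\infty,-\eta]$, the boundary condition coming from the fractional damping turns into a Robin condition at $x=1$ with coefficient $\gamma\lambda(\lambda+\eta)^{\alpha-1}$ (a quantity behaving like $\lambda^{\alpha}$ for large $|\lambda|$), while \eqref{chap3eq1}--\eqref{chap3eq2} become the constant-coefficient system $\lambda^{2}u-u_{xx}+\lambda b y=0$, $\lambda^{2}y-a y_{xx}-\lambda b u=0$ on $(0,1)$, supplemented by the homogeneous Dirichlet conditions inherited from \eqref{chap3eq3}--\eqref{chap3eq4}.

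Next I would write down the general solution of this $2\times2$ system. Looking for $e^{\rho x}$-solutions, $\rho$ ranges over $\pm\rho_{1}(\lambda),\pm\rho_{2}(\lambda)$, where $\rho_{1}^{2},\rho_{2}^{2}$ are the two roots of the dispersion relation $(\rho^{2}-\lambda^{2})(a\rho^{2}-\lambda^{2})=\lambda^{2}b^{2}$; for $a=1$ this gives $\rho_{1,2}=\lambda\mp\tfrac{ib}{2}+O(\lambda^{-1})$ and for $a\neq1$ it gives $\rho_{1}=\lambda+o(1)$, $\rho_{2}=\lambda/\sqrt a+o(1)$. Inserting the general solution into the four boundary conditions ($u(0)=0$, $y(0)=0$, $y(1)=0$, and the fractional Robin condition at $x=1$) yields a $4\times4$ linear system; its determinant, holomorphic on the slit plane, is the characteristic function $f(\lambda)$, built from $\sin$ and $\cos$ of $\rho_{1},\rho_{2}$ together with the factor $\gamma\lambda(\lambda+\eta)^{\alpha-1}$. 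Writing $f=f_{0}+f_{\mathrm{frac}}$ with $f_{0}$ the part independent of the fractional factor, one checks that the zeros of $f_{0}$ are asymptotically $\lambda\simeq in\pi$ and $\lambda\simeq i(n+\tfrac12)\pi$ when $a=1$, and $\lambda\simeq in\pi\sqrt a$, $\lambda\simeq i(n+\tfrac12)\pi$ when $a\neq1$; these are the modes of the undamped system.

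Then comes the localization and refinement. By Rouch\'e's theorem on circles of fixed small radius centred at the points just listed, for $|n|\geq n_{0}$ there is exactly one zero $\lambda_{j,n}$ of $f$ inside each such circle, it is simple, and (unwinding the reduction) it is a simple eigenvalue of $\mathcal{A}$; the same argument shows that all eigenvalues of large modulus lie in these disks. To obtain the expansions I would set $\lambda_{j,n}=(\text{leading value})+\varepsilon_{j,n}$, substitute into $f(\lambda)=0$, expand in powers of $\varepsilon_{j,n}$ and $1/n$ --- using $(\lambda+\eta)^{\alpha-1}=(in\pi)^{\alpha-1}(1+O(1/n))$ and $(in\pi)^{\alpha-1}=(n\pi)^{\alpha-1}\bigl(\sin\tfrac{\pi\alpha}{2}-i\cos\tfrac{\pi\alpha}{2}\bigr)$, which is where the factor $i\cos\tfrac{\pi\alpha}{2}-\sin\tfrac{\pi\alpha}{2}$ (up to sign) enters --- and solve by successive approximation. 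When $b\notin\pi\mathbb{Z}$ the first correction is $-f_{\mathrm{frac}}/f_{0}'$ evaluated at the leading value, and the value of the trigonometric cofactor there produces the coefficient $1\mp\cos b$, giving \eqref{eq-3.15}--\eqref{eq-3.16}. When $b=2k\pi$, respectively $b=(2k+1)\pi$, the relevant cofactor vanishes ($\cos b=\pm1$), so the $n^{-(1-\alpha)}$ correction disappears on that branch and one must carry the expansion of $f_{0}$ in $1/n$ further --- the terms responsible for the purely imaginary shifts $ib^{2}/(8n\pi)$, etc. --- until the fractional term re-enters, which happens only at order $n^{-(5-\alpha)}$; this yields \eqref{eq-3.17}--\eqref{eq-3.20}. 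Case~4 is the simplest: since $\rho_{1}$ and $\rho_{2}$ already separate the two families, $f_{0}$ pins $\lambda_{1,n}$ and $\lambda_{2,n}$ down only to $o(1)$, which is all that is asserted in \eqref{eq-3.21}.

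The main obstacle is the bookkeeping in Cases~2 and~3: one must follow several terms of the asymptotic expansion of the $4\times4$ determinant, pinpoint the cancellations that occur exactly when $\cos b=\pm1$, and correctly balance the polynomial-in-$1/n$ corrections (purely imaginary, merely shifting the frequency) against the single fractional term (which carries the dissipative real part, proportional to $-\sin\tfrac{\pi\alpha}{2}$). Extracting the precise constants --- the $7/128$, the coefficient $b^{6}/128$ of the fractional term, the factors $4\pi^{2}+7b^{2}$ and $4\pi^{2}+21b^{2}$ --- is the genuinely delicate computation; everything else is routine once $f$ is in hand (uniformity of the remainders on the relevant vertical strip, holomorphy of the chosen branch of $(\lambda+\eta)^{\alpha-1}$ near $in\pi$ for large $n$, simplicity of the roots via Rouch\'e, and the threshold $|n|\geq n_{0}$).
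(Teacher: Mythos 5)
Note first that the paper does not actually prove Proposition \ref{Theorem-3.2}: it is imported verbatim from \cite{akilwehbe02}, where the one-dimensional system is analyzed, so there is no in-paper argument to compare against. Your strategy --- eliminate $v$, $z$, $\omega$, convert the fractional damping into a Robin coefficient $\gamma\lambda(\lambda+\eta)^{\alpha-1}$ via the analytic continuation of Lemma \ref{Appendix1}, form the $4\times 4$ characteristic determinant $f$ from the dispersion roots $\rho_{1,2}=\lambda\mp\tfrac{ib}{2}+O(\lambda^{-1})$, localize the large zeros by Rouch\'e around $in\pi$ and $i(n+\tfrac12)\pi$, and refine by successive approximation with $(in\pi)^{\alpha-1}=(n\pi)^{\alpha-1}\bigl(\sin\tfrac{\pi\alpha}{2}-i\cos\tfrac{\pi\alpha}{2}\bigr)$ --- is exactly the standard spectral-asymptotics route that the cited reference follows, and your identification of why the $n^{-(1-\alpha)}$ correction survives for $b\notin\pi\mathbb{Z}$ (cofactor $1\mp\cos b$) but degenerates to order $n^{-(5-\alpha)}$ when $\cos b=\pm1$ is the right structural explanation of Cases 2 and 3.

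That said, as a proof of the statement your proposal is incomplete in the one place where the proposition has actual content: the explicit coefficients. You describe, but do not carry out, the expansion of the determinant that produces $\tfrac{ib^{2}}{8n\pi}$, $\tfrac{7ib^{4}}{128\pi^{3}n^{3}}$, $\tfrac{\gamma b^{6}}{128\pi^{5-\alpha}}$, and the factors $4\pi^{2}+7b^{2}$, $4\pi^{2}+21b^{2}$ in \eqref{eq-3.17}--\eqref{eq-3.20}; you acknowledge this yourself. Since Cases 2 and 3 assert nothing beyond these constants (the qualitative statement that the real part first appears at order $n^{-(5-\alpha)}$ is the whole point for the decay-rate analysis), a referee would require the multi-term expansion of $f$ to be exhibited, or at minimum the intermediate identities from which the cancellations at $\cos b=\pm1$ and the surviving $b^{6}$ coefficient can be checked. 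Everything else --- the Rouch\'e localization, simplicity, uniformity of remainders, and Case 4 --- is routine once $f$ is written down, as you say.
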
	
%%%%%%%%%%%%%%%%%%%%%%%%%%%%%%%%%%%%%%%%%%%%%%%%%%%%%%%%%%%%
%proof of Theorem
%%%%%%%%%%%%%%%%%%%%%%%%%%%%%%%%%%%%%%%%%%%%%%%%%%%%%%%%%%%
\begin{comment}
 \noindent\textbf{Proof of  Theorem \ref{Lack}.} From Proposition \ref{Theorem-3.2}, the operator $\mathcal{A}$ has two branches of eigenvalues, the energy corresponding to the first and second branch of eigenvalues  has no exponential decaying. Therefore the total energy of   system \eqref{aug1}-\eqref{aug9} has no exponential decaying. Thus, the proof is complete.
\end{comment}
	
%%%%%%%%%%%%%%%%%%%%%%%%%%%%%%%%%%%%%%%%%%%%%%%%%%%%%%%%%%%%%
%%%%%%%%%%%%%%%%%%%%%%%%%%%%%%%%%%%%%%%%%%%%%%%%%%%%%%%%%%%%%
%%%%%%%%%%%%%%%%%%%%%%%%%%%%%%%%%%%%%%%%%%%%%%%%%%%%%%%%%%%%%
%%%%%%%%%%%%%%%%%%%%%%%%%%%%%%%%%%%%%%%%%%%%%%%%%%%%%%%%%%%%%
%%%%%%%%%%%%%%%%%%%%%%%%%%%%% Polynomial Stability 
%%%%%%%%%%%%%%%%%%%%%%%%%%%%%%%%%%%%%%%%%%%%%%%%%%%%%%%%%%%%%
%%%%%%%%%%%%%%%%%%%%%%%%%%%%%%%%%%%%%%%%%%%%%%%%%%%%%%%%%%%%%
%%%%%%%%%%%%%%%%%%%%%%%%%%%%%%%%%%%%%%%%%%%%%%%%%%%%%%%%%%%%%
%%%%%%%%%%%%%%%%%%%%%%%%%%%%%%%%%%%%%%%%%%%%%%%%%%%%%%%%%%%%%
\section{Polynomial Stability}\label{PSMCC}
\noindent 
 In this section, we use the frequency domain approach method combining with multiplier method in the case that $\eta>0,\ a=1, b$ small enough and under the $\textbf{(MGC)}$ condition defined in Definition \ref{MGC}, with $\overline{\Gamma_0}\cap \overline{\Gamma}_1=\emptyset$. The frequency domain approach method has been obtained by Batty in \cite{batkai:06}-\cite{batty:08}, Borichev and Tomilov in \cite{borichev:10} and Liu and Rao in \cite{liurao:05}.
\begin{theoreme}\label{BattyBorichevLiu}
Assume that $\mathcal{A}$ is the generator of a strongly continuous semigroup of contractions $\left(e^{t\mathcal{A}}\right)_{t\geq 0}$ on the energy space $\mathcal{H}$. If $i\R\subset \rho(\mathcal{A})$, then for a fixed $\ell>0$ the following conditions are equivalent. 
\begin{enumerate}
\item[$1.$] $\displaystyle{\sup_{\la\in \R}\left\|\left(i\la I-\mathcal{A}\right)^{-1}\right\|_{\mathcal{L}\left(\mathcal{H}\right)}}=O\left(\abs{\la}^{\ell}\right)$,
\item[$2.$] $\left\|e^{t\mathcal{A}}U_0\right\|_{\mathcal{H}}\leq \frac{C}{t^{\frac{1}{\ell}}}\quad \forall t>0,\ U_0\in D(\mathcal{A}),\ \text{for some}\ C>0$.
\end{enumerate}
\end{theoreme}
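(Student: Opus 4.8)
Theorem~\ref{BattyBorichevLiu} is the (by now classical) Hilbert-space characterization of polynomial decay, and I would prove the two implications separately. The implication $2\Rightarrow1$ is the elementary one, while $1\Rightarrow2$ is the substantial one, due to Borichev--Tomilov \cite{borichev:10} (see also Liu--Rao \cite{liurao:05} and Batty \cite{batkai:06,batty:08}); it is precisely there that the Hilbert-space structure of $\mathcal{H}$ is genuinely used.

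\smallskip
\noindent\emph{Plan for the implication $2\Rightarrow1$.} Since $i\R\subset\rho(\mathcal{A})$ we have in particular $0\in\rho(\mathcal{A})$, so $\mathcal{A}^{-1}\in\mathcal{L}(\mathcal{H})$. From the hypothesis $\|e^{t\mathcal{A}}\mathcal{A}^{-1}\|\leq Ct^{-1/\ell}$ together with the contraction bound $\|e^{t\mathcal{A}}\|\leq1$, the semigroup law gives $\|e^{t\mathcal{A}}\mathcal{A}^{-m}\|=\|(e^{(t/m)\mathcal{A}}\mathcal{A}^{-1})^{m}\|\leq C_m t^{-m/\ell}$ for every integer $m\geq1$; choosing $m>\ell$ makes $t\mapsto e^{t\mathcal{A}}\mathcal{A}^{-m}$ integrable on $(0,\infty)$, so the Laplace representation $(\mu I-\mathcal{A})^{-1}\mathcal{A}^{-m}=\int_0^{\infty}e^{-\mu t}e^{t\mathcal{A}}\mathcal{A}^{-m}\,dt$, valid for $\Re\mu>0$, extends continuously up to $\Re\mu=0$ with $\sup_{\lambda\in\R}\|(i\lambda I-\mathcal{A})^{-1}\mathcal{A}^{-m}\|<\infty$. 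Peeling the factors $\mathcal{A}^{-1}$ off one at a time via the resolvent identity $(i\lambda I-\mathcal{A})^{-1}\mathcal{A}^{-1}=(i\lambda)^{-1}\big((i\lambda I-\mathcal{A})^{-1}+\mathcal{A}^{-1}\big)$ already yields a polynomial resolvent bound on $i\R$, and the sharp exponent $\ell$ is then recovered by interpolating these bounds (equivalently, by reading off $\int_{\R}\|(is-\mathcal{A})^{-1}\mathcal{A}^{-1}x\|^2\,ds\leq C\|x\|^2$ from the Plancherel identity, as in \cite{liurao:05}).

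\smallskip
\noindent\emph{Plan for the implication $1\Rightarrow2$.} Step~1: a Neumann-series argument upgrades $1$ to the statement that $\rho(\mathcal{A})\supset\{z:\ \Re z\geq -c(1+|\Im z|)^{-\ell}\}$, with $\|(zI-\mathcal{A})^{-1}\|\leq C(1+|\Im z|)^{\ell}$ on this set. Step~2 (here the Hilbert-space structure enters): the Plancherel theorem gives, for every $x\in\mathcal{H}$ and $\varepsilon>0$,
\[
\int_0^{\infty}e^{-2\varepsilon t}\big\|e^{t\mathcal{A}}\mathcal{A}^{-\beta}x\big\|_{\mathcal{H}}^{2}\,dt=\frac{1}{2\pi}\int_{\R}\big\|(\varepsilon+is-\mathcal{A})^{-1}\mathcal{A}^{-\beta}x\big\|_{\mathcal{H}}^{2}\,ds ,
\]
and since the resolvent identity turns $1$ into $\|(is-\mathcal{A})^{-1}\mathcal{A}^{-\beta}\|\leq C(1+|s|)^{\ell-\beta}$, the right-hand side is $\leq C'\|x\|^{2}$ as soon as the integer $\beta$ exceeds $\ell+\tfrac12$; letting $\varepsilon\downarrow0$ yields $\int_0^{\infty}\|e^{t\mathcal{A}}\mathcal{A}^{-\beta}x\|^{2}\,dt\leq C'\|x\|^{2}$. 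Step~3: one upgrades this time-averaged decay to the pointwise rate by combining, for $\beta$ large, the inversion formula $e^{t\mathcal{A}}\mathcal{A}^{-\beta}x=\frac{1}{2\pi i}\int_{\Gamma_t}e^{tz}(zI-\mathcal{A})^{-1}\mathcal{A}^{-\beta}x\,dz$, over a $t$-dependent contour $\Gamma_t$ lying in the region of Step~1, with the $L^2$-in-time bounds of Step~2, and then interpolating/bootstrapping in the parameter $\beta$ down to $\beta=1$; optimizing $\Gamma_t$ in $t$ produces exactly $\|e^{t\mathcal{A}}\mathcal{A}^{-1}\|\leq Ct^{-1/\ell}$, which is $2$ after writing $U_0=\mathcal{A}^{-1}(\mathcal{A}U_0)$.

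\smallskip
\noindent I expect the main obstacle to be Step~3 of the implication $1\Rightarrow2$: deforming $\Gamma_t$ all the way onto $i\R$ gives no decay whatsoever (there the integrand has modulus one), whereas the crude passage from an $L^2$-in-time bound to a pointwise one costs far more than the single power we can afford, so that recovering precisely the exponent $t^{-1/\ell}$ hinges on the delicate, $t$-adapted choice of contour played off against the Hilbert-space $L^2$ estimates; this is exactly the balance that breaks down in a general Banach space, where one must accept a logarithmic loss. In the present paper the theorem will be applied with $\ell=\tfrac{1-\alpha}{2}$: once the uniform resolvent estimate $\sup_{\lambda\in\R}\|(i\lambda I-\mathcal{A})^{-1}\|_{\mathcal{L}(\mathcal{H})}=O(|\lambda|^{(1-\alpha)/2})$ is established by the frequency-domain plus multiplier argument of Section~\ref{PSMCC}, condition $2$ furnishes the announced decay $\|e^{t\mathcal{A}}U_0\|_{\mathcal{H}}\leq C\,t^{-2/(1-\alpha)}\|U_0\|_{D(\mathcal{A})}$.
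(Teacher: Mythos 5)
The paper does not prove Theorem \ref{BattyBorichevLiu}: it is quoted as a known result from Borichev--Tomilov \cite{borichev:10} (see also \cite{batkai:06}, \cite{batty:08}, \cite{liurao:05}), so the citation is the intended ``proof'' and there is no internal argument to compare yours against. Your outline does reproduce the overall architecture of the literature proof (Neumann series to enlarge the resolvent set, Plancherel on vertical lines, passage from $L^2$-in-time to pointwise decay), and you correctly identify that the Hilbert-space structure enters through Plancherel.

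That said, if the outline were executed as written, both directions would miss the sharp exponent, which is the entire content of the theorem. For $2\Rightarrow1$, peeling factors of $\mathcal{A}^{-1}$ off the bounded operator $(i\la I-\mathcal{A})^{-1}\mathcal{A}^{-m}$ yields only $\|(i\la I-\mathcal{A})^{-1}\|=O(\abs{\la}^{m})$ for an integer $m>\ell$, and the ``interpolation'' you invoke to descend from $m$ to $\ell$ is neither specified nor routine; the standard elementary argument instead uses the identity $(isI-\mathcal{A})^{-1}x=e^{-ist}e^{t\mathcal{A}}(isI-\mathcal{A})^{-1}x+\int_0^te^{-isr}e^{r\mathcal{A}}x\,dr$ together with $e^{t\mathcal{A}}(isI-\mathcal{A})^{-1}x=e^{t\mathcal{A}}\mathcal{A}^{-1}\bigl(is(isI-\mathcal{A})^{-1}x-x\bigr)$ and the choice $t\sim\abs{s}^{\ell}$, which gives $O(\abs{s}^{\ell})$ directly. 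For $1\Rightarrow2$, Step 3 is not merely ``the main obstacle'': the natural passage from the $L^2$-in-time bound of Step 2 to a pointwise bound (via $t\|e^{t\mathcal{A}}x\|^2\leq\int_0^t\|e^{r\mathcal{A}}x\|^2dr$) gives $\|e^{t\mathcal{A}}\mathcal{A}^{-\beta}\|=O(t^{-1/2})$ with $\beta>\ell+\tfrac12$, and the moment inequality then yields only $\|e^{t\mathcal{A}}\mathcal{A}^{-1}\|=O(t^{-1/(2\ell+1)})$, strictly weaker than $t^{-1/\ell}$; the contour deformation you propose does not repair this, since on a contour at distance $(1+\abs{s})^{-\ell}$ from $i\R$ the factor $e^{tz}$ has modulus essentially one until $t$ is at least of order $\abs{s}^{\ell}$. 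The sharp rate in \cite{borichev:10} is obtained by a genuinely different mechanism, so the honest options are to reproduce that argument in full or, as the paper does, to cite it. Finally, a small slip in your closing remark: the paper's condition $({\rm H3})$ is $\sup_{\la}\|(i\la I-\mathcal{A})^{-1}\|_{\mathcal{L}(\mathcal{H})}=O(\abs{\la}^{1-\alpha})$, i.e.\ $\ell=1-\alpha$, not $(1-\alpha)/2$; the rate $t^{-2/(1-\alpha)}$ announced in Theorem \ref{pol} concerns the energy $E=\tfrac12\|U\|_{\mathcal{H}}^2$ and arises from squaring $\|e^{t\mathcal{A}}U_0\|_{\mathcal{H}}\leq Ct^{-1/(1-\alpha)}\|U_0\|_{D(\mathcal{A})}$.
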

%This section is devoted to the study of the polynomial stability of system \eqref{aug1}-\eqref{aug9} in the case $\eta>0,\ a=1, b$ small enough and under the $\textbf{(MGC)}$ condition defined in Definition \ref{MGC}. For the purpose, we will use  a frequency domain approach, namely we will use Theorem 2.4 of \cite{borichev:10}   (see also \cite{batkai:06,batty:08,liurao:05})
%that we partially recall.
Our results are gathered in the following theorem.
\begin{theoreme}\label{pol}
Assume that $a=1, \ \eta > 0$ and $b$ small enough. Then, for all initial data $U_0\in D(\mathcal{A})$, there exists a constant $C>0$ independent of $U_0$, such that the energy of the strong solution $U$ of  \eqref{evolution}, satisfies the following estimation 
\begin{equation}\label{sp1}
E(t,U)\leq C\frac{2}{t^{\frac{2}{1-\alpha}}}\|U_0\|^2_{D(\mathcal{A})},\quad \forall t>0.
\end{equation}
%In particular, for $U_0\in \mathcal{H}$, the energy converges to zero at $t$ goes to infinity.
\end{theoreme}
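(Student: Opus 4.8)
The plan is to apply the Borichev--Tomilov/Batty frequency-domain criterion recalled in Theorem \ref{BattyBorichevLiu}. Since $a=1$, $b$ is small and $\eta>0$, Lemmas \ref{ker} and \ref{surjec2} already give $i\R\subset\rho(\mathcal{A})$, so by Theorem \ref{BattyBorichevLiu} applied with $\ell=1-\alpha$ (for which $1/\ell=1/(1-\alpha)$, so the squared norm decays like $t^{-2/(1-\alpha)}$ as in \eqref{sp1}) it suffices to establish the uniform resolvent bound $\sup_{\lambda\in\R}|\lambda|^{-(1-\alpha)}\,\|(i\lambda I-\mathcal{A})^{-1}\|_{\mathcal{L}(\mathcal{H})}<\infty$. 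I would argue by contradiction: assume there are $\lambda_n\in\R$ with $|\lambda_n|\to\infty$ (say $\lambda_n\to+\infty$) and $U_n=(u_n,v_n,y_n,z_n,\omega_n)\in D(\mathcal{A})$ with $\|U_n\|_{\mathcal{H}}=1$ and $F_n:=\lambda_n^{1-\alpha}(i\lambda_n I-\mathcal{A})U_n=(f_1^n,\dots,f_5^n)\to0$ in $\mathcal{H}$, and show that nevertheless $\|U_n\|_{\mathcal{H}}\to0$.

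First, the dissipation estimate: taking the real part of $\langle(i\lambda_n I-\mathcal{A})U_n,U_n\rangle_{\mathcal{H}}$ and using \eqref{dissipatnes} gives $\gamma\kappa(\alpha,d)\int_{\Gamma_1}\int_{\R^d}(|\xi|^2+\eta)|\omega_n|^2\,d\xi\,d\Gamma=o(\lambda_n^{-(1-\alpha)})$. Solving the last line of $(i\lambda_n I-\mathcal{A})U_n=\lambda_n^{-(1-\alpha)}F_n$ for $\omega_n=(v_n|_{\Gamma_1}\mu(\xi)+\lambda_n^{-(1-\alpha)}f_5^n)(i\lambda_n+|\xi|^2+\eta)^{-1}$ and using the symbol identities $\int_{\R^d}\frac{\mu^2(\xi)}{i\lambda_n+|\xi|^2+\eta}\,d\xi=\kappa(\alpha,d)^{-1}(i\lambda_n+\eta)^{\alpha-1}$ and $\int_{\R^d}\frac{(|\xi|^2+\eta)\mu^2(\xi)}{|i\lambda_n+|\xi|^2+\eta|^2}\,d\xi\sim C\lambda_n^{\alpha-1}$ (both following from the computations behind Lemma \ref{Appendix1}), I expect to deduce $\|\omega_n\|_{L^2(\Gamma_1\times\R^d)}=o(\lambda_n^{-(1-\alpha)/2})$, $\|\partial_\nu u_n\|_{L^2(\Gamma_1)}=\gamma\kappa(\alpha,d)\,\|\int_{\R^d}\mu(\xi)\omega_n\,d\xi\|_{L^2(\Gamma_1)}=o(\lambda_n^{-(1-\alpha)/2})$, and — the key point — $\|v_n\|_{L^2(\Gamma_1)}=o(1)$, hence $\|\lambda_n u_n\|_{L^2(\Gamma_1)}=o(1)$. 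Eliminating $v_n,z_n,\omega_n$ reduces the problem to the elliptic pair $-\lambda_n^2u_n-\Delta u_n+i\lambda_n by_n=g^u_n$, $-\lambda_n^2y_n-\Delta y_n-i\lambda_n bu_n=g^y_n$ in $\Omega$, with $u_n=0$ on $\Gamma_0$, $y_n=0$ on $\Gamma$, the Neumann datum of $u_n$ on $\Gamma_1$ as small as above, and $\|g^u_n\|_{L^2(\Omega)},\|g^y_n\|_{L^2(\Omega)}=O(\lambda_n^{\alpha}\,o(1))$; the $\lambda_n^{\alpha}$-growth is harmless since $\|u_n\|_{L^2(\Omega)},\|y_n\|_{L^2(\Omega)}=O(\lambda_n^{-1})$.

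Second, a Rellich--Pohozaev multiplier estimate for $u_n$: applying the multiplier $2(m\cdot\nabla\bar u_n)+(d-1)\bar u_n$, $m(x)=x-x_0$, as in the proof of Lemma \ref{ker} but now tracking the nonzero (small) boundary data, and using $m\cdot\nu\le0$ on $\Gamma_0$ together with $m\cdot\nu\ge m_0>0$ on $\Gamma_1$ (Definition \ref{MGC}) to absorb the tangential boundary term $\tfrac12\int_{\Gamma_1}(m\cdot\nu)|\nabla_\tau u_n|^2$ against $\Re\int_{\Gamma_1}(\partial_\nu u_n)(m\cdot\nabla_\tau\bar u_n)$ by Young's inequality, and treating $g^u_n$ by an integration by parts that trades $\lambda_n^{\alpha}$ against $\|u_n\|_{L^2(\Omega)}=O(\lambda_n^{-1})$, I expect to reach $\lambda_n^2\|u_n\|_{L^2(\Omega)}^2+\|\nabla u_n\|_{L^2(\Omega)}^2\le o(1)+Cb^2\|z_n\|_{L^2(\Omega)}^2$, the coupling term $i\lambda_n by_n$ being absorbed by Cauchy--Schwarz and Young using $\|\lambda_n y_n\|\approx\|z_n\|$ and the smallness of $b$. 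Equivalently, $\|v_n\|_{L^2(\Omega)}^2+\|\nabla u_n\|_{L^2(\Omega)}^2\le o(1)+Cb^2\|z_n\|_{L^2(\Omega)}^2$.

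The third step is the main obstacle: controlling the $y$-component, which carries no damping and satisfies $y_n=0$ on all of $\Gamma$, so that a bare multiplier identity for $y_n$ leaves the uncontrolled Neumann trace $\int_{\Gamma_1}(m\cdot\nu)|\partial_\nu y_n|^2$ on the right. I would close the estimate by combining the $u$- and $y$-multiplier identities with matched multipliers: the skew-symmetry of the velocity coupling together with $y_n=0$ on $\Gamma_1$ makes the boundary contributions of the cross terms vanish and the interior cross terms cancel up to $o(1)$, while the residual $O(b^2)\|\nabla y_n\|^2$ is absorbed by smallness of $b$ and the boundary term $\int_{\Gamma_1}(m\cdot\nu)|\partial_\nu y_n|^2$ is controlled through a boundary (hidden-regularity) multiplier identity for $\partial_\nu y_n$ together with the smallness of $b$, $\|\partial_\nu u_n\|_{L^2(\Gamma_1)}$ and $\|u_n\|_{L^2(\Gamma_1)}$. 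This should yield $\lambda_n^2\|y_n\|_{L^2(\Omega)}^2+\|\nabla y_n\|_{L^2(\Omega)}^2=o(1)$, hence $\|z_n\|_{L^2(\Omega)}^2=\lambda_n^2\|y_n\|_{L^2(\Omega)}^2+o(1)=o(1)$; plugging this back into the second step gives $\|v_n\|_{L^2(\Omega)}^2+\|\nabla u_n\|_{L^2(\Omega)}^2=o(1)$, and therefore $\|U_n\|_{\mathcal{H}}^2=\|v_n\|^2_{L^2(\Omega)}+\|\nabla u_n\|^2_{L^2(\Omega)}+\|z_n\|^2_{L^2(\Omega)}+\|\nabla y_n\|^2_{L^2(\Omega)}+\gamma\kappa(\alpha,d)\|\omega_n\|^2_{L^2(\Gamma_1\times\R^d)}\to0$, contradicting $\|U_n\|_{\mathcal{H}}=1$. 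By Theorem \ref{BattyBorichevLiu} this gives the resolvent bound with $\ell=1-\alpha$, hence \eqref{sp1}. I expect the two most delicate points to be the sharp symbol asymptotics of Step~1, which pin down the exponent $1-\alpha$, and the treatment of the undamped Dirichlet component $y_n$ in Step~3.
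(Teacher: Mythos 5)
Your overall framework coincides with the paper's: the Borichev--Tomilov criterion with $\ell=1-\alpha$, a contradiction argument, the dissipation and symbol asymptotics yielding $\|\partial_\nu u_n\|_{L^2(\Gamma_1)}=o(1)\lambda_n^{-(1-\alpha)/2}$ and $\|u_n\|_{L^2(\Gamma_1)}=o(1)/\lambda_n$ (the paper's Lemma \ref{information} and Lemma \ref{Appendix2}), and a Rellich multiplier estimate for $u_n$ under \textbf{(MGC)} (the paper's Lemmas \ref{laugradu} and \ref{lauo}). Your Steps 1 and 2 are sound. The genuine gap is in Step 3, the undamped component $y_n$. Any multiplier identity for $y_n$ (Dirichlet on all of $\Gamma$) produces the term $\int_{\Gamma_1}(m\cdot\nu)|\partial_\nu y_n|^2\,d\Gamma$ on the side you need to bound, and since $m\cdot\nu\ge m_0>0$ on $\Gamma_1$ it cannot be discarded. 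A priori this trace is only $O(1)$ --- that is exactly what the paper proves in Lemma \ref{ynu} --- and a hidden-regularity estimate gives nothing better: $\partial_\nu y_n|_{\Gamma_1}$ has no reason to be small before one already knows $y_n\to0$, and neither the smallness of $b$ nor that of $\|\partial_\nu u_n\|_{L^2(\Gamma_1)}$ enters this term. Your combined identity therefore only yields $\lambda_n^2\|y_n\|^2+\|\nabla y_n\|^2\le O(1)$, which is vacuous; the claimed $\|z_n\|^2=o(1)$ does not follow, and the subsequent ``plug back into Step 2'' collapses.

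The missing idea --- and the paper's actual route --- is an equipartition identity obtained by cross-multiplication (Lemma \ref{laulay}): multiply the $u$-equation by $\lambda\bar y$ and the $y$-equation by $\lambda\bar u$, add, and take imaginary parts. The only surviving boundary contribution is $\lambda\int_{\Gamma_1}(\partial_\nu y)\bar u\,d\Gamma$, which is $o(1)$ precisely because the merely bounded trace $\|\partial_\nu y\|_{L^2(\Gamma_1)}=O(1)$ is paired with $\|u\|_{L^2(\Gamma_1)}=o(1)/\lambda$; one obtains $\int_\Omega|\lambda u|^2\,dx=\int_\Omega|\lambda y|^2\,dx+o(1)$. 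This is what allows the coupling term $2\lambda b\,\Re\left(-i\int_\Omega y(m\cdot\nabla\bar u)\,dx\right)\le \|m\|_\infty^2 b^2\int_\Omega|\lambda y|^2\,dx+\int_\Omega|\nabla u|^2\,dx$ in the $u$-multiplier estimate to be rewritten in terms of $\|\lambda u\|^2$ itself, giving $(1-\|m\|_\infty^2b^2)\int_\Omega|\lambda u|^2\,dx\le o(1)$ for $|b|<1/\|m\|_\infty$; then $\int_\Omega|\lambda y|^2\,dx=o(1)$ follows from the equipartition, not from an independent estimate on $y$. In short, the order of your argument must be reversed: $u$ is killed first, with the coupling absorbed through equipartition, and $y$ inherits the smallness.
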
	
%\noindent As the condition $i\R\subset \rho(\mathcal{A})$ was already checked in Theorem \ref{Strong Stability}, it remains to prove that condition \eqref{BT1} holds. This is made with the help of a multiplier method under the $\left(\textbf{MGC}\right)$ condition defined in Definition \ref{MGC}. We define the two opens sets by 
\noindent Since, for $\eta>0$, $\sigma(\mathcal{A})\cap i\R=\emptyset$, then for the proof of Theorem \ref{pol}, according to Theorem \ref{BattyBorichevLiu}, we need to prove that 
\begin{equation}\tag{${\rm H3}$}
\sup_{\la\in \R}\left\|(i\la Id-\mathcal{A})^{-1}\right\|_{\mathcal{L}(\mathcal{H})}=O\left(\abs{\la}^{1-\alpha}\right).
\end{equation}
First of all, we define the function $\theta$ by
%$$
%\left\{\begin{array}{lll}
%\Gamma_0^{\varepsilon}&=&\displaystyle
%\left\{x\in \Omega;\ \ \inf_{y\in \Gamma_0}\left|x-y\right|\leq \varepsilon\right\},\\
%\Gamma_1^{\varepsilon}&=&\displaystyle
%\left\{x\in \Omega;\ \ \inf_{y\in \Gamma_1}\left|x-y\right|\leq \varepsilon\right\}
%\end{array}
%\right.
%$$
%In this section, assume that  $a=1, \ \eta > 0$ and $b$ small enough. Then, we study the Polynomial Stability  of system \eqref{N1}-\eqref{N3}. For $\varepsilon>0$ we define the sets 
%\begin{equation}\label{SP1}
%\left\{\begin{array}{lll}
%\Gamma_0^{\varepsilon}&=&\displaystyle
%\left\{x\in \Omega; \inf_{\tilde{x}\in \Gamma_0}|x-\tilde{x}|<\varepsilon \right\},\\
%\Gamma_1^{\varepsilon}&=&\displaystyle
%\left\{x\in \Omega; \inf_{\tilde{x}\in \Gamma_1}|x-\tilde{x}|<\varepsilon \right\}
%\end{array}\right.
%\end{equation} 
\begin{equation}\label{theta} %SP2
\left\{\begin{array}{lll}
\theta\equiv 0&\text{on}&\Gamma_0,\\
\theta\equiv 1&\text{on}&\Gamma_1,\\
\theta(x)\in [0,1].
\end{array}
\right.
\end{equation}
%Since $\overline{\Gamma_0}\cap \overline{\Gamma_1}=\emptyset$, we can choose $\varepsilon>0$ small enough such that 
%$$
%\Gamma_0\subset \Gamma_0^{\varepsilon}\quad \text{and}\quad %\Gamma_1\subset \Gamma_1^{\varepsilon}.
%$$
%\begin{figure}[h]
%\begin{center}
%	\includegraphics[scale=5,width=7cm]{MA77.png}
%	\caption{Models satisfies $\rm{(MGC)}$ condition with $\overline{\Gamma_0}\cap \overline{\Gamma_1}=\emptyset$.}
%\end{center}
%\end{figure}
%Next, we present the main result of this section.
	% there exists a constant $C>0$ such that for every initial data $U_0=(u_0,u_1,y_0,y_1,\omega_0)\in D(\mathcal{A})$ the energy of system  \eqref{N1}-\eqref{N3}  verify the following estimation:
	%\begin{equation}\label{sp1}
	%E(t)\leq C\frac{1}{t^{\frac{2}{\ell}}}\|U_0\|^2_{D(\mathcal{A})},\qquad\forall t>0,
	%\end{equation}
	%where
	%$$
	%\ell=\left\{\begin{array}{cll}
	%\vspace{0.15cm}2-2\alpha&\text{if}&0<\alpha\leq \frac{1}{2},\\
	%1&\text{if}&\frac{1}{2}\leq \alpha<1.
	%\end{array}
	%\right.
	%$$
%\noindent As announced in Theorem 4.1, by tacking $\ell=2-2\alpha$, the polynomial energy decay \eqref{sp1} holds if the following conditions
%\begin{equation}\tag{$H1$}
%	i\R\subset \rho(\mathcal{A})
%\end{equation}
%and 
%\begin{equation}\tag{$H2$}
%	\sup_{|\la|\in\R}\frac{1}{|\la|^{\ell}}\|(i\la I-%\mathcal{A})^{-1}\|<+\infty
%\end{equation} 
We will argue by contradiction. For this purpose, suppose ${\rm (H3)}$ is false, then there exists a real sequence $(\lambda_n)$, with $|\lambda_n|\rightarrow+\infty$ and a sequence $(U^n)\subset D(\mathcal{A})$, verifying the following conditions
\begin{equation}\label{sp2}
	\|U^n\|_{\mathcal{H}}=\|(u^n,v^n,y^n,z^n,\omega^n)\|_{\mathcal{H}}=1
\end{equation}
and
\begin{equation}\label{SP3}
	\la_n^{\ell}(i\la_n-\mathcal{A})U^n=(f_1^n,f_2^n,f_3^n,f_4^n,f_5^n)\to 0\quad\text{in}\quad\mathcal{H}.
\end{equation}  
%\noindent According to [] and [], we have to check if the following conditions holds,
%\begin{equation}\tag{$H1$}
%i\R\subset \rho(\mathcal{A})
%\end{equation}
%and 
%\begin{equation}\tag{$H2$}
%\sup_{|\la|\in\R}\frac{1}{|\la|^{\ell}}\|(i\la I-\mathcal{A})^{-1}\|<+\infty.
%\end{equation}
%Condition $(H1)$ is already proved in section... We will prove $(H2)$ using an argument of contradiction. Suppose that $(H2)$ is false, then there exists a real sequence $(\la_n)$, with $|\la_n|\to+\infty$ and a sequence $U^n\in D(\mathcal{A})$, verifying the following conditions
For the simplicity, we drop the index $n$. Detailing equation \eqref{SP3}, we get 
\begin{eqnarray}
	i\la u-v&=&\frac{f_1}{\la^{\ell}}\longrightarrow 0\quad\text{in}\quad H_{\Gamma_0}^1(\Omega),\label{SP4}\\
	i\la v-\Delta u+bz&=&\frac{f_2}{\la^{\ell}}\longrightarrow 0\quad\text{in}\quad L^2(\Omega),\label{SP5}\\
	i\la y-z&=&\frac{f_3}{\la^{\ell}}\longrightarrow 0\quad\text{in}\quad H_0^1(\Omega),\label{SP6}\\
	i\la z-\Delta y-bv&=&\frac{f_4}{\la^{\ell}}\longrightarrow 0\quad\text{in}\quad L^2(\Omega),\label{SP7}\\
	i\la\omega+(|\xi|^2+\eta)\omega-v|_{\Gamma_1}\mu(\xi)&=&\frac{f_5}{\la^{\ell}}\longrightarrow 0\quad\text{in}\quad L^2(\Gamma_1\times\R^d).\label{SP8}
\end{eqnarray}
Note that $U$ is uniformly bounded in $\mathcal{H}$. Then, taking the inner product of  \eqref{SP3} with $U$ in $\mathcal{H}$, we get 
\begin{equation}\label{SP9}
	-\gamma\kappa\int_{\Gamma_1}\int_{\R^d}(|\xi|^2+\eta)|\omega|^2d\xi d\Gamma=\Re\left((i\la I-\mathcal{A})U,U\right)_{\mathcal{H}}=\frac{o(1)}{\la^{1-\alpha}}.
\end{equation}
%From equations \eqref{SP4} and \eqref{SP6}, we obtain 
%\begin{equation}\label{sp10}
%v=i\la u-\frac{f_1}{\la^{\ell}}\quad \text{and} \quad z=i\la %y-\frac{f_3}{\la^{\ell}}.
%\end{equation}
%Inserting equation \eqref{sp10} in equations \eqref{SP5} and \eqref{SP7}, we get 
By eliminating $v$ and $z$ from the above system, we get 
\begin{eqnarray}
	\la^2u+\Delta u-i\la by&=&-\frac{f_2}{\la^{\ell}}-\frac{i\la f_1}{\la^{\ell}}-\frac{bf_3}{\la^{\ell}},\label{sp11}\\
	\la^2y+\Delta y+i\la bu&=&-\frac{f_4}{\la^{\ell}}-\frac{i\la f_3}{\la^{\ell}}+\frac{bf_1}{\la^{\ell}}.\label{sp12}
\end{eqnarray}
%%%%%%%%%%%%%%%%%%%%%%%%%%%%%%%%%%%%%%%%%%%%%%%%%%%%%%%%%%%%%%%%%%%%%%%%%%%%%%%%%%%%%%%%%%%%%%%%%%%%%%%%%%%%%%%%%%%%%%%%%%%%%%%%%%%%%%%%%%%%%%%%%%%%%%%%%%%%%%%%%%%%%%%%%%%%%%%%%%%%%%%%%%%%%%%%%%%%%%%%%%%%%%%%%%%%%%%%%%%%%%%%%%%%%%%%%%%%%%%%%%%%%%%%%%%%%%%%%%%%%%%%%%%%%%%%%%%%%%%%%%%%%%%%%%%%%
\begin{lemma}\label{information}Assume that $\eta>0$. Then the solution $(u,v,y,z,\omega)\in D(\mathcal{A})$ of \eqref{SP4}-\eqref{SP8} satisfies the following asymptotic behavior estimation
	\begin{eqnarray}
		\|u\|_{L^2(\Omega)}&=&\frac{O(1)}{\la},\label{estim1}\\
		\|y\|_{L^2(\Omega)}&=&\frac{O(1)}{\la},\label{estim2}\\
		\|\partial_{\nu}u\|_{L^2(\Gamma_1)}&=&\frac{o(1)}{\la^{\frac{1-\alpha}{2}}},\label{estim3}\\
		\|u\|_{L^2(\Gamma_1)}&=&\frac{o(1)}{\la}\label{estim4}.
	\end{eqnarray}
	%\begin{equation}\label{nn1}
	%\|u\|=\frac{O(1)}{\la},\quad  \|y\|=\frac{O(1)}{\la},
	%\end{equation}
	%\begin{equation}\label{nn2}
	%\int_{\Gamma_1}\left|\frac{\partial u}{\partial\nu}\right|^2d\Gamma_1=\frac{o(1)}{\la^{\ell}}.
	%\end{equation}	
	%and \begin{equation}\label{nn3}
	%\int_{\Gamma_1}|u|^2d\Gamma_1=\frac{o(1)}{\la^{\ell+2\alpha}}
	%\end{equation}
\end{lemma}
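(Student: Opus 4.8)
The plan is to read the four bounds off the resolvent identities \eqref{SP4}--\eqref{SP8} together with the dissipation estimate \eqref{SP9}, bearing in mind that here $\ell=1-\alpha$ and that the standing hypothesis $\eta>0$ is exactly what makes the $\xi$-integrals below finite. The starting point is to record the consequence of \eqref{SP9}: since $\eta>0$,
\[
\int_{\Gamma_1}\int_{\R^d}\big(|\xi|^2+\eta\big)|\omega|^2\,d\xi\,d\Gamma=\frac{o(1)}{\la^{1-\alpha}},\qquad \|\omega\|_{L^2(\Gamma_1\times\R^d)}^2\le\frac{o(1)}{\eta\,\la^{1-\alpha}},
\]
and this is the only ingredient carrying decay.

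The estimates \eqref{estim1} and \eqref{estim2} then follow at once: from \eqref{SP4} and \eqref{SP6}, $i\la u=v+\la^{-\ell}f_1$ and $i\la y=z+\la^{-\ell}f_3$, where $\la^{-\ell}f_1\to0$ and $\la^{-\ell}f_3\to0$ in $L^2(\Omega)$, while $\|v\|_{L^2(\Omega)},\|z\|_{L^2(\Omega)}\le\|U\|_{\mathcal H}=1$; dividing by $\la$ gives $\|u\|_{L^2(\Omega)}=O(1)/\la$ and $\|y\|_{L^2(\Omega)}=O(1)/\la$. For \eqref{estim3} I would use the boundary relation built into $D(\mathcal A)$, namely $\partial_\nu u=-\gamma\kappa(\alpha,d)\int_{\R^d}\mu(\xi)\omega\,d\xi$ on $\Gamma_1$, and the weighted Cauchy--Schwarz inequality
\[
\Big|\int_{\R^d}\mu(\xi)\omega\,d\xi\Big|^2\le\Big(\int_{\R^d}\frac{\mu^2(\xi)}{|\xi|^2+\eta}\,d\xi\Big)\Big(\int_{\R^d}\big(|\xi|^2+\eta\big)|\omega|^2\,d\xi\Big),
\]
whose first factor is a finite constant for $\eta>0$ (in the spirit of Lemma \ref{Appendix1}); integrating over $\Gamma_1$ and invoking the dissipation estimate above yields $\|\partial_\nu u\|_{L^2(\Gamma_1)}^2=o(1)/\la^{1-\alpha}$, which is \eqref{estim3}.

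The bound \eqref{estim4} is the delicate one. Taking the trace of \eqref{SP4} on $\Gamma_1$ and substituting into \eqref{SP8} gives
\[
\omega=\frac{i\la\,u|_{\Gamma_1}\,\mu(\xi)+\la^{-\ell}\big(f_5-f_1|_{\Gamma_1}\mu(\xi)\big)}{i\la+|\xi|^2+\eta}.
\]
Plugging this into the left side of the dissipation estimate, using $\big|i\la+|\xi|^2+\eta\big|^2=\la^2+(|\xi|^2+\eta)^2$, expanding the modulus squared and bounding the cross term by Young's inequality, one obtains a lower bound of the form
\[
\int_{\Gamma_1}\int_{\R^d}\big(|\xi|^2+\eta\big)|\omega|^2\,d\xi\,d\Gamma\ \ge\ \tfrac12\,\la^2\,\|u\|_{L^2(\Gamma_1)}^2\,J(\la)-\frac{C\,\|f_5\|_{L^2(\Gamma_1\times\R^d)}^2}{\la^{2\ell+1}}-\frac{C\,\|f_1\|_{L^2(\Gamma_1)}^2\,J(\la)}{\la^{2\ell}},
\]
where $J(\la):=\int_{\R^d}\frac{(|\xi|^2+\eta)\mu^2(\xi)}{\la^2+(|\xi|^2+\eta)^2}\,d\xi$ and the $f_5$-remainder used $\frac{|\xi|^2+\eta}{\la^2+(|\xi|^2+\eta)^2}\le\frac1{2\la}$. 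A scaling $\xi=\sqrt\la\,\zeta$ (or an appendix computation along the lines of Lemma \ref{Appendix1}) shows $J(\la)\ge c\,\la^{\alpha-1}$ for $\la$ large; since $\alpha<1$, and since $\|f_1\|_{L^2(\Gamma_1)}\to0$ by the trace theorem and $\|f_5\|_{L^2(\Gamma_1\times\R^d)}\to0$, both remainder terms are $o(1)$ times a power of $\la$ strictly more negative than $-(1-\alpha)$. Comparing with the dissipation estimate,
\[
c\,\la^{1+\alpha}\,\|u\|_{L^2(\Gamma_1)}^2\ \le\ \la^2J(\la)\,\|u\|_{L^2(\Gamma_1)}^2\ \le\ \frac{o(1)}{\la^{1-\alpha}},
\]
so $\|u\|_{L^2(\Gamma_1)}^2=o(1)/\la^2$, i.e. \eqref{estim4}.

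The step I expect to cost the most work is the asymptotic lower bound $J(\la)\gtrsim\la^{\alpha-1}$, together with the bookkeeping showing that the $f_1$- and $f_5$-generated terms in the last estimate are genuinely of lower order than $\la^{-(1-\alpha)}$; everything else is a direct manipulation of the resolvent system and the energy norm.
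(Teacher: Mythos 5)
Your treatment of \eqref{estim1}, \eqref{estim2} and \eqref{estim3} coincides with the paper's: the first two are read off \eqref{SP4}, \eqref{SP6} and the uniform bound $\|U\|_{\mathcal{H}}=1$, and the third is exactly the paper's weighted Cauchy--Schwarz step \eqref{INFOO1} (whose first factor $\int_{\R^d}\mu^2(\xi)(|\xi|^2+\eta)^{-1}d\xi$ is finite precisely because $\eta>0$) combined with \eqref{SP9}. For \eqref{estim4}, however, you take a genuinely different route, and it works. The paper never solves \eqref{SP8} for $\omega$: it multiplies \eqref{SP8} by the weight $|\xi|^{d}(i\la+|\xi|^2+\eta)^{-1-d}$, integrates in $\xi$, applies Cauchy--Schwarz, and then needs the three appendix asymptotics $A_1,A_2,A_3$ of Lemma \ref{Appendix2} to conclude $\|v\|_{L^2(\Gamma_1)}=o(1)$, from which \eqref{estim4} follows via \eqref{SP4}. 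You instead express $\omega$ explicitly in terms of $u|_{\Gamma_1}$, $f_1$, $f_5$, insert it into the dissipation estimate, and extract the lower bound $\tfrac12\la^2\|u\|_{L^2(\Gamma_1)}^2 J(\la)$ with
\begin{equation*}
J(\la)=\int_{\R^d}\frac{(|\xi|^2+\eta)\,\mu^2(\xi)}{\la^2+(|\xi|^2+\eta)^2}\,d\xi,
\end{equation*}
which the scaling $\xi=\sqrt{\la}\,\zeta$ pins down as comparable to $\la^{\alpha-1}$ (both bounds converge since $0<\alpha<1$; note $J$ is, up to the factor $\la\gamma\kappa$, the imaginary part of $\mathtt{I}_1(\la,\eta,\alpha)$ already used in \eqref{NewNew6}). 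The bookkeeping you flag does close: with $\ell=1-\alpha$ the $f_5$-remainder is $O(\la^{-2\ell-1})\|f_5\|^2=o(\la^{2\alpha-3})$ and the $f_1$-remainder is $O(\la^{-2\ell})J(\la)\|f_1\|_{L^2(\Gamma_1)}^2=o(\la^{3\alpha-3})$ (using the trace theorem on $f_1$), both dominated by $\la^{\alpha-1}$, so the comparison with the left-hand side $\geq c\,\la^{1+\alpha}\|u\|_{L^2(\Gamma_1)}^2$ yields \eqref{estim4}. What each approach buys: the paper's weight reduces everything to Cauchy--Schwarz plus precomputed integrals but requires three separate appendix asymptotics and an ad hoc exponent $d$; yours needs only the single integral $J(\la)$ (with matching upper and lower bounds) and a Young-type splitting of the cross term, and it delivers $\|u\|_{L^2(\Gamma_1)}=o(1)/\la$ directly rather than through the intermediate estimate $\|v\|_{L^2(\Gamma_1)}=o(1)$.
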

\begin{proof}
	Using equations \eqref{SP3}, \eqref{SP4} and \eqref{SP6}, we deduce directly the estimations \eqref{estim1}-\eqref{estim2}. Now, from the boundary condition 	
$$
\partial_{\nu}u+\gamma\kappa\int_{\R^d}\mu(\xi)\omega(x,\xi)d\xi=0\quad \text{on}\quad \Gamma_1,
$$	
using Cauchy-Schwartz  inequality, we get 
\begin{equation}\label{INFOO1}
\|\partial_{\nu}u\|_{L^2(\Gamma_1)}^2\leq\gamma^2\kappa^2\left(\int_{\R^d}\frac{\mu^2(\xi)}{|\xi|^2+\eta}d\xi\right)\left(\int_{\Gamma_1}\int_{\R^d}(|\xi|^2+\eta)|\omega|^2d\xi d\Gamma\right).
\end{equation}	
Then, combining equation \eqref{SP9} and equation \eqref{INFOO1}, we obtain the desired equation \eqref{estim3}. Finally multiplying equation \eqref{SP8} by $(i\la +|\xi|^2+\eta)^{-1-d}\abs{\xi}^d$, integrating over $\R^d$ with respect to the variable $\xi$ and applying Cauchy-Schwartz inequality, we obtain 
\begin{equation}\label{NewINFOO1}
\begin{array}{ccl}
\displaystyle
\abs{v|_{\Gamma_1}}\int_{\R^d}\frac{\abs{\xi}^{\alpha+\frac{d}{2}}}{\left(\abs{\la}+|\xi|^2+\eta\right)^{d+1}}d\xi&\leq&\displaystyle
\left(\int_{\R^d}\frac{\abs{\xi}^{2d-2}}{\left(\abs{\la}+\abs{\xi}^2+\eta\right)^{2d}}d\xi\right)^{\frac{1}{2}}\left(\int_{\R^d}\abs{\xi\omega(x,\xi)}^{2}d\xi\right)^{\frac{1}{2}}\\
&&\displaystyle
+\frac{1}{\abs{\la}^{\ell}}\left(\int_{\R^d}\abs{f_5(x,\xi)}^2d\xi\right)^{\frac{1}{2}}\left(\int_{\R^d}\frac{\abs{\xi}^{2d}}{(\abs{\la}+\abs{\xi}^2+\eta)^{2d+2}}d\xi\right)^{\frac{1}{2}}.
\end{array}
\end{equation}
Using Young's inequality for products in equation \eqref{NewINFOO1} and integrate over $\Gamma_1$, we get
\begin{equation}\label{INFOO2}
\int_{\Gamma_1}|v|^2d\Gamma_1\leq \frac{2A_2}{A_1}\int_{\Gamma_1}\int_{\R^d}\abs{\xi\omega(x,\xi)}^2d\xi d\Gamma+\frac{2A_3}{A_1\abs{\la}^{2\ell}}\int_{\Gamma_1}\int_{\R^d}\abs{f_3(x,\xi)}^2d\xi d\Gamma
\end{equation}
where 
\begin{equation}\label{A1A2A3}
A_1=\left(\int_{\R^d}\frac{\abs{\xi}^{\alpha+\frac{d}{2}}}{\left(\abs{\la}+\abs{\xi}^2+\eta\right)^{d+1}}\right)^2,\quad A_2=\int_{\R^d}\frac{\abs{\xi}^{2d-2}}{\left(\abs{\la}+\abs{\xi}^2+\eta\right)^{2d}d\xi}\ \text{and}\ A_3=\int_{\R^d}\frac{\abs{\xi}^{2d}}{\left(\abs{\la}+\abs{\xi}^2+\eta\right)^{2d+2}}d\xi. 
\end{equation}
Indeed, as $\eta>0$, we have 
$$
\int_{\Gamma_1}\int_{\R^d}\abs{\xi\omega(x,\xi)}^2d\xi d\Gamma \leq \int_{\Gamma_1}\int_{\R^d}\left(\abs{\xi}^2+\eta\right)\abs{\omega(x,\xi)}^2d\xi d\Gamma,
$$
which gives, 
\begin{equation}\label{NEWINFOO2}
\int_{\Gamma_1}|v|^2d\Gamma_1\leq \frac{2A_2}{A_1}\int_{\Gamma_1}\left(\abs{\xi}^2+\eta\right)\abs{\omega(x,\xi)}^2d\xi d\Gamma+\frac{2A_3}{A_1\abs{\la}^{2\ell}}\int_{\Gamma_1}\int_{\R^d}\abs{f_3(x,\xi)}^2d\xi d\Gamma.
\end{equation}
On the other hand, using Lemma \ref{Appendix2} , we get 
\begin{equation}\label{NEWA1A2A3}
A_1=c(\alpha,d)\left(\abs{\la}+\eta\right)^{\alpha-\frac{d}{2}-2},\quad A_2=c_1(d)\left(\abs{\la}+\eta\right)^{-1-\frac{d}{2}}\quad \text{and}\quad A_3=c_2(d)\left(\abs{\la}+\eta\right)^{-\frac{d}{2}-2}.
\end{equation}
Inserting equation \eqref{NEWA1A2A3} in \eqref{NEWINFOO2} and using the fact that $\ell=1-\alpha$, we get 
\begin{equation}\label{NEWINFOO3}
\|v\|_{L^2(\Gamma_1)}=o(1).
\end{equation}
It follows, from \eqref{SP4}, that equation \eqref{estim4} holds. The proof has been completed.

\end{proof}
%%%%%%%%%%%%%%%%%%%%%%%%%%%%%%%%%%%%%%%%%%%%%%%%%%%%%%%%%%%%%%%%%%%%%%%%%%%%%%%%%%%%%%%%%%%%%%%%%%%%%%%%%%%%%%%%%%%%%%%%%%%%%%%%%%%%%%%%%%%%%%%%%%%%%%%%
\begin{lemma}\label{ynu} Assume that $\eta>0$, $b$ small enough and $\Gamma$ satisfies the geometric boundary condition ${\rm (MGC)}$. Then, the solution $(u,v,y,z,\omega)\in D(\mathcal{A})$ of \eqref{SP4}-\eqref{SP8} satisfies the following asymptotic behavior estimation 
	\begin{equation}\label{nn4}
		\|\partial_{\nu}y\|_{L^2(\Gamma_1)}=O(1).		%\int_{\Gamma_1}\left|\frac{\partial y}{\partial\nu}\right|^2d\Gamma_1=O(1).
	\end{equation}
\end{lemma}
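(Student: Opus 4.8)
The plan is to prove \eqref{nn4} by a Rellich--type multiplier estimate for the equation \eqref{sp12} satisfied by $y$, exploiting that $y$ carries a full Dirichlet condition on $\Gamma$. Since $\overline{\Gamma_0}\cap\overline{\Gamma}_1=\emptyset$ and $\Gamma$ satisfies \textbf{(MGC)}, I fix $x_0\in\R^d$ and $m_0>0$ as in Definition \ref{MGC}, put $m(x)=x-x_0$, take $\theta$ as in \eqref{theta} but chosen moreover of class $C^1$ on $\overline\Omega$, and set $h:=\theta\,m\in C^1(\overline\Omega,\R^d)$. Then $h$ vanishes on a neighbourhood of $\Gamma_0$ while $h\cdot\nu=m\cdot\nu\geq m_0>0$ on $\Gamma_1$. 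The first step is to multiply \eqref{sp12} by $2(h\cdot\nabla\bar y)$, integrate over $\Omega$, and take the real part.

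After the integrations by parts (Green's formula on $\int_\Omega\Delta y\,(h\cdot\nabla\bar y)\,dx$, together with $2\Re\bigl(\la^2\int_\Omega y\,(h\cdot\nabla\bar y)\,dx\bigr)=-\la^2\int_\Omega(\mathrm{div}\,h)\abs{y}^2\,dx$), and using $y=0$ on $\Gamma$ --- so that $\nabla y=(\partial_\nu y)\nu$ and $\abs{\nabla y}^2=\abs{\partial_\nu y}^2$ on $\Gamma$ --- every boundary contribution collapses into $\int_\Gamma(h\cdot\nu)\abs{\partial_\nu y}^2\,d\Gamma$. Because $h\equiv 0$ near $\Gamma_0$, this equals $\int_{\Gamma_1}(m\cdot\nu)\abs{\partial_\nu y}^2\,d\Gamma\geq m_0\|\partial_\nu y\|_{L^2(\Gamma_1)}^2$ (and $\partial_\nu y\in L^2(\Gamma_1)$ since $U\in D(\mathcal{A})$ forces $y\in H^2(\Omega)$). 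The remaining terms on the other side of the identity are: $\la^2\int_\Omega(\mathrm{div}\,h)\abs{y}^2\,dx$; a quadratic form $2\Re\sum_{j,k}\int_\Omega(\partial_j h_k)\,\partial_j y\,\partial_k\bar y\,dx$ in $\nabla y$ with bounded coefficients; $-\int_\Omega(\mathrm{div}\,h)\abs{\nabla y}^2\,dx$; the coupling term $-2\Re\bigl(i\la b\int_\Omega u\,(h\cdot\nabla\bar y)\,dx\bigr)$; and the forcing term $2\Re\int_\Omega R\,(h\cdot\nabla\bar y)\,dx$, where $R=-\frac{f_4}{\la^{\ell}}-\frac{i\la f_3}{\la^{\ell}}+\frac{bf_1}{\la^{\ell}}$ is the right-hand side of \eqref{sp12}.

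It then remains to see each of these is $O(1)$. From $\|U\|_{\HH}=1$ one has $\|\nabla y\|_{L^2(\Omega)}=O(1)$, and from \eqref{estim1}--\eqref{estim2} of Lemma \ref{information} one has $\|\la u\|_{L^2(\Omega)}=O(1)$ and $\|\la y\|_{L^2(\Omega)}=O(1)$. Hence the first three interior terms are bounded by $\|\mathrm{div}\,h\|_{\infty}\|\la y\|_{L^2}^2$ and $C\|\nabla h\|_{\infty}\|\nabla y\|_{L^2}^2$, i.e. $O(1)$, and the coupling term by $2\abs{b}\,\|h\|_{\infty}\,\|\la u\|_{L^2}\,\|\nabla y\|_{L^2}=O(1)$. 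For the forcing: the $f_4$ and $bf_1$ pieces of $R$ tend to $0$ in $L^2(\Omega)$ by \eqref{SP5}--\eqref{SP7}, so contribute $o(1)\cdot O(1)$; for the $\frac{i\la f_3}{\la^{\ell}}$ piece, since $\frac{f_3}{\la^{\ell}}\to 0$ in $H_0^1(\Omega)$, I would integrate $\int_\Omega\frac{f_3}{\la^{\ell}}(h\cdot\nabla\bar y)\,dx$ by parts to transfer the gradient onto $\frac{f_3}{\la^{\ell}}$ (the boundary term vanishing because $y=0$ on $\Gamma$), which bounds it by $\abs{\la}\bigl(\|h\|_{\infty}\|\nabla(\la^{-\ell}f_3)\|_{L^2}+\|\mathrm{div}\,h\|_{\infty}\|\la^{-\ell}f_3\|_{L^2}\bigr)\|\la y\|_{L^2}=o(1)\cdot O(1)$. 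Collecting everything, $m_0\|\partial_\nu y\|_{L^2(\Gamma_1)}^2=O(1)$, which is \eqref{nn4}.

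The only real difficulty is the boundary term over $\Gamma_0$: the naive choice $m\cdot\nabla\bar y$ leaves $\int_{\Gamma_0}(m\cdot\nu)\abs{\partial_\nu y}^2\,d\Gamma$, whose sign ($m\cdot\nu\leq 0$ on $\Gamma_0$) is useless for an upper bound and which is not otherwise controlled; the cutoff $\theta$, available precisely because $\overline{\Gamma_0}\cap\overline{\Gamma}_1=\emptyset$, makes it disappear at the mild cost of the $\mathrm{div}\,h$ and $\nabla h$ interior terms, which are harmless since they only involve $\|\nabla y\|_{L^2}$ and $\|\la y\|_{L^2}$. Incidentally, $b$ small is not actually needed for this lemma --- the coupling term is $O(1)$ for any fixed $b$ --- so the hypothesis is retained only to remain within the standing assumptions.
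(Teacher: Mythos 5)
Your argument is correct and is essentially the paper's own proof: the same multiplier $2\theta(m\cdot\nabla\bar y)$ (your $h=\theta m$), the same Green-formula manipulations using $y=0$ on $\Gamma$ so that the boundary contribution collapses to $\int_{\Gamma_1}(m\cdot\nu)\abs{\partial_\nu y}^2\,d\Gamma\geq m_0\|\partial_\nu y\|^2_{L^2(\Gamma_1)}$, and the same interior bounds via $\|\la y\|_{L^2}=O(1)$, $\|\nabla y\|_{L^2}=O(1)$, $\|\la u\|_{L^2}=O(1)$; your observation that smallness of $b$ is not needed here is also accurate. The only blemish is a bookkeeping slip in the $\frac{i\la f_3}{\la^{\ell}}$ term, where your displayed bound carries an extra factor $\abs{\la}$ next to $\|\la y\|_{L^2}$ — the correct bound after integrating by parts is $\|h\|_{\infty}\|\nabla(\la^{-\ell}f_3)\|_{L^2}\|\la y\|_{L^2}+\ldots=o(1)$, since $\abs{\la}\,\|y\|_{L^2}=\|\la y\|_{L^2}$ already absorbs that factor.
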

\begin{proof}
	Multiplying equation \eqref{sp12} by $2\theta(m\cdot\nabla\bar{y})$, we obtain  
	\begin{equation}\label{SP19}
		\begin{split}
			2\int_{\Omega}\la^2y\theta(m\cdot\nabla\bar{y})dx+2\int_{\Omega}\Delta y\theta(m\cdot\nabla\bar{y})dx+2i\int_{\Omega}\la bu\theta(m\cdot\nabla\bar{y})dx=\\
			-2\int_{\Omega}\theta\left(\frac{f_4}{\la^{\ell}}+\frac{i\la f_3}{\la^{\ell}}-\frac{bf_1}{\la^{\ell}}\right)(m\cdot \nabla\bar{y})dx.
		\end{split}
	\end{equation}
	First, using the facts that $\nabla y$ is bounded in $L^2(\Omega)$, $\|f_1\|_{H_{\Gamma_0}^1(\Omega)}=o(1)$ and $\|f_4\|_{L^2(\Omega)}=o(1)$, we get 
	\begin{equation}\label{estim5}
		-2\int_{\Omega}\theta\left(\frac{f_4}{\la^{\ell}}-b\frac{f_1}{\la^{\ell}}\right)\left(m\cdot \nabla\bar{y}\right)dx=\frac{o(1)}{\la^{\ell}}.
	\end{equation}
	On the other hand, using Green formula and the fact that $y=0$ on $\Gamma$, we get 
	\begin{equation*}
		-2\int_{\Omega}\frac{i\la\theta f_3}{\lambda^{\ell}}(m\cdot \nabla\bar{y})dx=2\int_{\Omega}i\frac{\la \bar{y}\cdot\nabla(\theta mf_3)}{\la^{\ell}},
	\end{equation*}
using the fact that $\|f_3\|_{H_0^1(\Omega)}=o(1)$ and equation \eqref{estim2}   in the above equation, we get	\begin{equation}\label{estim7}
		-2\int_{\Omega}\frac{i\la\theta f_3}{\la^{\ell}}(m\cdot \nabla\bar{y})dx=\frac{o(1)}{\la^{\ell}}.
	\end{equation}
	Next, using Green formula and the fact $y=0$ on $\Gamma$,  we get 
	\begin{equation}\label{SP20}
		2\int_{\Omega}\la^2y\theta\left(m\cdot\nabla\bar{y}\right)dx=-\int_{\Omega}\left(d\theta+(m\cdot\nabla\theta)\right)|\la y|^2dx.
	\end{equation}
	%	Remark that, the regularity of $U$ in $D(\mathcal{A})$ is sufficiently for effected integration by part for the second term on left hand side in equation \eqref{SP19}, then using Green formula for this term we get 
	Now, using the Green formula,  we get  
	\begin{equation}\label{SP21}
		2\Re\left(\int_{\Omega}\Delta y\theta(m\cdot\nabla\bar{y})dx\right)=-2\Re\left(\int_{\Omega}\nabla y\cdot\nabla\left(\theta(m\cdot\nabla\bar{y})\right)dx\right)+2\int_{\Gamma}\theta(\partial_{\nu} y)\left(m\cdot \nabla\bar{y}\right)d\Gamma.
	\end{equation}
	Furthermore, using Green formula, we get 
	\begin{equation}\label{SP22}
		\left\{\begin{array}{lll}
			\displaystyle
			-2\Re\left(\int_{\Omega}\nabla y\cdot\nabla\left(\theta(m\cdot\nabla\bar{y})\right)dx\right)&=&\displaystyle
			\int_{\Omega}(m\cdot\nabla \theta)|\nabla y|^2dx+(d-2)\int_{\Omega}\theta|\nabla y|^2dx\\
			&&\displaystyle 
			-\int_{\Gamma}\theta(m\cdot\nu)|\nabla y|^2d\Gamma\\
			&&\displaystyle
			-2\Re\left(\int_{\Omega}(\nabla y\cdot\nabla\theta)(m\cdot \nabla\bar{y})dx\right).
		\end{array}\right.
	\end{equation}
	Then, combining equation \eqref{SP21}, \eqref{SP22} and  using the fact that  $y=\frac{\partial y}{\partial\tau}=0$ on $\Gamma$,
	%and using equation \eqref{SP2}
	 we get 
	\begin{equation}\label{SP23}
		\left\{\begin{array}{lll}
			\displaystyle
			2\Re\left(\int_{\Omega}\Delta y\theta(m\cdot\nabla\bar{y})dx\right)&=&\displaystyle
			-2\Re\left(\int_{\Omega}(\nabla y\cdot\nabla\theta)(m\cdot \nabla\bar{y})dx\right)\\
			&&\displaystyle
			-(2-d)\int_{\Omega}\theta |\nabla y|^2dx+\int_{\Gamma_1}\theta(m\cdot\nu)\left|\partial_{\nu}y\right|^2d\Gamma_1\\
			&&\displaystyle
			+\int_{\Omega}(m\cdot\nabla\theta)|\nabla y|^2dx.
		\end{array}\right.
	\end{equation}
	Inserting  equations \eqref{estim5}, \eqref{estim7}, \eqref{SP20} and \eqref{SP23} in equation \eqref{SP19}, we obtain 
	\begin{equation}\label{sp21}
		\left\{\begin{array}{lll}
			\displaystyle
			\int_{\Gamma_1}\theta(m\cdot\nu)\left|\partial_{\nu}y\right|^2d\Gamma&=&\displaystyle
			\int_{\Omega}\left(d\theta+(m\cdot\nabla\theta)\right)|\la y|^2dx-(d-2)\int_{\Omega}\theta|\nabla y|^2dx\\
			&&\displaystyle
			+2\Re\left(\int_{\Omega}(\nabla y\cdot\nabla\theta)(m\cdot \nabla\bar{y})dx\right)-\int_{\Omega}(m\cdot\nabla\theta)|\nabla y|^2dx\\
			\displaystyle
			&&\displaystyle
			-2\Re\left(i\la\int_{\Omega}bu\theta(m\cdot\nabla\bar{y})dx\right)+\frac{o(1)}{\la^{\ell}}.
		\end{array}
		\right.
	\end{equation}
	Finally, using equation \eqref{estim2}, the fact that $\nabla y$ is bounded in $L^2(\Omega)$, $\theta=1$ on $\Gamma_1$ and $\ell=1-\alpha$, we obtain the desired equation \eqref{nn4}. The proof is thus complete.
	%Finally, using Lemma \ref{information} and the fact that  $\nabla y$ are bounded in $L^2(\Omega)$in  equation \eqref{sp21}, we obtain \eqref{nn4}.
	%	\begin{equation}\label{sp22}
	%	\int_{\Gamma_1}\left|\frac{\partial y}{\partial\nu}\right|d\Gamma_1=O(1).
	%	\end{equation}
\end{proof}
\begin{lemma}\label{laulay} Assume that $\eta>0$, $b$ small enough and $\Gamma$ satisfies the geometric condition ${\rm (MGC)}$. Then the solution $(u,v,y,z,\omega)\in D(\mathcal{A})$ of \eqref{SP4}-\eqref{SP8} satisfies the following asymptotic behavior estimation  
	\begin{equation}\label{nn6}
		\displaystyle{\int_{\Omega}|\la u|^2dx-\int_{\Omega}|\la y|^2dx=o(1)}.
	\end{equation}
\end{lemma}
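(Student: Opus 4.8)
The plan is to work with the velocity equations \eqref{SP5} and \eqref{SP7} rather than with the second‑order equations \eqref{sp11}--\eqref{sp12}. I would multiply \eqref{SP5} by $\bar z$ and \eqref{SP7} by $\bar v$, integrate over $\Omega$, and add the two identities. After applying Green's formula to $\int_\Omega\Delta u\,\bar z\,dx$ and $\int_\Omega\Delta y\,\bar v\,dx$, I substitute $v=i\la u-\la^{-\ell}f_1$ and $z=i\la y-\la^{-\ell}f_3$ (from \eqref{SP4} and \eqref{SP6}) into the gradient terms that appear. Here one uses that $z=0$ on $\Gamma$ (since $y,f_3\in H_0^1(\Omega)$) and $v=0$ on $\Gamma_0$ (since $u,f_1\in H_{\Gamma_0}^1(\Omega)$), so that every boundary contribution drops out except $\int_{\Gamma_1}(\partial_\nu y)\bar v\,d\Gamma$. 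After summing, the two Laplacian cross‑terms assemble into $2i\la\,\Re\!\big(\int_\Omega\nabla u\cdot\nabla\bar y\,dx\big)$, which is purely imaginary, while the coupling contributions $b\|z\|_{L^2(\Omega)}^2$ and $-b\|v\|_{L^2(\Omega)}^2$ do \emph{not} cancel.

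The second step is to take the real part of the resulting identity. The purely imaginary cross‑gradient term disappears, and one is left with
\[
b\left(\|z\|_{L^2(\Omega)}^2-\|v\|_{L^2(\Omega)}^2\right)=\Re\!\left(\int_{\Gamma_1}(\partial_\nu y)\bar v\,d\Gamma\right)+\frac{1}{\la^{\ell}}\,\Re\!\left(\int_\Omega\big(\nabla u\cdot\nabla\bar f_3+\nabla y\cdot\nabla\bar f_1+f_2\bar z+f_4\bar v\big)dx\right).
\]
Each term on the right is $o(1)$: the four volume terms by Cauchy--Schwarz, using $\ell=1-\alpha>0$, the uniform bounds $\|\nabla u\|_{L^2},\|\nabla y\|_{L^2},\|v\|_{L^2},\|z\|_{L^2}\le\|U\|_{\mathcal H}=1$, and $\|f_1\|_{H^1},\|f_3\|_{H^1},\|f_2\|_{L^2},\|f_4\|_{L^2}\to0$; and the boundary term by Cauchy--Schwarz on $\Gamma_1$ together with $\|\partial_\nu y\|_{L^2(\Gamma_1)}=O(1)$ from Lemma \ref{ynu} (equation \eqref{nn4}) and $\|v\|_{L^2(\Gamma_1)}=o(1)$ from \eqref{NEWINFOO3}. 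Since $b$ is a fixed nonzero constant, this gives $\|v\|_{L^2(\Omega)}^2-\|z\|_{L^2(\Omega)}^2=o(1)$.

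Finally I would pass back from the velocities to $\la u$ and $\la y$. From \eqref{SP4}, $\|v\|_{L^2(\Omega)}^2=\la^2\|u\|_{L^2(\Omega)}^2-2\la^{1-\ell}\Re\!\big(i\!\int_\Omega u\bar f_1\,dx\big)+\la^{-2\ell}\|f_1\|_{L^2(\Omega)}^2$, and the cross term is $o(1)$ because $\|u\|_{L^2(\Omega)}=O(1/\la)$ by \eqref{estim1}; hence $\int_\Omega|\la u|^2dx=\|v\|_{L^2(\Omega)}^2+o(1)$, and likewise $\int_\Omega|\la y|^2dx=\|z\|_{L^2(\Omega)}^2+o(1)$ via \eqref{SP6} and \eqref{estim2}. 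Combining with the previous step yields $\int_\Omega|\la u|^2dx-\int_\Omega|\la y|^2dx=o(1)$, which is \eqref{nn6}.

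The delicate point is controlling the boundary term $\int_{\Gamma_1}(\partial_\nu y)\bar v\,d\Gamma$, and this is exactly why the earlier lemmas are indispensable: the $O(1)$ Neumann‑trace bound for $y$ rests on the multiplier identity valid under the \textbf{(MGC)} condition (Lemma \ref{ynu}), and the $o(1)$ bound on the velocity trace $v|_{\Gamma_1}$ comes from the fractional boundary dissipation with $\eta>0$ (Lemma \ref{information}). It is equally important to use the velocity equations and take the \emph{real} part, so that the uncontrollable quantity $\la\,\Re\!\big(\int_\Omega\nabla u\cdot\nabla\bar y\,dx\big)$ drops out: cross‑multiplying \eqref{sp11}--\eqref{sp12} by $\bar y,\bar u$ and taking the imaginary part — the naive analogue of Step~1 in the proof of Lemma \ref{ker} — would only control $\la\big(\|u\|_{L^2(\Omega)}^2-\|y\|_{L^2(\Omega)}^2\big)$, which is off by a factor $\la^{\alpha}$ and therefore insufficient.
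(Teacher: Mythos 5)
Your proof is correct, and it takes a route that is genuinely different in its details from the paper's, even though the two are built on the same two pillars (the $O(1)$ bound on $\|\partial_\nu y\|_{L^2(\Gamma_1)}$ from the \textbf{(MGC)} multiplier identity of Lemma \ref{ynu}, and the $o(1)$ trace bound on $\Gamma_1$ coming from the fractional dissipation with $\eta>0$). The paper works with the second-order equations \eqref{sp11}--\eqref{sp12}, multiplies them by $\la\bar y$ and $\la\bar u$, and extracts \eqref{nn6} from the \emph{imaginary} part; you work with the velocity equations \eqref{SP5} and \eqref{SP7}, test them against $\bar z$ and $\bar v$, and extract the same information from the \emph{real} part, since $z\approx i\la y$ and $v\approx i\la u$ these are morally the same multipliers. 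What your version buys is a real simplification of the bookkeeping: in the paper's identity the data contribution contains the term $i\la^{2-\ell}\int_\Omega f_1\bar y\,dx$, which is only $o(\la^{\alpha})$ and \emph{not} individually negligible, so the paper must run the auxiliary computations \eqref{estim10}--\eqref{estim12} (testing \eqref{sp11}--\eqref{sp12} against $\bar f_3$ and $\bar f_1$) and rely on the fact that the resulting terms $\la^{1-2\ell}\int f_1\bar f_3$ and $\la^{1-2\ell}\int f_3\bar f_1$ sum to a real quantity that drops out of the imaginary part. In your formulation every data term carries the factor $\la^{-\ell}$ with no extra power of $\la$, so all of them are directly $o(1)$ by Cauchy--Schwarz and no such cancellation is needed. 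Two small remarks: the cross-gradient term is $-2i\la\,\Re\bigl(\int_\Omega\nabla u\cdot\nabla\bar y\,dx\bigr)$ rather than $+2i\la\,\Re(\cdot)$, which is immaterial since you only use that it is purely imaginary; and you invoke \eqref{NEWINFOO3}, which is an intermediate equation inside the proof of Lemma \ref{information} rather than one of its stated conclusions --- harmless, but you could equally well use \eqref{estim4} together with \eqref{SP4} and the trace bound on $f_1$. Your closing observation about why the unweighted imaginary-part pairing (Step 1 of Lemma \ref{ker}) loses a factor $\la^{\alpha}$ is accurate and is precisely the reason the paper inserts the extra weight $\la$ in its multipliers.
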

\begin{proof}
	Multiplying equations \eqref{sp11} and \eqref{sp12} by $\la\bar{y}$ and $\la\bar{u}$ respectively, integrate over $\Omega$, using Green formula, we obtain 
	%and the fact that $\la u$ and $\la y$ are bounded in $L^2(\Omega)$,
	\begin{equation}\label{SP31}
		\int_{\Omega}\la^3u\bar{y}dx-\la\int_{\Omega}\nabla u\nabla\bar{y}dx-ib\int_{\Omega}|\la y|^2dx=-\int_{\Omega}\left(\frac{f_2}{\la^{\ell}}+i\frac{\la f_1}{\la^{\ell}}+\frac{bf_3}{\la^{\ell}}\right)\la \bar{y}dx
	\end{equation}
	and 
	\begin{equation}\label{SP32}
			\int_{\Omega}\la^3y\bar{u}dx-\la\int_{\Omega}\nabla y\nabla\bar{u}dx+\la\int_{\Gamma_1}(\partial_{\nu}y)\bar{u}d\Gamma_1+ib\int_{\Omega}|\la u|^2dx=
			-\int_{\Omega}\left(\frac{f_4}{\la^{\ell}}+\frac{i\la f_3}{\la^{\ell}}-\frac{bf_1}{\la^{\ell}}\right)\la\bar{u}dx.
	\end{equation}
	First, using equations \eqref{estim1}, \eqref{estim2} and the facts that  $\|f_1\|_{H_{\Gamma_0}^1(\Omega)}=o(1)$, $\|f_2\|_{L^2(\Omega)}=o(1)$, $\|f_3\|_{H_0^1(\Omega)}=o(1)$,  $\|f_4\|_{L^2(\Omega)}=o(1)$, we get 
	\begin{eqnarray}
		\int_{\Omega}\left(\frac{f_2}{\la^{\ell}}+\frac{bf_3}{\la^{\ell}}\right)\la \bar{y}dx&=&\frac{o(1)}{\la^{\ell}},\label{estim8}\\
		\int_{\Omega}\left(\frac{f_4}{\la^{\ell}}-\frac{bf_1}{\la^{\ell}}\right)\la \bar{u}dx&=&\frac{o(1)}{\la^{\ell}}.\label{estim9}
	\end{eqnarray}
	Using  Lemma \ref{information} and Equation \eqref{nn4}, we get 
	\begin{equation}\label{SP33}
		\la\int_{\Gamma_1}\frac{\partial y}{\partial\nu}\bar{u}d\Gamma_1=o(1).
	\end{equation}
	Next, multiplying equations \eqref{sp11} and \eqref{sp12} respectively by $\bar{f}_3$ and $\bar{f}_1$ and  integrating in $\Omega$, we get
	%	\begin{eqnarray}
	%	&=&,\label{SP31}\\
	%	&=&\frac{o(1)}{\la^{\ell-1}}.\label{SP32}
	%	\end{eqnarray}
	%	Using equation \eqref{nn3}  and Lemma \ref{ynu}, we get 
	%	\begin{equation}\label{SP33}
	%	\la\int_{\Gamma_1}\frac{\partial y}{\partial\nu}\bar{u}d\Gamma_1=\frac{o(1)}{\la^{\frac{\ell}{2}+\alpha-1}}.
	%	\end{equation}
	%	Inserting equation \eqref{SP33} in equation \eqref{SP32}, we get 
	%	\begin{equation}\label{SP34}
	%	\int_{\Omega}\la^3y\bar{u}dx-\la\int_{\Omega}\nabla y\nabla\bar{u}dx+ib\int_{\Omega}|\la u|^2dx=\frac{o(1)}{\la^{\min(\ell-1,\frac{\ell}{2}+\alpha-1)}}.
	%	\end{equation}
	%	Adding equation \eqref{SP31} and \eqref{SP34}. Then, tacking the imaginary part, we get \eqref{nn6}. 
	%	\begin{equation}\label{SP35}
	%	\int_{\Omega}|\la u|^2dx-\int_{\Omega}|\la y|^2dx=\frac{o(1)}{\la^{\frac{\ell}{2}-1}}.
	%	\end{equation}
	%	Thus the  proof is complete. 
	\begin{equation}\label{estim10}
		\int_{\Omega}\la^2u\bar{f_3}-\int_{\Omega}\nabla u\nabla\bar{f}_3dx-i\la b\int_{\Omega}y\bar{f}_3dx=-\int_{\Omega}\left(\frac{f_2}{\la^{\ell}}+i\frac{\la f_1}{\la^{\ell}}+\frac{bf_3}{\la^{\ell}}\right)\bar{f}_3dx
	\end{equation}
	and 
	\begin{equation}\label{Estim11}
		\begin{split}
			\int_{\Omega}\la^2y\bar{f}_1dx-\int_{\Omega}\nabla y\nabla\bar{f}_1dx+\int_{\Gamma_1}(\partial_{\nu}y)\bar{f}_1d\Gamma_1+i\la b\int_{\Omega}u\bar{f}_1dx\\
			=-\int_{\Omega}\left(\frac{f_4}{\la^{\ell}}+i\frac{\la f_3}{\la^{\ell}}-\frac{bf_1}{\la^{\ell}}\right)\bar{f}_1dx.
		\end{split}
	\end{equation}
	Using equations \eqref{estim1}, \eqref{estim2} and \eqref{nn4} in  \eqref{estim10}-\eqref{Estim11}, then using  the facts that  $\|f_1\|_{H_{\Gamma_0}^1(\Omega)}=o(1)$, $\|f_2\|_{L^2(\Omega)}=o(1)$, $\|f_3\|_{H_0^1(\Omega)}=o(1)$, $\|f_4\|_{L^2(\Omega)}=o(1)$ and $\nabla u$, $\nabla y$ are bounded in $L^2(\Omega)$, we obtain 
	\begin{eqnarray}
		\int_{\Omega}\la^2u\bar{f}_3dx&=&-i\int_{\Omega}\frac{\la f_1\bar{f}_3}{\la^{\ell}}dx+o(1),\label{estim11}\\
		\int_{\Omega}\la^2y\bar{f}_1dx&=&-i\int_{\Omega}\frac{\la f_3\bar{f}_1}{\la^{\ell}}dx+o(1).\label{estim12}
	\end{eqnarray}
	Now, combining equation \eqref{SP31}, \eqref{estim8} and \eqref{estim12} we get 
	\begin{equation}\label{estim13}
		\int_{\Omega}\la^3u\bar{y}dx-\la \int_{\Omega}\nabla u\nabla\bar{y}dx-ib\int_{\Omega}|\la y|^2dx=\int_{\Omega}\frac{\la \bar{f}_3f_1}{\la^{2\ell}}dx+\frac{o(1)}{\la^{\ell}}.
	\end{equation}
	On the other hand, combining equation \eqref{SP32}, \eqref{estim9}, \eqref{SP33} and \eqref{estim11}, we obtain
	\begin{equation}\label{estim14}
		\int_{\Omega}\la^3y\bar{u}dx-\la\int_{\Omega}\nabla y\nabla\bar{u}dx+ib\int_{\Omega}|\la u|^2dx=\int_{\Omega}\frac{\la\bar{f}_1f_3}{\la^{2\ell}}dx+o(1).
	\end{equation}
	Finally, adding equations \eqref{estim13} and \eqref{estim14}, using the fact that $\ell=1-\alpha$ and taking the imaginary part, we get the desired equation \eqref{nn6}. The proof is thus complete.
\end{proof}
%%%%%%%%%%%%%%%%%%%%%%%%%%%%%%%%%%%%%%%%%%%%%%%%%%%%%%%%%%%%%%%%%%%%%%%%%%%%%%%%%%%%%%%%%%%%%%%%%%%%%%%%%%%%%%%%%%%%%%%%%%%%%%%%%%%%%%%%%%%%%%%%%%%%%%%%%%%%%
\begin{lemma}\label{laugradu}Assume that $\eta>0$. Then the solution $(u,v,y,z,\omega)\in D(\mathcal{A})$ of \eqref{SP4}-\eqref{SP8} satisfies the following asymptotic behavior estimation
	\begin{equation}\label{nn7}
		\displaystyle{\int_{\Omega}|\la u|^2dx-\int_{\Omega}|\nabla u|^2dx=\frac{O(1)}{\la}+\frac{o(1)}{\la^{1-\alpha}}}\quad \text{and}\quad  \int_{\Omega}|\la y|^2dx-\int_{\Omega}|\nabla y|^2dx=\frac{O(1)}{\la}+\frac{o(1)}{\la^{1-\alpha}}.
	\end{equation}
\end{lemma}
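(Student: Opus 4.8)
The plan is to run the classical energy identity: I would multiply the reduced second--order equations \eqref{sp11} and \eqref{sp12} by $\bar u$ and $\bar y$ respectively, integrate over $\Omega$, apply Green's formula, and then control every term on the right--hand side using the a priori estimates already obtained in Lemma \ref{information}.

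Concretely, multiplying \eqref{sp11} by $\bar u$, integrating over $\Omega$, using Green's formula together with $u=0$ on $\Gamma_0$ and taking real parts yields
\begin{equation*}
\int_{\Omega}|\la u|^2\,dx-\int_{\Omega}|\nabla u|^2\,dx=\Re\left(-\int_{\Gamma_1}(\partial_\nu u)\bar u\,d\Gamma+i\la b\int_{\Omega}y\bar u\,dx-\frac{1}{\la^{\ell}}\int_{\Omega}(f_2+i\la f_1+bf_3)\bar u\,dx\right).
\end{equation*}
The boundary term is bounded by $\|\partial_\nu u\|_{L^2(\Gamma_1)}\,\|u\|_{L^2(\Gamma_1)}$, hence by \eqref{estim3} and \eqref{estim4} it is $o(1)/\la^{1+\frac{1-\alpha}{2}}$, which is absorbed into $O(1)/\la$. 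The coupling term equals $ib\int_{\Omega}(\la y)\bar u\,dx$, so by \eqref{estim1} and \eqref{estim2} --- which give $\|\la y\|_{L^2(\Omega)}=O(1)$ and $\|u\|_{L^2(\Omega)}=O(1)/\la$ --- it is $O(1)/\la$. For the forcing terms, Cauchy--Schwarz together with $\|f_j\|\to 0$, $\|u\|_{L^2(\Omega)}=O(1)/\la$ and $\|\la u\|_{L^2(\Omega)}=O(1)$ shows that the terms carrying $f_2$ and $bf_3$ are $o(1)/\la^{1+\ell}$, while the dominant one, $\la^{-\ell}\int_{\Omega}f_1(\la\bar u)\,dx$, is $o(1)/\la^{\ell}=o(1)/\la^{1-\alpha}$ since $\ell=1-\alpha$. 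Collecting these contributions gives the first identity in \eqref{nn7}.

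The second identity is obtained by the same computation applied to \eqref{sp12} tested against $\bar y$: since $y=0$ on all of $\Gamma$ there is no boundary term, so Green's formula gives $\int_{\Omega}\Delta y\,\bar y\,dx=-\int_{\Omega}|\nabla y|^2\,dx$; the coupling term $i\la b\int_{\Omega}u\bar y\,dx=ib\int_{\Omega}(\la u)\bar y\,dx$ is again $O(1)/\la$ by \eqref{estim1} and \eqref{estim2}, and the forcing terms are estimated exactly as before, the leading one being $\la^{-\ell}\int_{\Omega}f_3(\la\bar y)\,dx=o(1)/\la^{1-\alpha}$. This proves $\int_{\Omega}|\la y|^2\,dx-\int_{\Omega}|\nabla y|^2\,dx=O(1)/\la+o(1)/\la^{1-\alpha}$.

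No genuine difficulty is expected: this is merely testing each equation against the solution itself, and all error terms have already been quantified in Lemma \ref{information}. The only point requiring care is the bookkeeping of the powers of $\la$ --- in particular recognising that the $i\la f_j$ forcing terms pair with $\la u$ (resp.\ $\la y$), which is only $O(1)$, rather than with $u$ (resp.\ $y$), so that these terms contribute at order $\la^{-(1-\alpha)}$ and not better, whereas the velocity--coupling terms are the only ones producing the $O(1)/\la$ on the right--hand side.
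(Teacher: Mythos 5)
Your proposal is correct and follows exactly the paper's own argument: the paper likewise multiplies \eqref{sp11} and \eqref{sp12} by $\bar u$ and $\bar y$, integrates by parts to obtain the identities \eqref{SP36}--\eqref{gtg36}, and then invokes Lemma \ref{information} together with $\|f_j\|=o(1)$ and $\ell=1-\alpha$ to conclude. Your more explicit bookkeeping of which term produces the $O(1)/\la$ (the velocity coupling) and which produces the $o(1)/\la^{1-\alpha}$ (the $i\la f_1$, resp.\ $i\la f_3$, forcing paired with $\la\bar u$, resp.\ $\la\bar y$) is exactly what the paper leaves implicit.
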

\begin{proof}
Multiplying equations \eqref{sp11} and \eqref{sp12} by $\bar{u}$ and $\bar{y}$ respectively, integrating in $\Omega$, then using Green formula, we get 
	\begin{equation}\label{SP36}
		\int_{\Omega}|\la u|^2dx-\int_{\Omega}|\nabla u|^2dx+\int_{\Gamma_1}\frac{\partial u}{\partial\nu}\bar{u}d\Gamma_1-ib\int_{\Omega}\la y\bar{u}dx=-\int_{\Omega}\left(\frac{f_2}{\la^{\ell}}+\frac{i\la f_1}{\la^{\ell}}+\frac{bf_3}{\la^{\ell}}\right)\bar{u}dx
	\end{equation}
	and
	\begin{equation}\label{gtg36}
		\int_{\Omega}|\la y|^2dx-\int_{\Omega}|\nabla y|^2dx+ib\int_{\Omega}\la u\bar{y}dx=-\int_{\Omega}\left(\frac{f_4}{\la^{\ell}}+\frac{i\la f_3}{\la^{\ell}}-\frac{bf_1}{\la^{\ell}}\right)\bar{y}dx.
	\end{equation}
Using the fact that $\|f_1\|_{H_{\Gamma_0}^1(\Omega)}=o(1)$, $\|f_2\|_{L^2(\Omega)}=o(1)$, $\|f_3\|_{H_0^1(\Omega)}=o(1)$, $\|f_4\|_{L^2(\Omega)}=o(1)$, Lemma \ref{information} and $\ell=1-\alpha$, we get the desired equation \eqref{nn7}. The proof is thus complete.
\end{proof}
%%%%%%%%%%%%%%%%%%%%%%%%%%%%%%%%%%%%%%%%%%%%%%%%%%%%%%%%%%%%%%%%%%%%%%%%%%%%%%%%%%%%%%%%%%%%%%%%%%%%%%%%%%%%%%%%%%%%%%%%%%%%%%%%%%%%%%%%%%%%%%%%%%%%%%%%%%%%%%%
\begin{lemma}\label{lauo} Assume that $\eta>0$. If $|b|\leq \frac{1}{\|m\|_{\infty}}$, then the solution $(u,v,y,z,\omega)\in D(\mathcal{A})$ of \eqref{SP4}-\eqref{SP8} satisfies the following asymptotic behavior estimation
	\begin{equation}\label{nn8}
		\int_{\Omega}|\la u|^2dx=o(1).
	\end{equation}
\end{lemma}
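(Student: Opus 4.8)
The plan is to run the Rellich–type multiplier argument of Steps~2--3 of the proof of Lemma~\ref{ker}, but now on equation \eqref{sp11} and \emph{without} the extra information $u=\partial_{\nu}u=0$ on $\Gamma_1$. Concretely, I multiply \eqref{sp11} by $2(m\cdot\nabla\bar u)+(1-d)\bar u$, integrate over $\Omega$, take real parts, and perform the same integrations by parts as in \eqref{ST20}--\eqref{ST27}; the regularity of $U\in D(\mathcal{A})$ suffices to justify these computations. Keeping now all the boundary contributions on $\Gamma_1$ and all the terms coming from the right-hand sides of \eqref{SP4}--\eqref{SP8}, this leads to an identity of the shape
\begin{equation*}
\la^2\int_{\Omega}|u|^2dx+\int_{\Omega}|\nabla u|^2dx=\int_{\Gamma_0}(m\cdot\nu)\Big|\frac{\partial u}{\partial\nu}\Big|^2d\Gamma+B_{\Gamma_1}+\la b\,\Im\Big(\int_{\Omega}y\big((d-1)\bar u+2(m\cdot\nabla\bar u)\big)dx\Big)+R,
\end{equation*}
where $B_{\Gamma_1}$ gathers the boundary integrals on $\Gamma_1$ and $R$ gathers the terms containing $f_1,\dots,f_5$.

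Next I estimate each piece. The $\Gamma_0$–integral is $\le 0$ by \textbf{(MGC)} and is simply discarded. In $B_{\Gamma_1}$ I split $\nabla u=\big(\tfrac{\partial u}{\partial\nu}\big)\nu+\nabla_{\tau}u$ on $\Gamma_1$; the Rellich boundary term then contains $(m\cdot\nu)|\tfrac{\partial u}{\partial\nu}|^2$, a cross term $2\Re\big((m_{\tau}\cdot\nabla_{\tau}\bar u)\tfrac{\partial u}{\partial\nu}\big)$ (with $m_{\tau}$ the tangential part of $m$), and $-(m\cdot\nu)|\nabla_{\tau}u|^2$. Since $m\cdot\nu\ge m_0>0$ on $\Gamma_1$, Young's inequality makes the last, favourably signed term absorb the cross term, so that, together with the remaining $\Gamma_1$ pieces $\la^2\int_{\Gamma_1}(m\cdot\nu)|u|^2$ and $\int_{\Gamma_1}\tfrac{\partial u}{\partial\nu}\bar u$, the quantity $B_{\Gamma_1}$ is bounded by a constant times $\|\partial_{\nu}u\|_{L^2(\Gamma_1)}^2+\la^2\|u\|_{L^2(\Gamma_1)}^2+\|\partial_{\nu}u\|_{L^2(\Gamma_1)}\|u\|_{L^2(\Gamma_1)}$, which is $o(1)$ by \eqref{estim3}--\eqref{estim4}. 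For $R$, every term is $o(1)$ thanks to Lemma~\ref{information} and $\|f_j\|\to 0$, except those carrying the factor $\la f_1$; for those (in particular the one where $\la f_1$ is paired against $\nabla u$, which I first integrate by parts to move the gradient onto $f_1$) I use $\|f_1\|_{H_{\Gamma_0}^1(\Omega)}=o(1)$ together with $\|u\|_{L^2(\Omega)}=O(1)/\la$, $\|u\|_{L^2(\Gamma_1)}=o(1)/\la$ and $\ell=1-\alpha$, $\alpha<1$ (so $\la^{\alpha-1}\to 0$), to conclude they are $o(1)$ as well.

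The decisive term is the coupling term $\la b\,\Im\int_{\Omega}y\big((d-1)\bar u+2(m\cdot\nabla\bar u)\big)dx$, and this is exactly where the hypothesis $|b|\le\|m\|_{\infty}^{-1}$ is used. Its $(d-1)\bar u$–part is $o(1)$ because $\|y\|_{L^2(\Omega)}=O(1)/\la$ while $\la\|u\|_{L^2(\Omega)}=O(1)$; its $2(m\cdot\nabla\bar u)$–part is only $O(1)$, and I bound it by Cauchy--Schwarz and Young by $|b|\,\|m\|_{\infty}\big(\|\la y\|_{L^2(\Omega)}^2+\|\nabla u\|_{L^2(\Omega)}^2\big)$, after which Lemma~\ref{laulay} lets me replace $\|\la y\|_{L^2(\Omega)}^2$ by $\|\la u\|_{L^2(\Omega)}^2+o(1)$. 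Substituting back into the identity and moving this contribution to the left collapses everything to $\big(1-|b|\,\|m\|_{\infty}\big)\big(\|\la u\|_{L^2(\Omega)}^2+\|\nabla u\|_{L^2(\Omega)}^2\big)\le o(1)$, so, $|b|\,\|m\|_{\infty}$ being (strictly) less than $1$, we obtain $\int_{\Omega}|\la u|^2dx=o(1)$, i.e. \eqref{nn8}.

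The two delicate points are precisely these last two. The first is taming the tangential trace $\nabla_{\tau}u$ on $\Gamma_1$ in the Rellich boundary term — unlike in Lemma~\ref{ker}, where $u\equiv0$ on $\Gamma_1$ kills it, here it is handled only because it appears with the good sign $-(m\cdot\nu)|\nabla_{\tau}u|^2$ and $m\cdot\nu\ge m_0>0$ lets it swallow the cross term. The second, and the genuine obstacle, is the absorption of the $O(1)$ coupling term into the left-hand side: this is the single place where the smallness of $b$ is truly needed, and it requires first trading $\|\la y\|^2$ for $\|\la u\|^2$ via Lemma~\ref{laulay} before the coefficient $|b|\,\|m\|_{\infty}$ can be made to work for us.
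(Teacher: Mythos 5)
Your argument is correct and follows essentially the same route as the paper: the same Rellich multiplier $2(m\cdot\nabla\bar u)$ combined with the $(1-d)\bar u$ multiplier (which the paper packages separately as Lemma \ref{laugradu} before adding $(1-d)$ times that identity), the same use of \textbf{(MGC)} to discard the $\Gamma_0$ term, the same Young-inequality absorption of the $\Gamma_1$ boundary terms using \eqref{estim3}--\eqref{estim4}, and the same final absorption of the $O(1)$ coupling term via Lemma \ref{laulay} and the smallness of $|b|\,\|m\|_{\infty}$. The only cosmetic differences are that you split $\nabla u$ into normal and tangential parts on $\Gamma_1$ where the paper applies Young's inequality to the full gradient, and that your estimate of the coupling term produces the constant $1-|b|\,\|m\|_{\infty}$ in place of the paper's $1-b^2\|m\|_{\infty}^2$.
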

\begin{proof}
	Multiplying equation \eqref{sp11} by $2(m\cdot\nabla\bar{u})$, then integrating in $\Omega$, we get  
	%and  using the fact $\nabla u$ is bounded in $L^2(\Omega)$, we get
	\begin{equation}\label{SP38}
		\begin{split}
			2\int_{\Omega}\la^2u(m\cdot\nabla\bar{u})dx+2\int_{\Omega}\Delta u(m\cdot\nabla\bar{u})dx-2i\la b\int_{\Omega}b y(m\cdot\nabla\bar{u})dx=\\
			-2\int_{\Omega}\left(\frac{f_2}{\la^{\ell}}+\frac{i\la f_1}{\la^{\ell}}+\frac{bf_3}{\la^{\ell}}\right)(m\cdot \nabla\bar{u})dx.
		\end{split}
	\end{equation}
	First, using the fact that $\|f_2\|_{L^2(\Omega)}=o(1)$, $\|f_3\|_{H_0^1(\Omega)}=o(1)$, $\nabla \bar{u}$ is bounded in $L^2(\Omega)$, and the fact that $\ell=1-\alpha$, we get 
	\begin{equation}\label{estim15}
		-2\int_{\Omega}\left(\frac{f_2}{\la^{\ell}}+\frac{bf_3}{\la^{\ell}}\right)(m\cdot\nabla\bar{u})dx=\frac{o(1)}{\la^{1-\alpha}}.
	\end{equation}
	On the other hand
	\begin{equation}\label{estim16}
		2i\la\int_{\Omega}\frac{f_1}{\la^{\ell}}(m\cdot\nabla\bar{u})dx=-2\frac{i\la}{\la^{\ell}}\int_{\Omega}\bar{u}\cdot\nabla(f_1m)dx+\frac{2i\la}{\la^{\ell}}\int_{\Gamma_1}(mf_1\cdot\nu)\bar{u}d\Gamma_1.
	\end{equation}
	 Using equation \eqref{estim1}, \eqref{estim4} and the fact that $\|f_1\|_{H_{\Gamma_0}^1(\Omega)}=o(1)$,  $\ell=1-\alpha$ in \eqref{estim16}, we obtain
	\begin{equation}\label{estim17}
		2i\la\int_{\Omega}\frac{f_1}{\la^{\ell}}(m\cdot \nabla\bar{u})dx=\frac{o(1)}{\la^{1-\alpha}}.
	\end{equation}
	Next, using integration by parts, we get 
	\begin{equation}\label{SP39}
		2\int_{\Omega}\la^2u(m\cdot\nabla\bar{u})dx=-d\int_{\Omega}|\la u|^2dx+\la^2\int_{\Gamma_1}(m\cdot\nu)|u|^2d\Gamma_1.
	\end{equation}
	%Using Lemma \ref{information} in equation \eqref{SP39}, we get 
	Thus, using equation \eqref{estim4},  we obtain 
	\begin{equation}\label{SP40}
		\la^2\int_{\Gamma_1}(m\cdot\nu)|u|^2d\Gamma_1=o(1).
	\end{equation}
	Combining equations \eqref{SP39} and \eqref{SP40}, we get 
	\begin{equation}\label{SP41}
		2\int_{\Omega}\la^2u(m\cdot\nabla u)dx=-d\int_{\Omega}|\la u|^2dx+o(1).
	\end{equation}
	Now, using Green formula,  we get 
	\begin{equation}\label{SP42}
		2\int_{\Omega}\Delta u(m\cdot\nabla\bar{u})dx=(d-2)\int_{\Omega}|\nabla u|^2dx+2\int_{\Gamma}\frac{\partial u}{\partial\nu}(m\cdot\nabla\bar{u})d\Gamma-\int_{\Gamma}(m\cdot\nu)|\nabla u|^2d\Gamma.
	\end{equation}
	Therefore, inserting equations \eqref{estim5}, \eqref{estim7}, \eqref{SP41}, \eqref{SP42} in equation \eqref{SP38}, then taking the reel part, we get 
	\begin{equation}\label{SP43}
		%\left\{\begin{array}{l}
		\begin{split}
			\displaystyle
			-d\int_{\Omega}|\la u|^2dx+(d-2)\int_{\Omega}|\nabla u|^2dx+2\Re\left(\int_{\Gamma}\frac{\partial u}{\partial\nu}(m\cdot\nabla\bar{u})d\Gamma\right)-\int_{\Gamma}(m\cdot\nu)|\nabla u|^2d\Gamma\\
			\displaystyle
			+2\la b\Re\left(-i\int_{\Omega} y(m\cdot\nabla\bar{u})dx\right)=o(1).
			%\end{array}\right.
		\end{split}
	\end{equation}
	Using the fact $\frac{\partial u}{\partial\tau}=0$ on $\Gamma_0$, we get
	\begin{equation}\label{SP44}
		\begin{split}
			-2\Re\left(\int_{\Gamma}\frac{\partial u}{\partial\nu}(m\cdot\nabla\bar{u})d\Gamma\right)+\int_{\Gamma}(m\cdot\nu)|\nabla u|^2d\Gamma=-\int_{\Gamma_0}(m\cdot\nu)\left|\frac{\partial u}{\partial\nu}\right|^2d\Gamma\\
			-2\Re\left(\int_{\Gamma_1}\frac{\partial u}{\partial\nu}(m\cdot\nabla\bar{u})d\Gamma\right)+\int_{\Gamma_1}(m\cdot\nu)|\nabla u|^2d\Gamma
		\end{split}
	\end{equation}
	%\begin{equation}\label{SP44}
	%\left\{\begin{array}{l}
	%\displaystyle
	%=\\
	%\displaystyle
	%\\
	%\displaystyle
	%.
	%\end{array}\right.
	%	\end{equation}
	Let $\varepsilon>0$, so by Young inequality, we get 
	\begin{equation}\label{SP45}
		2\Re\left(\int_{\Gamma_1}\frac{\partial u}{\partial\nu}(m\cdot\nabla\bar{u})d\Gamma\right)\leq \frac{\|m\|_{\infty}^2}{\varepsilon}\int_{\Gamma_1}\left|\frac{\partial u}{\partial\nu}\right|^2d\Gamma_1+\varepsilon\int_{\Gamma_1}|\nabla u|^2d\Gamma.
	\end{equation}
	Using equation \eqref{estim3} in \eqref{SP45}, we get 
	%\begin{equation}\label{SP46}
	%\frac{\|m\|_{\infty}^2}{\varepsilon}\int_{\Gamma_1}\left|\frac{\partial u}{\partial\nu}\right|^2d\Gamma_1=\frac{o(1)}{\la^{\ell}},
	%\end{equation}
	%Inserting equation \eqref{SP46} in equation \eqref{SP45},
	%we get
	\begin{equation}\label{SP47}
		2\Re\left(\int_{\Gamma_1}\frac{\partial u}{\partial\nu}(m\cdot\nabla\bar{u})d\Gamma\right)\leq \varepsilon\int_{\Gamma_1}|\nabla u|^2d\Gamma+\frac{o(1)}{\la^{1-\alpha}}.
	\end{equation}
	Inserting equation \eqref{SP47} in equation \eqref{SP44}, we get 
	\begin{equation}\label{SP48}
		\begin{split}
			%\left\{\begin{array}{lll}
			%\displaystyle
			-2\Re\left(\int_{\Gamma}\frac{\partial u}{\partial\nu}(m\cdot\nabla\bar{u})d\Gamma\right)+\int_{\Gamma}(m\cdot\nu)|\nabla u|^2d\Gamma\geq
			-\int_{\Gamma_0}(m\cdot\nu)\left|\frac{\partial u}{\partial\nu}\right|^2d\Gamma\\
			%\hspace{1cm}
			-\varepsilon\int_{\Gamma_1}|\nabla u|^2d\Gamma+\int_{\Gamma_1}(m\cdot\nu)|\nabla u|^2d\Gamma+\frac{o(1)}{\la^{1-\alpha}}.
			%	\end{array}\right.
		\end{split}
	\end{equation}
	Using the $(\textbf{MGC})$ condition in equation \eqref{SP48}, we get 
	\begin{equation}\label{SP49}
		-2\Re\left(\int_{\Gamma}\frac{\partial u}{\partial\nu}(m\cdot\nabla\bar{u})d\Gamma\right)+\int_{\Gamma}(m\cdot\nu)|\nabla u|^2d\Gamma\geq \frac{o(1)}{\la^{1-\alpha}}+(\delta-\varepsilon)\int_{\Gamma_1}|\nabla u|^2d\Gamma.
	\end{equation}
	Taking $\varepsilon<\delta$, then we get 
	\begin{equation}\label{SP50}
		-2\Re\left(\int_{\Gamma}\frac{\partial u}{\partial\nu}(m\cdot\nabla\bar{u})d\Gamma\right)+\int_{\Gamma}(m\cdot\nu)|\nabla u|^2d\Gamma\geq\frac{o(1)}{\la^{1-\alpha}}.
	\end{equation}
	Inserting equation \eqref{SP50} in equation \eqref{SP43}, we get 
	\begin{equation}\label{SP51}
		d\int_{\Omega}|\la u|^2dx+(2-d)\int_{\Omega}|\nabla u|^2dx\leq 2\la b\Re\left(-i\int_{\Omega} y(m\cdot\nabla\bar{u})dx\right)+o(1).
	\end{equation}
	Multiplying equation \eqref{nn7} by $1-d$ and combining with equation \eqref{SP51}, we get
	\begin{equation}\label{SP52}
		\int_{\Omega}|\la u|^2dx+\int_{\Omega}|\nabla u|^2dx\leq 2\Re\left(-i\int_{\Omega}b\la y(m\cdot\nabla\bar{u})dx\right)+o(1).
	\end{equation}
	Using  Young inequality, we get 
	\begin{equation}\label{SP53}
		2\Re\left(-i\int_{\Omega}b\la y(m\cdot\nabla\bar{u})dx\right)\leq \|m\|_{\infty}^2b^2\int_{\Omega}|\la y|^2dx+\int_{\Omega}|\nabla u|^2dx.
	\end{equation}
	Inserting equation \eqref{SP53} in equation \eqref{SP52}, we get 
	\begin{equation}\label{SP54}
		\int_{\Omega}|\la u|^2dx+\int_{\Omega}|\nabla u|^2dx\leq\|m\|_{\infty}^2b^2\int_{\Omega}|\la y|^2dx+\int_{\Omega}|\nabla u|^2dx+o(1).
	\end{equation}
	Using Lemma \ref{laulay}, we get 
	\begin{equation}\label{SP55}
		\int_{\Omega}(1-\|m\|_{\infty}^2b^2)|\la u|^2dx\leq o(1).
	\end{equation}
	Finally, using the fact that $|b|<\frac{1}{\|m\|_{\infty}}$ in equation \eqref{SP55}, we get the desired equation \eqref{nn8}.
	%If $|b|<\frac{1}{\|m\|_{\infty}^2}$, using the fact $b$ is small enough, we get \eqref{nn8}. 
	%\begin{equation}\label{SP56}
	%\int_{\Omega}|\la u|^2dx=\frac{o(1)}{\la^{\ell-2}}.
	%\end{equation}
	Thus the  proof is complete.
\end{proof}
%%%%%%%%%%%%%%%%%%%%%%%%%%%%%%%%%%%%%%%%%%%%%%%%%%%%%%%%%%%%%%%%%%%%%%%%%%%%%%%%%%%%%%%%%%%%%%%%%%%%%%%%%%%%%%%%%%%%%%%%%%%%%%%%%%%%%%%%%%%%%%%%%%%%%%%%%%%%%%%%%%%%%%%%%%%%%%%%%%%%%%%%%%%%%%%%%%%%%%%%%%%%%%%%%%%%%%%%%%%%%%%%%%%%

\noindent\textbf{Proof of Theorem \ref{pol}}. Using \eqref{nn6}, \eqref{nn7} and \eqref{nn8}, we get 
\begin{equation}\label{estim18}
	\int_{\Omega}|\nabla u|^2dx=o(1),\ \ \int_{\Omega}|\la y|^2dx=o(1),\ \ \text{and}\ \  \int_{\Omega}|\nabla y|^2dx=o(1).
\end{equation}
It follows from \eqref{SP9}, \eqref{nn8} and \eqref{estim18}, that $\|U\|_{\mathcal{H}}=o(1)$ which contradicts \eqref{sp2}. Consequently condition ${\rm (H3)}$ holds and the energy of smooth solution of system \eqref{aug1}-\eqref{aug9} decays polynomial to zero as $t$ goes to infinity where $a=1$ and $b$ small enough. In view of the asymptotic behaviour of the eigenvalues of the operator $\mathcal{A}$ (see proposition \ref{Theorem-3.2} ), we  conjecture  that the  energy decay rate of type $t^{-\frac{2}{1-\alpha}}$ is optimal  (see \cite{Loreti-Rao:06}). The proof has been completed.
%%%%%%%%%%%%%%%%%
%%%%%%%%%%%%%%%%%
%%%%%%%%%%%%%%%%%
\section*{Conclusion}
\noindent We  studied the stabilization of multidimensional coupled wave equations via velocities with one locally boundary fractional damping acting on a part of the boundary of the domain. First, under the ${\rm\textbf{(MGC)}}$ boundary condition without, the strong stability is proved if $a=1$ and the coupling parameter term is small enough. Next, under the ${\rm(MGC)}$ boundary condition with $\overline{\Gamma_0}\cap \overline{\Gamma_1}=\emptyset$ a  polynomial energy decay rate of type $\frac{1}{t^{\frac{2}{1-\alpha}}}$ is established if $a=1$ and the coupling parameter term is small enough. Moreover, it would be interesting to 
\begin{enumerate}
\item study the strong stability of system \eqref{chap3eq1}-\eqref{chap3eq7} in case that  $a\neq 1$, $b$ is not necessarily small enough and without any additional geometric boundary control condition,
\item generalize the results obtained in \cite{akilwehbe02} (see system \eqref{AGW}) to the multidimensional case. 
\end{enumerate}

\section{Appendix}
\noindent Let $\mu$ be the function defined by 
\begin{equation}\label{muappendix}
\mu(\xi)=|\xi|^{\frac{2\alpha-d}{2}},\quad \xi\in \R^{d}\quad 
		\text{and}\quad 0<\alpha<1.
		\end{equation} 
\begin{lemma}\label{Appendix1}
Let $\eta \geq  0$, then we have 
\begin{equation*}
M_2(\alpha,d)=\gamma k(\alpha,d)\int_{\R^d}\frac{\mu^2(\xi)}{1+\eta+\abs{\xi}^2}d\xi=\gamma (1+\eta)^{\alpha-1},
\end{equation*}
where $k(\alpha,d)$ is defined in equation \eqref{InverseCaputoDerivative}.
\end{lemma}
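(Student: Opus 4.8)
The plan is a direct computation: reduce the $d$-dimensional integral to a one-dimensional one by polar coordinates, then recognize the result as a classical Beta/Gamma integral. First I would use that the integrand $|\xi|^{2\alpha-d}(1+\eta+|\xi|^2)^{-1}$ is radial. Writing $\omega_{d-1}=\frac{2\pi^{d/2}}{\Gamma(d/2)}$ for the surface measure of the unit sphere $S^{d-1}\subset\R^d$, we get
$$
\int_{\R^d}\frac{\mu^2(\xi)}{1+\eta+|\xi|^2}\,d\xi=\omega_{d-1}\int_0^{\infty}\frac{r^{2\alpha-d}\,r^{d-1}}{1+\eta+r^2}\,dr=\omega_{d-1}\int_0^{\infty}\frac{r^{2\alpha-1}}{1+\eta+r^2}\,dr,
$$
the exponent $d$ cancelling in the radial variable. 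Here $0<\alpha<1$ is exactly what guarantees convergence at $0$ (since $2\alpha-1>-1$) and at $\infty$ (since $2\alpha-1-2<-1$).

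Next I would substitute $s=r^2$, so $dr=\frac{ds}{2\sqrt{s}}$ and $r^{2\alpha-1}=s^{\alpha-\frac{1}{2}}$, turning the radial integral into $\frac{1}{2}\int_0^{\infty}\frac{s^{\alpha-1}}{1+\eta+s}\,ds$. Putting $c=1+\eta>0$ and substituting $s=ct$ gives $\frac{c^{\alpha-1}}{2}\int_0^{\infty}\frac{t^{\alpha-1}}{1+t}\,dt$. The last integral is the classical Beta integral $\int_0^{\infty}\frac{t^{\alpha-1}}{1+t}\,dt=B(\alpha,1-\alpha)=\Gamma(\alpha)\Gamma(1-\alpha)$, and Euler's reflection formula gives $\Gamma(\alpha)\Gamma(1-\alpha)=\frac{\pi}{\sin(\alpha\pi)}$. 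Collecting constants,
$$
\int_{\R^d}\frac{\mu^2(\xi)}{1+\eta+|\xi|^2}\,d\xi=\frac{2\pi^{d/2}}{\Gamma(d/2)}\cdot\frac{\pi}{2\sin(\alpha\pi)}\,(1+\eta)^{\alpha-1}=\frac{\pi^{\frac{d}{2}+1}}{\Gamma\!\left(\frac{d}{2}\right)\sin(\alpha\pi)}\,(1+\eta)^{\alpha-1}.
$$

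Finally I would multiply by $\gamma\kappa(\alpha,d)=\gamma\,\frac{2\sin(\alpha\pi)\Gamma\left(\frac{d}{2}+1\right)}{d\,\pi^{\frac{d}{2}+1}}$ from \eqref{InverseCaputoDerivative}; the factors $\sin(\alpha\pi)$ and $\pi^{\frac{d}{2}+1}$ cancel, leaving
$$
M_2(\alpha,d)=\gamma\,\frac{2\,\Gamma\!\left(\frac{d}{2}+1\right)}{d\,\Gamma\!\left(\frac{d}{2}\right)}\,(1+\eta)^{\alpha-1}.
$$
Since $\Gamma\!\left(\frac{d}{2}+1\right)=\frac{d}{2}\,\Gamma\!\left(\frac{d}{2}\right)$, the prefactor is exactly $1$, so $M_2(\alpha,d)=\gamma(1+\eta)^{\alpha-1}$. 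The only non-elementary ingredient is the evaluation $\int_0^\infty \frac{t^{\alpha-1}}{1+t}\,dt=\frac{\pi}{\sin(\alpha\pi)}$; everything else is just bookkeeping of the normalizing constants, so that identity (equivalently, the reflection formula for $\Gamma$) is the only "real" step.
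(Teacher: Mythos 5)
Your proof is correct and follows essentially the same route as the paper's: reduce to a radial integral via (hyper-)spherical coordinates, substitute to reach the Beta integral $\Gamma(\alpha)\Gamma(1-\alpha)$, and conclude with the reflection formula, with the normalizing constants cancelling exactly as you show. The only cosmetic difference is that you quote the sphere's surface measure $\omega_{d-1}$ directly where the paper writes out the hyper-spherical Jacobian, and you evaluate the Beta integral on $(0,\infty)$ rather than $(0,1)$.
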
	
\begin{proof}
Using the hyper-spherical coordinates and the fact that the Jacobian $J$ is defined by 
\begin{equation}\label{jacobian}
J=\rho^{d-1}\prod_{j=1}^{d-2}\sin^{d-1-j}\left(\phi_j\right),
\end{equation}
we get 
\begin{equation*}
M_2(\alpha,d)=\gamma k(\alpha,d)\int_0^{+\infty}\frac{\rho^{2\alpha-1}}{1+\eta+\rho^2}\left(\prod_{j=1}^{d-2}\left(\int_0^{\pi}\sin^{d-1-j}(\phi_j)d\phi_j\right)\int_0^{2\pi}d\phi_{d-1}\right)d\rho.
\end{equation*}
On the other hand, it is easy to see that 
\begin{equation}\label{ProdJacobian}
\prod_{j=1}^{d-2}\left(\int_0^{\pi}\sin^{d-1-j}(\phi_j)d\phi_j\right)\int_0^{2\pi}d\phi_{d-1}=\frac{d\pi^{\frac{d}{2}}}{\Gamma\left(\frac{d}{2}+1\right)}.
\end{equation}
This implies that 
$$
M_2(\alpha,d)=2\frac{\gamma \sin(\alpha\pi)}{\pi}\int_0^{+\infty}\frac{\rho^{2\alpha-1}}{1+\eta+\rho^2}d\rho.
$$
A direct computation gives 
$$
\begin{array}{lllll}
M_2(\alpha,d)&=&\displaystyle{\frac{\gamma \sin(\alpha\pi)}{\pi}\int_0^{+\infty}\frac{x^{\alpha-1}}{1+\eta+x}}\\\\
   % \vspace{0.15cm}&=&\displaystyle{\left(\lambda+\eta\right)^{\alpha-1}\int_1^{+\infty}
   %\frac{(y-1)^{\alpha-1}}{y}dy}\\
    \vspace{0.15cm}&=&\displaystyle{\frac{\gamma \sin(\alpha\pi)}{\pi}(1+\eta)^{\alpha-1}\int_0^1\frac{(1-z)^{\alpha-1}}{z^{\alpha}}dz}\\\\
   % \vspace{0.15cm}&=&\displaystyle{\left(\lambda+\eta\right)^{\alpha-1}B\left(1-\alpha,%\alpha\right)}&&\\
    \vspace{0.15cm}&=&\displaystyle{\frac{\gamma \sin(\alpha\pi)}{\pi}\left(1+\eta\right)^{\alpha-1}\Gamma(1-\alpha)}
    \Gamma(\alpha)\\\\
    \vspace{0.15cm}&=&\displaystyle{\gamma\left(1+\eta\right)^{\alpha-1}}.
    \end{array}
	$$
\end{proof}
%%%%%%%%%%%%%%%%%%%%%%%%%%%%%%%%%%%%%%%%%%%%%%%%%%%%%%%%%%%%%%%%%%%%%%%%%%%%%%%%%%%%%%%%%%%%%%%%%%%%%%%%%%%%%%%%%%%%%%%%%%%%%%%%%%%%%%%%%%%%%%%%%%%%%%%%%%%%%%%%%%%%%%%%%%%%%%%%%%%%%%%%%%%%%%%%%%%%%%%%%%%%%%%%%%%%%%%%%%%%%%%%%%%%%%%%%%%%%%%%%%%%%%%%%%%%%%%%%%%%%%%%%%%%%%%%%%%%%%%%%%%%%%%%%%%%%%%%%%%%%%%%%%%%%%%%%%%%%%%%%%%%%%%%%%%%%%%%%%%%%%%%%%
\begin{lemma}\label{Appendix11}
Let $\eta\geq 0$, then we have 
$$
S_1=\int_{\R^d}\frac{\abs{\xi}^{2\alpha-d+2}}{(1+\abs{\xi}^2+\eta)^2}d\xi<+\infty. 
$$
\end{lemma}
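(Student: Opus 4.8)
The plan is to reduce the $d$-dimensional integral to a one-dimensional one by passing to hyper-spherical coordinates, exactly as in the proof of Lemma \ref{Appendix1}, and then to check integrability separately near the origin and near infinity.

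First I would use the Jacobian $J=\rho^{d-1}\prod_{j=1}^{d-2}\sin^{d-1-j}(\phi_j)$ from \eqref{jacobian} together with the identity \eqref{ProdJacobian}. Since the integrand depends only on $|\xi|=\rho$, the angular integrations factor out and produce the constant $\frac{d\pi^{d/2}}{\Gamma(d/2+1)}$, leaving
\[
S_1=\frac{d\pi^{\frac{d}{2}}}{\Gamma\!\left(\frac{d}{2}+1\right)}\int_0^{+\infty}\frac{\rho^{2\alpha-d+2}\,\rho^{d-1}}{\left(1+\rho^2+\eta\right)^2}\,d\rho=\frac{d\pi^{\frac{d}{2}}}{\Gamma\!\left(\frac{d}{2}+1\right)}\int_0^{+\infty}\frac{\rho^{2\alpha+1}}{\left(1+\rho^2+\eta\right)^2}\,d\rho,
\]
so it suffices to show that the scalar integral $\displaystyle\int_0^{+\infty}\rho^{2\alpha+1}\left(1+\rho^2+\eta\right)^{-2}\,d\rho$ is finite. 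Splitting it as $\int_0^1+\int_1^{+\infty}$, on $(0,1)$ I bound $\left(1+\rho^2+\eta\right)^{-2}\le 1$, so the contribution is at most $\int_0^1\rho^{2\alpha+1}\,d\rho$, which converges because $2\alpha+1>-1$ for $\alpha\in(0,1)$; on $(1,+\infty)$ I bound $\left(1+\rho^2+\eta\right)^{-2}\le\rho^{-4}$, so the contribution is at most $\int_1^{+\infty}\rho^{2\alpha-3}\,d\rho$, which converges because $2\alpha-3<-1$ precisely when $\alpha<1$. Combining the two bounds yields $S_1<+\infty$.

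Alternatively, one may avoid coordinates altogether: the estimate $\left(1+|\xi|^2+\eta\right)^{-2}\le\left(1+|\xi|^2\right)^{-2}$ reduces matters to the case $\eta=0$, and then the dyadic split $\R^d=\{|\xi|\le 1\}\cup\{|\xi|>1\}$ gives the same two elementary bounds. There is no genuine obstacle here; the only point that actually uses the hypotheses $0<\alpha<1$ (and $d\ge 1$) is the double endpoint check — integrability at $\rho=0$ requires $\alpha>0$, integrability at $\rho=+\infty$ requires $\alpha<1$ — and the bound is uniform in $\eta\ge 0$ since every inequality used invokes only the monotonicity of $t\mapsto(1+t+\eta)^{-2}$.
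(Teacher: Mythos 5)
Your proof is correct and follows essentially the same route as the paper: reduction to the radial integral $\int_0^{+\infty}\rho^{2\alpha+1}(1+\eta+\rho^2)^{-2}\,d\rho$ via hyper-spherical coordinates and the identity \eqref{ProdJacobian}, then an integrability check at $\rho=0$ and $\rho=+\infty$ (the paper phrases this with asymptotic equivalences where you use explicit bounds, which is immaterial). One tiny remark: convergence at the origin holds for any $\alpha>-1$, so only the behaviour at infinity genuinely uses $\alpha<1$.
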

\begin{proof}
Using the same argument in Lemma \ref{Appendix1}, we get 
$$
S_1=\frac{d\pi^{\frac{d}{2}}}{\Gamma\left(\frac{d}{2}+1\right)}\int_0^{+\infty}\frac{\rho^{2\alpha+1}}{(1+\eta+\rho^2)^2}d\rho. 
$$
Since, we have 
\begin{equation*}
\frac{\rho^{2\alpha+1}}{(1+\rho^2+\eta)^2}\isEquivTo{0}\frac{\rho^{2\alpha+1}}{(1+\eta)^2}\quad \text{and}\quad \frac{\rho^{2\alpha+1}}{(1+\rho^2+\eta)^2}\isEquivTo{+\infty}\frac{1}{\rho^{3-2\alpha}}.
\end{equation*}
Using the fact that $\alpha\in (0,1)$, we get $S_1$ is well defined. 
\end{proof}

%%%%%%%%%%%%%%%%%%%%%%%%%%%%%%%%%%%%%%%%%%%%%%%%%%%%%%%%%%%%%%%%%%%%%%%%%%%%%%%%%%%%%%%%%%%%%%%%%%%%%%%%%%%%%%%%%%%%%%%%%%%%%%%%%%%%%%%%%%%%%%%%%%%%%%%%%%%%%%%%%%%%%%%%%%%%%%%%%%%%%%%%%%%%%%%%%%%%%%%%%%%%%%%%%%%%%%%%%%%%%%%%%%%%%%%%%%%%%%%%%%%%%%%%%%%%%%%%%%%%%%%%%%%%%%%%%%%%%%%%%%%%%%%%%%%%%%%%%%%%%%%%%%%%%%%%%%%%%%%%%%%%%%%%%%%%%%%%%%%%%%%%%%
\begin{lemma}\label{Appendix2}
Let $\eta >0$ and $d\geq 2$, then we have 
\begin{equation*}
\left\{
\begin{array}{ll}

\displaystyle{
B_1:=\displaystyle{\int_{\R^d}\frac{\abs{\xi}^{\alpha+\frac{d}{2}}}{\left(\abs{\la}+|\xi|^2+\eta\right)^{d+1}}d\xi}=c_1(\alpha,d)\left(\abs{\la}+\eta\right)^{\frac{\alpha}{2}-\frac{d}{4}-1},
}

\nline
\displaystyle{
A_2:=\int_{\R^d}\frac{\abs{\xi}^{2d-2}}{\left(\abs{\la}+\abs{\xi}^2+\eta\right)^{2d}d\xi}=c_2(d)\left(\abs{\la}+\eta\right)^{-1-\frac{d}{2}},
}
\nline
\displaystyle{
A_3:=\int_{\R^d}\frac{\abs{\xi}^{2d}}{\left(\abs{\la}+\abs{\xi}^2+\eta\right)^{2d+2}}d\xi=c_3(d)\left(\abs{\la}+\eta\right)^{-\frac{d}{2}-2}
}.
\end{array}
\right.
\end{equation*}
such that
\begin{equation}\label{ctildec}
c_1(\alpha,d)=\frac{d\pi^{\frac{d}{2}}\Gamma\left(\frac{d}{4}-\frac{\alpha}{2}+1\right)\Gamma\left(\frac{\alpha}{2}+\frac{3d}{4}\right)}{2\Gamma\left(\frac{d}{2}+1\right)\Gamma\left(d+1\right)},\ \ 
c_2(d)=\frac{d\pi^{\frac{d}{2}}\Gamma\left(\frac{d}{2}\right)\Gamma\left(\frac{3d}{2}\right)}{2\Gamma\left(\frac{d}{2}+1\right)\Gamma\left(2d\right)},\quad 
c_3(d)=\frac{d\pi^{\frac{d}{2}}\Gamma\left(2+\frac{d}{2}\right)\Gamma\left(\frac{3d}{2}\right)}{2\Gamma\left(\frac{d}{2}+1\right)\Gamma\left(2+2d\right)}.
\end{equation}
\end{lemma}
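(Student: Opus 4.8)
The plan is to evaluate all three integrals by the same route used in the proof of Lemma \ref{Appendix1}: pass to hyper-spherical coordinates, reduce to a one-dimensional radial integral, and then recognise an Eulerian (Beta) integral. Write each of $B_1$, $A_2$, $A_3$ in the generic form $\displaystyle\int_{\R^d}\frac{\abs{\xi}^{p}}{(\abs{\la}+\abs{\xi}^2+\eta)^{q}}\,d\xi$ with $(p,q)$ equal to $\left(\alpha+\tfrac d2,\,d+1\right)$, $\left(2d-2,\,2d\right)$ and $\left(2d,\,2d+2\right)$ respectively. Since $\eta>0$ we have $c:=\abs{\la}+\eta>0$, so the integrand is bounded near the origin (the exponents $p$ are all $\geq 0$ for $d\geq 2$) and the only convergence condition is at infinity, namely $2q-p-d>0$; one checks this equals $\tfrac d2+2-\alpha$, $d+2$ and $d+4$ in the three cases, all positive because $0<\alpha<1$.

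Next I would use the Jacobian \eqref{jacobian} and the angular identity \eqref{ProdJacobian} to get
\[
\int_{\R^d}\frac{\abs{\xi}^{p}}{(c+\abs{\xi}^2)^{q}}\,d\xi=\frac{d\pi^{d/2}}{\Gamma\!\left(\tfrac d2+1\right)}\int_0^{+\infty}\frac{\rho^{p+d-1}}{(c+\rho^2)^{q}}\,d\rho,
\]
and then perform the scaling $\rho=\sqrt{c}\,s$. This pulls every power of $c$ out of the radial integral, producing the factor $(\abs{\la}+\eta)^{\frac{p+d}{2}-q}$; a direct computation shows $\tfrac{p+d}{2}-q$ equals $\tfrac\alpha2-\tfrac d4-1$, $-1-\tfrac d2$ and $-\tfrac d2-2$ in the three cases, matching the claimed powers. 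The leftover dimensionless integral $\int_0^{+\infty}s^{p+d-1}(1+s^2)^{-q}\,ds$ is turned, via $t=s^2$, into $\tfrac12\,B\!\left(\tfrac{p+d}{2},\,q-\tfrac{p+d}{2}\right)=\tfrac12\,\dfrac{\Gamma\!\left(\tfrac{p+d}{2}\right)\Gamma\!\left(q-\tfrac{p+d}{2}\right)}{\Gamma(q)}$. Substituting the three pairs $(p,q)$ and simplifying with $\Gamma\!\left(\tfrac d2+1\right)=\tfrac d2\,\Gamma\!\left(\tfrac d2\right)$ yields the constants $c_1(\alpha,d)$, $c_2(d)$, $c_3(d)$ of \eqref{ctildec}.

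I do not expect any conceptual obstacle here; the argument is a routine reduction to Euler's Beta function, identical in spirit to Lemma \ref{Appendix1}. The only place that needs care is the bookkeeping of the half-integer arguments of the Gamma functions — tracking $\tfrac{p+d}{2}$ and $q-\tfrac{p+d}{2}$ and checking they coincide with the arguments appearing in $c_1,c_2,c_3$ — together with the (easy) verification that each convergence inequality $2q-p-d>0$ holds for $0<\alpha<1$ and $d\geq2$.
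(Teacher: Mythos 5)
Your proof is correct and follows essentially the same route as the paper's: hyper-spherical coordinates via \eqref{jacobian}--\eqref{ProdJacobian}, reduction to a one-dimensional radial integral, and evaluation as an Euler Beta integral, the only cosmetic differences being that you scale out $(\abs{\la}+\eta)$ first and substitute $t=s^2$ whereas the paper chains the substitutions $x=\rho^2$, $y=x/(\abs{\la}+\eta)+1$, $z=1/y$, and that you treat all three integrals uniformly through the pair $(p,q)$ where the paper computes $B_1$ explicitly and says ``by the same way'' for $A_2$, $A_3$. One bookkeeping caveat: for $A_2$ your Beta identity actually yields the constant $\frac{d\pi^{d/2}}{2\Gamma(\frac{d}{2}+1)}\cdot\frac{\Gamma(\frac{3d}{2}-1)\Gamma(\frac{d}{2}+1)}{\Gamma(2d)}$, which differs from the stated $c_2(d)$ in \eqref{ctildec} by the factor $d/(3d-2)$ --- this appears to be a typo in the paper rather than a flaw in your argument, and is immaterial since only the exponent of $(\abs{\la}+\eta)$ is used later.
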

\begin{proof}
Our first aim is to calculate $B_1$. Using the hyper-spherical coordinates and the fact that the Jacobian $J$  defined in equation \eqref{jacobian}, we get 
\begin{equation}\label{calculA1}
B_1=\int_0^{+\infty}\frac{\rho^{\alpha+3\frac{d}{2}-1}}{\left(|\la|+\eta+\rho^2\right)^{d+1}}\left(\prod_{j=1}^{d-2}\left(\int_0^{\pi}\sin^{d-1-j}(\phi_j)d\phi_j\right)\int_0^{2\pi}d\phi_{d-1}\right)d\rho.
\end{equation}
Using equation \eqref{ProdJacobian} in equation \eqref{calculA1}, we obtain 
$$
B_1=\frac{d\pi^{\frac{d}{2}}}{2\Gamma\left(\frac{d}{2}+1\right)}\int_0^{+\infty}\frac{\rho^{\alpha+3\frac{d}{2}-1}}{\left(|\la|+\eta+\rho^2\right)^{d+1}}d\rho.
$$
A direct computation gives 
$$
\begin{array}{ccl}
B_1&=&\displaystyle
\frac{d\pi^{\frac{d}{2}}}{2\Gamma\left(\frac{d}{2}+1\right)}\int_0^{+\infty}\frac{x^{\frac{\alpha}{2}+3\frac{d}{4}-1}}{\left(\abs{\la}+\eta+x\right)^{d+1}}dx\\ [0.1in]
&=&\displaystyle
\frac{d\pi^{\frac{d}{2}}}{2\Gamma\left(\frac{d}{2}+1\right)}\left(\abs{\la}+\eta\right)^{\frac{\alpha}{2}-\frac{d}{4}-1}\int_{1}^{+\infty}\frac{(y-1)^{\frac{\alpha}{2}+3\frac{d}{4}-1}}{y^{d+1}}dy\\ [0.1in]
&=&\displaystyle
\frac{d\pi^{\frac{d}{2}}}{2\Gamma\left(\frac{d}{2}+1\right)}\left(\abs{\la}+\eta\right)^{\frac{\alpha}{2}-\frac{d}{4}-1}\int_0^1(1-z)^{\frac{\alpha}{2}+3\frac{d}{4}-1}z^{\frac{d}{4}-\frac{\alpha}{2}}dz\\ [0.1in]
&=&\displaystyle
\frac{d\pi^{\frac{d}{2}}\Gamma\left(\frac{d}{4}-\frac{\alpha}{2}+1\right)\Gamma\left(\frac{\alpha}{2}+3\frac{d}{4}\right)}{2\Gamma\left(\frac{d}{2}+1\right)\Gamma\left(d+1\right)}\left(\abs{\la}+\eta\right)^{\frac{\alpha}{2}-\frac{d}{4}-1}
\end{array}
$$
By the same way, we can calculate  $A_2$ and $A_3$.

\begin{comment}
Consequently, using the same calculation used in Lemma \ref{Appendix1}, we obtain  
$$
A_1=c\left(|\lambda|+\eta\right)^{\alpha-1}
$$
	where $c$ is defined in \eqref{ctildec}.\\

Secondly, we calculate $A_3$. Using the same calculation of $A_1$, we obtain 
	\begin{equation}\label{3app2}
	A_3^2=\frac{d\pi^{\frac{d}{2}}}{\Gamma\left(\frac{d}{2}+1\right)}\int_0^{+\infty}\frac{\rho^{2\alpha-1}}{\left(|\la|+\eta+\rho^2\right)}d\rho.
	\end{equation}
	Let $x=\rho^2$ in \eqref{3app2}, we get 
	\begin{equation}\label{3app3}
	A_3^2=\frac{d\pi^{\frac{d}{2}}}{2\Gamma\left(\frac{d}{2}+1\right)}\int_0^{+\infty}\frac{x^{\alpha-1}}{\left(|\la|+\eta+x\right)^2}dx.
	\end{equation}
By tacking $y=\frac{x}{|\lambda|+\eta}+1$ in \eqref{3app3}, we deduce that 
	$$
	A_3^2=\frac{d\pi^{\frac{d}{2}}\left(|\lambda|+\eta\right)^{\alpha-2}}{2\Gamma\left(\frac{d}{2}+1\right)}I_y
	$$ 
	where $I_y$ is defined by 
	$$
	I_y=\int_1^{+\infty}\frac{(y-1)^{\alpha-1}}{y^2}dy.
	$$
In addition, it is easy to check that for $\alpha\in ]0,1[$, the integral $I_y$ is well defined. Finally, from equation \eqref{3app3}, we obtain
	$$
	A_3=\tilde{c}\left(|\la|+\eta\right)^{\frac{\alpha}{2}-1}
	$$
	where $\tilde{c}$ is defined in equation \eqref{ctildec}
	\end{comment}
\end{proof}


\begin{thebibliography}{10}

\bibitem{benaissa:17}
Z.~Achouri, N.-E. Amroun, and A.~Benaissa.
\newblock The euler-bernoulli beam equatin with boundary dissipation of
  fractional derivative type.
\newblock {\em Mathematical Methods in the Applied Sciences}, 2016.

\bibitem{akilwehbe02}
M.~Akil, M.~Ghader, and A.~Wehbe.
\newblock The influence of the coefficients of a system of wave equations
  coupled by velocities on its stabilization.
\newblock {\em {SeMA} Journal}, Nov. 2020.

\bibitem{akilwehbe01}
M.~Akil and A.~Wehbe.
\newblock Stabilization of multidimensional wave equation with locally boundary
  fractional dissipation law under geometric conditions.
\newblock {\em Mathematical Control and Related Fields}, 8(0):1--20, 2018.

\bibitem{Boussouira01}
F.~Alabau-Boussouira.
\newblock Indirect boundary stabilization of weakly coupled hyperbolic systems.
\newblock {\em SIAM J. Control Optim.}, 41:511--541, 2002.

\bibitem{Boussouira03}
F.~Alabau-Boussouira.
\newblock A two-level energy method for indirect boundary observability and
  controllability of weakly coupled hyperbolic systems.
\newblock {\em SIAM J. Control Optim}, 42:871–906, 2003.

\bibitem{Boussouira02}
F.~Alabau-Boussouira, P.~Cannarsa, and V.~Komornik.
\newblock Indirect internal stabilization of weakly coupled evolution
  equations,.
\newblock {\em J. Evol. Equ.}, 2:127--150, 2002.

\bibitem{KodjaBader01}
F.~Ammar-Khodja and A.~Bader.
\newblock Stability of systems of one dimensional wave equations by internal or
  boundary control force.
\newblock {\em SIAM J. Control Optim.}, 39:1833--1851, 2001.

\bibitem{AmmariMehre01}
K.~Ammari and M.~Mehrenberger.
\newblock Stabilization of coupled systems.
\newblock {\em Acta Math. Hungar.}, 123:1--10, 2009.

\bibitem{arendt:88}
W.~Arendt and C.~J.~K. Batty.
\newblock Tauberian theorems and stability of one-parameter semigroups.
\newblock {\em Trans. Amer. Math. Soc.}, 306(2):837--852, 1988.

\bibitem{bagleytorvik2:83}
R.~Bagley and P.~Torvik.
\newblock A different approach to the analysis of viscoelasticity damped
  structures.
\newblock {\em AIAA J.}, 21:741--748, 1983.

\bibitem{bagleytorvik3:83}
R.~Bagley and P.~Torvik.
\newblock On the appearance of the fractional derivative in the behavior of
  real material.
\newblock {\em J. Appl. Mechn.}, 51:294--298, 1983.

\bibitem{bagleytorvik1:83}
R.~Bagley and P.~Torvik.
\newblock A theoretical basis for the application of fractional calculus to
  viscoelasticity.
\newblock {\em J. Rheology}, 27(201-210), 1983.

\bibitem{batkai:06}
A.~B\'atkai, K.~J. Engel, J.~Pr\"uss, and R.~Shnaubelt.
\newblock Polynomial stability of operator semigroup.
\newblock {\em Math. Nashr.}, 279(13-14):1425--1440, 2006.

\bibitem{batty:08}
C.~J.~K. Batty and T.~Duyckaerts.
\newblock Non uniform stability for bounded semi-groups on banach spaces.
\newblock {\em J. Evol. Equ}, 8(4):765--780, 2008.

\bibitem{benchimol:78}
C.~D. Benchimol.
\newblock A note on weak stabilizability of contraction semigroups.
\newblock {\em SIAM}, 16(3):373--379, 1978.

\bibitem{borichev:10}
A.~Borichev and Y.~Tomilov.
\newblock Optimal polynomial decay of functions and operator semigroups.
\newblock {\em Math. Ann.}, 347(2):455--478, 2010.

\bibitem{Caputo:76}
M.~Caputo.
\newblock Vibrations of an infinite plate with a frequency independant.
\newblock {\em Q.J. Acoust.Soc. Am}, 60:634--639, 1976.

\bibitem{zhda:14}
H.~Dai and H.~Zhang.
\newblock Exponential growth for wave equation with fractional boundary
  dissipation and boundary source term.
\newblock {\em Boundary Value Problems}, 1:1--8, 2014.

\bibitem{Lasiescka02}
I.~Lasiecka.
\newblock Mathematical control theory of coupled pde’s.
\newblock {\em CBMS-NSF Lecture Notes, SIAM,}, 2, 2002.

\bibitem{Lebau01}
G.~Lebeau.
\newblock Equation des ondes amorties,.
\newblock {\em Math. Phys. Stud.}, 19:73--109, 1996.

\bibitem{Lebau02}
G.~Lebeau and L.~Robbiano.
\newblock Stabilisation de l'\'equation des ondes par le bord.
\newblock {\em Duke Math. J.}, 861:465--491, 1997.

\bibitem{liurao:05}
Z.~Liu and B.~Rao.
\newblock Characterization of polynomial decay rate for the solution of linear
  evolution equation.
\newblock {\em Z. angew. Math. Phys.}, 56:630--644, 2005.

\bibitem{LiuRao01}
Z.~Liu and B.~Rao.
\newblock Frequency domain approach for the polynomial stability of a system of
  partially damped wave equations.
\newblock {\em J. Math. Anal. Appl.}, 335:860--881, 2007.

\bibitem{liu:99}
Z.~Liu and S.~Zheng.
\newblock {\em Semigroups associated with dissipative systems}, volume 398 of
  {\em Chapman and Hall/CRC Research Notes in Mathematics}.
\newblock Chapman \& Hall/CRC, Boca Raton, FL, 1999.

\bibitem{Loreti-Rao:06}
P.~Loreti and B.~Rao.
\newblock Optimal energy decay rate for partially damped systems by spectral
  compensation.
\newblock {\em SIAM J. Control Optim.}, 45(5):1612--1632 (electronic), 2006.

\bibitem{mainardibonetti}
F.~Mainardi and E.~Bonetti.
\newblock The applications of real order derivatives in linear
  viscoelasticityl.
\newblock {\em Rheol. Acta}, 26:64--67, 1988.

\bibitem{Mbodje:06}
B.~Mbodje.
\newblock Wave energy decay under fractional derivative controls.
\newblock {\em IMA Journal of Mathematical Control and Information},
  23:237--257, 2006.

\bibitem{mbomon:95}
B.~Mbodje and G.~Montseny.
\newblock Boundary fractional derivative control of the wave equation.
\newblock {\em IEEE Transactions on Automatic Control.}, 40:368--382, 1995.

\bibitem{pazy}
A.~Pazy.
\newblock {\em Semigroups of linear operators and applications to partial
  differential equations}, volume~44 of {\em Applied Math. Sciences}.
\newblock Springer-Verlag, New York, 1983.

\bibitem{Russell01}
D.~Russell.
\newblock A general framework for the study of indirect damping mechanisms in
  elastic systems,.
\newblock {\em J. Math. Anal. Appl.}, 173:339--354, 1993.

\bibitem{Slemrod:89}
M.~Slemrod.
\newblock Feedbacks stabilization of a linear system in a hilbert space with an
  a priori bounded control.
\newblock {\em Math. Control Signals Systems,}, 2:265--285, 1989.

\bibitem{ZuazuazZhang01}
X.~Zhang and E.~Zuazua.
\newblock Polynomial decay and control of a 1-d hyperbolic–parabolic coupled
  system,.
\newblock {\em J. Differential Equations}, 204:380--438, 2004.

\end{thebibliography}
\end{document}